\def\bbr{{\Bbb{R}}} 
\def\bbe{{\Bbb{E}}} 
\def\bbp{{\Bbb{P}}}
\def\bbd{{\Bbb{D}}}
\def\bbh{{\Bbb{H}}}
\newcommand{\N}{{\cal N}}
\newcommand{\T}{{\cal T}}
\newcommand{\R}{{\cal R}}
\newcommand{\D}{{\cal D}}
\newcommand{\U}{{\cal U}}
\newcommand{\V}{{\cal V}}
\newcommand{\F}{{\cal F}}
\newcommand{\C}{{\cal C}}
\newcommand{\B}{{\cal B}}
\newcommand{\X}{{\cal X}}
\newcommand{\Y}{{\cal Y}}
\newcommand{\s}{\mathcal{S}}
\def\w{\omega}
\newcommand{\rr}{\rightrightarrows}
\def\e{\varepsilon}
\newcommand{\cd}{{\mathfrak d}}
\newcommand{\crr}{{\mathfrak r}}
\newcommand{\cF}{{\mathfrak F}}
\newtheorem{assu}{Assumption}[section]
\newtheorem{example}{Example}[section]
\newtheorem{cor}{Corollary}[section]
\newtheorem{remark}{Remark}[section]
\newtheorem{alg}{Algorithm}[section]
\newcommand{\inmat}[1]{\mbox{\rm {#1}}}
\def\prob {{\sf Pr}}
\numberwithin{theorem}{section}
\newcommand{\TheTitle}{Convergence Analysis of Sample Average Approximation of 
Two-stage Stochastic Generalized Equations} 
\newcommand{\TheAuthors}{X. Chen, A. Shapiro and H. Sun}
\title{{\TheTitle}\thanks{Submitted to the editors DATE.
}}
\author{
  Xiaojun Chen\thanks{Department of Applied Mathematics, The Hong Kong Polytechnic University,
    (\email{xiaojun.chen@polyu.edu.hk}). Research of this author  was partly supported by Hong Kong Research Grant Council PolyU153016/16p.}
  \and
  Alexander Shapiro\thanks{School of Industrial and Systems Engineering, Georgia Institute of Technology, (\email{alex.shapiro@isye.gatech.edu}). Research of this author  was partly supported by  NSF grant 1633196 and  DARPA EQUiPS program, grant SNL 014150709.}
  \and
  Hailin Sun\thanks{School of Economics and Management, Nanjing University of Science and Technology;  Department of Applied Mathematics, The Hong Kong Polytechnic University, (\email{hlsun@njust.edu.cn}).  Research  of  this  author  was  partly  supported by  National Natural Science Foundation of China  11401308, 11571178. }
}
\begin{document}

\maketitle

\begin{abstract}
  A solution of two-stage stochastic generalized equations is a pair: a first stage solution which is  independent of realization of the random data and a second stage solution which is a function of random variables.
 This paper studies   convergence of the sample average approximation
of   two-stage stochastic nonlinear  generalized equations. In particular an exponential rate of the convergence is shown   by using the perturbed partial linearization of functions. Moreover,  sufficient conditions for the existence, uniqueness, continuity and regularity of solutions of two-stage stochastic generalized equations are presented  under an  assumption of monotonicity of the involved  functions. These theoretical results are given without assuming relatively  complete recourse,
 and are  illustrated by two-stage stochastic non-cooperative games of two players.
\end{abstract}

\begin{keywords}
  Two-stage stochastic generalized equations, sample average approximation,
 convergence, exponential rate, monotone multifunctions
\end{keywords}

\begin{AMS}
  90C15, 90C33
\end{AMS}

\section{Introduction}
Consider the following two-stage Stochastic Generalized\linebreak
 Equations (SGE)
\begin{eqnarray}
\label{geq-1}
&&0\in \bbe[\Phi(x, y(\xi), \xi)]+ \Gamma_1(x),\;x\in X, \\
\label{geq-2}
&&0\in\Psi(x, y(\xi), \xi)+\Gamma_2(y(\xi), \xi), \;\;  \mbox{ for a.e.}\; \xi\in\Xi.
\end{eqnarray}
Here $X\subseteq \bbr^n$ is a nonempty closed convex set,  $\xi:\Omega\to\bbr^d$ is a random vector  defined on a probability space $(\Omega,\F,\bbp)$,   whose probability distribution $P=\bbp\circ \xi^{-1}$ is supported on  set $\Xi:=\xi(\Omega)\subseteq \bbr^d$, $\Phi:\bbr^n\times\bbr^m\times \bbr^d\to \bbr^n$ and
$\Psi:\bbr^n\times\bbr^m\times \bbr^d\to \bbr^m$,
and $\Gamma_1:\bbr^n\rr\bbr^n$, $\Gamma_2:\bbr^m\times \Xi\rr\bbr^m$ are multifunctions (point-to-set mappings). We assume throughout the paper that $\Phi(\cdot,\cdot,\xi)$ and $\Psi(\cdot,\cdot,\xi)$ are  {\em  Lipschitz continuous} with Lipschitz modules $\kappa_{\Phi}(\xi)$ and $\kappa_{\Psi}(\xi)$,   and $y(\cdot) \in {\cal Y}$ with ${\cal Y}$ being  the space of measurable functions from $\Xi$ to $\bbr^m$ such that the expected value in \eqref{geq-1} is well defined.

Solutions of \eqref{geq-1}--\eqref{geq-2} are searched over  $x\in X$ and $y(\cdot)\in {\cal Y}$ to
satisfy the corresponding inclusions, where the second stage inclusion \eqref{geq-2} should hold for almost every (a.e.) realization of $\xi$.
The first stage decision $x$ is made before observing realization of the random data vector $\xi$ and the second stage decision $y(\xi)$ is a function of $\xi$.

When the multifunctions $\Gamma_1$ and $\Gamma_2$ have the following form
$$\Gamma_1(x):=\N_C(x)\;\;\;{\rm and}\;\;\;\Gamma_2(y, \xi):=\N_{K(\xi)}(y),$$
where  $\N_C(x)$ is  the  normal cone to a nonempty closed convex set $C\subseteq \bbr^n$ at $x$, and similarly  for $\N_{K(\xi)}(y)$,
  the SGE \eqref{geq-1}--\eqref{geq-2} reduce to the two-stage Stochastic Variational Inequalities (SVI) as  in \cite{CPW,RW2017}. The two-stage SVI represent  first order optimality conditions for the two-stage stochastic optimization problem \cite{Birge,SDR} and model several equilibrium problems in stochastic environment \cite{CPW,CWZ}.
Moreover, if the sets $C$ and $K(\xi)$, $\xi\in \Xi$, are closed convex {\em cones}, then
\[
\N_C(x)=\{x^*\in  C^*:  x^\top x^* =0\},\;x\in C,
\]
where $C^*=\{x^*:  x^\top x^* \le 0,\;\forall x\in C\}$ is the (negative)   dual of cone $C$. 
In that case   the SGE \eqref{geq-1}--\eqref{geq-2} reduce to the following  two-stage stochastic cone VI
\begin{eqnarray*}
&&C\ni x\perp \bbe[\Phi(x, y(\xi), \xi)]\in - C^*,\;x\in X, \\
&&K(\xi)\ni y(\xi)\perp \Psi(x, y(\xi), \xi)\in - K^*(\xi), \;\;  \mbox{ for a.e.}\; \xi\in\Xi.
\end{eqnarray*}
In particular when $C:=\bbr^n_+$ with $C^*=-\bbr^n_+$, and $K(\xi):= \bbr^m_+$ with $K^*(\xi)=-\bbr^m_+$ for all $\xi\in\Xi$, the SGE \eqref{geq-1}--\eqref{geq-2} reduce  to the two-stage Stochastic Nonlinear Complementarity Problem (SNCP):
\begin{eqnarray*}
&&0\le  x\perp  \bbe[\Phi(x, y(\xi), \xi)] \ge 0, \\
&&0\le  y(\xi)\perp  \Psi(x, y(\xi), \xi) \ge 0, \,\,  \mbox{ for a.e.}\; \xi\in\Xi,
\end{eqnarray*}
which is a generalization of the two-stage Stochastic Linear Complementarity Problem (SLCP):
\begin{eqnarray}
\label{geq-vi-31}
&&0\le x\perp Ax + \bbe[B(\xi)y(\xi)] +q_1 \ge 0, \\
\label{geq-vi-41}
&&0\le y(\xi)\perp L(\xi)x +M(\xi)y(\xi) +q_2(\xi) \ge 0, \,\,  \mbox{ for a.e.}\; \xi\in\Xi,
\end{eqnarray}
where $A\in \bbr^{n\times n}$, $B: \Xi \to \bbr^{n\times m},$ $L: \Xi \to \bbr^{m\times n},$
$M: \Xi \to \bbr^{m\times m},$ $q_1\in \bbr^n, q_2: \Xi \to \bbr^{m}.$ The two-stage SLCP arises from
the KKT condition for the two-stage stochastic linear programmming \cite{CPW}. Existence of solutions of (\ref{geq-vi-31})-(\ref{geq-vi-41}) has been studied in \cite{ChSuXu17}. Moreover, the progressive hedging method has been applied to solve (\ref{geq-vi-31})-(\ref{geq-vi-41}), with a finite number of realizations of $\xi$, in \cite{RS}.

Most existing studies for two-stage stochastic problems assume {\em relatively complete recourse}, that is, for every $x\in X$ and a.e. $\xi\in \Xi$ the
second stage problem has at least one solution. However, for the SGE \eqref{geq-1}--\eqref{geq-2},
it could happen that for a certain  first stage decision $x\in X$, the second stage generalized equation
\begin{equation}\label{eq:geq-2xi}
0 \in \Psi(x, y, \xi)+\Gamma_2(y, \xi)
\end{equation}
 does not have a solution for some $\xi\in \Xi$. For such $x$ and  $\xi$ the second stage decision $y(\xi)$ is not defined.  If this happens for $\xi$ with   positive probability, then the expected value of the first stage problem is not defined and such $x$ should be avoided.

In this paper, without assuming {\em relatively complete recourse}, we study convergence of the Sample Average Approximation (SAA)
\begin{eqnarray}
\label{geq-7}
&&0\in N^{-1}\sum_{j=1}^N \Phi(x, y_j, \xi^j) + \Gamma_1(x),\;x\in X, \\
\label{geq-8}
&&0\in\Psi(x, y_j, \xi^j)+\Gamma_2(y_j, \xi^j), \;\;  j=1,...,N,
\end{eqnarray}
of the two-stage SGE \eqref{geq-1}--\eqref{geq-2} with $y_j$ being a copy of the second stage vector for $\xi=\xi^j$, $j=1,...,N$, where $\xi^1,...,\xi^N$ is an independent identically distributed (iid)  sample of random vector $\xi$.
The  paper is organized as follows. In section \ref{sec-theor} we investigate almost sure and  exponential rate of   convergence of solutions of the  sample average approximations of the two-stage SGE. In section \ref{sec-svi}  convergence analysis of  the mixed two-stage SVI-NCP is presented. In particular we give   sufficient conditions for the existence, uniqueness, continuity and regularity of solutions   by using the perturbed linearization of functions $\Phi$ and $\Psi$.  Theoretical results,   given in sections \ref{sec-theor} and \ref{sec-svi},  are
illustrated by numerical examples, using the  Progressive Hedging Method (PHM), in section \ref{sec-examp}. It is worth noting that  PHM is well-defined for two-stage monotone SVI without relatively complete recourse.
Finally   section \ref{sec-concl} is devoted to   conclusion remarks.


We use the following notation and terminology  throughout the paper.
 Unless stated otherwise $\|x\|$ denotes the Euclidean norm of vector $x\in \bbr^n$. By $\B:=\{x:\|x\|\le 1\}$ we denote unit ball in a considered  vector space.
For two sets $A,B\subset \bbr^m$  we  denote by $d(x,B):=\inf_{y\in B}\|x-y\|$ distance from a point $x\in \bbr^m$ to the set $B$, $d(x,B)=+\infty$ if $B$ is empty,    by $\bbd(A,B):=\sup_{x\in A}d(x,B)$ the deviation of set $A$ from the set $B$, and
$\bbh(A,B):=\max\{\bbd(A,B),\bbd(B,A)\}$. The indicator function of a set $A$ is defined as $I_A(x)=0$ for $x\in A$ and $I_A(x)=+\infty$ for $x\not\in A$. By ${\rm bd}(A)$, {\rm int}(A) and ${\rm cl}(A)$
we denote the boundary, interior  and topological closure of a set  $A\subset \bbr^m$. By  ${\rm reint}(A)$ we  denote  the  relative interior of a convex  set $A\subset \bbr^m$.
A  multifunction (point-to-set mappings)
$\Gamma:\bbr^n\rr\bbr^m$ assigns to a point $x\in \bbr^n$ a set $\Gamma(x)\subset \bbr^m$.
A multifunction $\Gamma:\bbr^n\rr\bbr^m$ is said to be {\em closed} if $x_k\to x$, $x_k^*\in \Gamma (x_k)$ and $x_k^*\to x^*$, then $x^*\in \Gamma (x)$. It is said that a multifunction  $\Gamma:\bbr^n\rr\bbr^n$ is {\em  monotone}, if
$
(x-x')^\top (y-y') \ge 0,
$
 for all $x,x'\in \bbr^n$, and  $y\in \Gamma(x)$, $y'\in \Gamma(x')$. Note that for a nonempty closed  convex set $C$, the normal cone multifunction $\Gamma(x):= \N_C(x)$ is closed and monotone.
Note also that the normal cone   $\N_C(x)$, at $x\in C$, is the (negative) dual of the tangent cone $\T_C(x)$. We use the same notation for $\xi$ considered as a random vector and as a variable $\xi \in \bbr^d$. Which of these two meanings is  used will be clear from the context.

\section{Sample average approximation  of the two-stage SGE}
\label{sec-theor}
\setcounter{equation}{0}

In this section we discuss statistical properties of the first stage solution  $\hat{x}_N$ of the SAA problem \eqref{geq-7}--\eqref{geq-8}. In particular  we investigate conditions ensuring convergence of $\hat{x}_N$,   with probability one (w.p.1) and exponential, to its counterpart of the true problem \eqref{geq-1}--\eqref{geq-2}.

Denote by $\X$ the set of $x\in X$ such that the second stage generalized  equation \eqref{eq:geq-2xi} has a solution for a.e. $\xi\in \Xi$.
The condition of relatively complete recourse means that $\X=X$.
Note   that $\X$ is a subset of $X$, and  if $(\bar{x},\bar{y}(\cdot))$ is a solution of \eqref{geq-1}--\eqref{geq-2}, then $\bar{x}\in \X$.
It is possible to formulate the two-stage SGE \eqref{geq-1}--\eqref{geq-2} in the following equivalent way.  Let $\hat{y}(x,\xi)$ be a solution function of the second stage problem \eqref{eq:geq-2xi} for $x\in \X$ and $\xi\in \Xi$, i.e.,
\[
0\in\Psi(x, \hat{y}(x,\xi), \xi)+\Gamma_2(\hat{y}(x,\xi),\xi), \;\;x\in \X,\;  \mbox{a.e.}\; \xi\in\Xi.
\]
 Then the first stage problem becomes
\begin{equation}\label{geq-3}
0\in\bbe[\Phi(x, \hat{y}(x,\xi), \xi)]+ \Gamma_1(x),\;x\in \X.
\end{equation}
If $\bar{x}$ is a solution of \eqref{geq-3}, then $(\bar{x},\hat{y}(\bar{x},\cdot))$ is a solution of \eqref{geq-1}--\eqref{geq-2}.
 Conversely if $(\bar{x},\bar{y}(\cdot))$ is a solution of \eqref{geq-1}--\eqref{geq-2}, then
 $\bar{x}$ is a solution of \eqref{geq-3}. Note that problem \eqref{geq-3} is a generalized equation which has been studied in the past decades, e.g. \cite{LRX14,Ro80,RWVA,sha2003}.

 It could happen that the second stage problem \eqref{eq:geq-2xi} has more than one   solution for some $x\in \X$.
In that case choice of $\hat{y}(x,\xi)$ is somewhat arbitrary. This motivates the following condition.
\begin{assu}
\label{ass-1}
For every   $(x,\xi)\in \X\times \Xi$, problem \eqref{eq:geq-2xi}  has a {\rm unique} solution.
\end{assu}
Under Assumption \ref{ass-1} the value $\hat{y}(x,\xi)$ is uniquely defined for all $x\in \X$ and   $\xi\in \Xi$, and the first stage problem \eqref{geq-3} can be written as the following generalized equation
\begin{equation}\label{geq-3a}
 0\in \phi(x)+ \Gamma_1(x),\;x\in \X,
\end{equation}
where
\begin{equation}\label{geq-3b}
\hat{\Phi}(x,\xi):= \Phi(x,\hat{y}(x,\xi),\xi) \,\;{\rm and}\;\, \phi(x):=\bbe[\hat{\Phi}(x,\xi)].
\end{equation}

If the SGE have relatively complete recourse, then under
  Assumption \ref{ass-1}
the   SAA problem \eqref{geq-7}--\eqref{geq-8} can be written as
\begin{equation}\label{saa-1}
 0\in\hat{\phi}_N(x)+ \Gamma_1(x),\;x\in X,
\end{equation}
where $\hat{\phi}_N(x):=N^{-1}\sum_{j=1}^N \hat{\Phi}(x,\xi^j)$ with   $\hat{\Phi}(x,\xi)$ defined in   \eqref{geq-3b}.
Problem \eqref{saa-1} can be viewed as the SAA of the first stage problem \eqref{geq-3a}.  If $(\hat{x}_N,\hat{y}_{jN})$ is a solution of the SAA problem \eqref{geq-7}--\eqref{geq-8},
then $\hat{x}_N$ is a solution of \eqref{saa-1} and
$\hat{y}_{jN}=\hat{y}(\hat{x}_N,\xi^j)$, $j=1,...,N$. Note that
the   SAA problem \eqref{geq-7}--\eqref{geq-8} can be considered
without assuming  the relatively complete recours, although in that case  it could happen that $\hat{\phi}_N(x)$ is not defined for some $x\in X\setminus \X$ and   solution $\hat{x}_N$ of \eqref{geq-7} is not implementable at the second stage for some realizations of the random vector $\xi$. Our aim is the convergence analysis of the SAA problem   \eqref{geq-7}--\eqref{geq-8}   when sample size $N$ increases.

Denote by $\s^*$ the set of solutions of the first stage problem \eqref{geq-3a} and by $\hat{\s}_N$   the set of solutions of the SAA  problem \eqref{geq-7} (in case of relatively complete recourse, $\hat{\s}_N$ is the set of solutions of problem   \eqref{saa-1} as well).
\begin{itemize}
  \item [$\bullet$]
By $\bar{\X}(\xi)$ we  denote the set of  $x\in X$ such  that problem \eqref{eq:geq-2xi} has a solution, and by  $\bar{\X}_N:=\cap_{j=1}^N\bar{\X}(\xi^j)$ the set   of  $x$ such that  problems \eqref{geq-8} have a solution.
\end{itemize}
Note that the set  $\X$ is equal to the intersection of $\bar{\X}(\xi)$, a.e. $\xi\in \Xi$; i.e., $\X=\cap_{\xi\in \Xi\setminus \Upsilon}\bar{\X}(\xi)$ for some set $\Upsilon\subset \Xi$ such that $P(\Upsilon)=0$.
Note also that if the  two-stage SGE have relatively complete recourse, then $\bar{\X}(\xi)=X$ for a.e. $\xi\in \Xi$.

\subsection{Almost sure convergence}

Consider the solution $\hat{y}(x,\xi)$ of the second stage problem \eqref{eq:geq-2xi}. To ensure continuity   of $\hat{y}(x,\xi)$ in $x\in \X$ for $\xi\in \Xi$, in addition to Assumption \ref{ass-1}, we need  the following
boundedness condition.
\begin{assu}
\label{ass-2}
For every $\xi\in\Xi$ and  $x\in \bar{\X}(\xi)$ there is a neighborhood $\V$ of $x$ and a measurable function $v(\xi)$ such that $\|\hat{y}(x',\xi)\|\le v(\xi)$ for all $x'\in \V\cap \bar{\X}(\xi)$.
\end{assu}

\begin{lemma}
\label{lm-cont}
Suppose that  Assumptions {\rm \ref{ass-1} and \ref{ass-2}} hold, and   for every $\xi\in \Xi$ the multifunction $\Gamma_2(\cdot,\xi)$ is closed.   Then for every $\xi\in \Xi$ the solution $\hat{y}(x,\xi)$ is a continuous function of $x\in \X$.
 \end{lemma}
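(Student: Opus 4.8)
The plan is to prove sequential continuity. Fix $\xi\in\Xi$ and let $x_k\to x$ be any sequence with $x_k,x\in\X$; the goal is to show $\hat{y}(x_k,\xi)\to\hat{y}(x,\xi)$. First note that under Assumption \ref{ass-1} the value $\hat{y}(x',\xi)$ is well defined for every $x'\in\X$, so in particular $\X\subseteq\bar{\X}(\xi)$ and all the objects below make sense for the given $\xi$.

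The first step is to show that $\{\hat{y}(x_k,\xi)\}$ is bounded, and this is precisely where Assumption \ref{ass-2} enters: applied at the point $x\in\X\subseteq\bar{\X}(\xi)$, it furnishes a neighborhood $\V$ of $x$ and a finite number $v(\xi)$ with $\|\hat{y}(x',\xi)\|\le v(\xi)$ for all $x'\in\V\cap\bar{\X}(\xi)$; since $x_k\to x$ we have $x_k\in\V\cap\bar{\X}(\xi)$ for all $k$ large enough, whence the claimed boundedness. The second, and main, step is a closed-graph argument: take any subsequence of $\{\hat{y}(x_k,\xi)\}$ and, by boundedness, a further subsequence along which $\hat{y}(x_{k_\ell},\xi)\to y^*$. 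By definition of $\hat{y}$ we have $-\Psi(x_{k_\ell},\hat{y}(x_{k_\ell},\xi),\xi)\in\Gamma_2(\hat{y}(x_{k_\ell},\xi),\xi)$; since $\Psi(\cdot,\cdot,\xi)$ is (Lipschitz) continuous, the left-hand side converges to $-\Psi(x,y^*,\xi)$, so closedness of $\Gamma_2(\cdot,\xi)$ yields $-\Psi(x,y^*,\xi)\in\Gamma_2(y^*,\xi)$, i.e. $y^*$ solves \eqref{eq:geq-2xi} at $(x,\xi)$. By the uniqueness in Assumption \ref{ass-1}, $y^*=\hat{y}(x,\xi)$.

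Finally, since every subsequence of $\{\hat{y}(x_k,\xi)\}$ admits a further subsequence converging to the common limit $\hat{y}(x,\xi)$, the whole sequence converges to $\hat{y}(x,\xi)$; as $\xi\in\Xi$ and the sequence $x_k\to x$ in $\X$ were arbitrary, continuity of $\hat{y}(\cdot,\xi)$ on $\X$ follows. I do not anticipate a serious obstacle: the only point needing a little care is the bookkeeping between $\X$ and $\bar{\X}(\xi)$ (ensuring $\hat{y}(x_k,\xi)$ is defined and Assumption \ref{ass-2} applies for the fixed $\xi$), which is immediate from Assumption \ref{ass-1}; everything else is the standard compactness-plus-closedness passage to the limit, combined with the subsequence principle.
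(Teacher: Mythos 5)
Your proof is correct and follows essentially the same route as the paper's: boundedness of the solution sequence via Assumption \ref{ass-2}, passage to a convergent subsequence, continuity of $\Psi(\cdot,\cdot,\xi)$ plus closedness of $\Gamma_2(\cdot,\xi)$ to identify the limit as a solution, and uniqueness from Assumption \ref{ass-1} to conclude. The paper phrases this as a proof by contradiction while you use the direct subsequence principle, but the substance is identical.
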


\begin{proof}
The proof is quite standard. We argue by a contradiction. Suppose that for some $\bar{x}\in \X$ and $\xi\in \Xi$ the solution $\hat{y}(\cdot,\xi)$ is not continuous at $\bar{x}$. That is, there is a sequence $x_k\in \X$ converging to $\bar{x}\in \X$ such that $y_k:=\hat{y}(x_k,\xi)$ does not converge to $\bar{y}:=\hat{y}(\bar{x},\xi)$. Then by the boundedness assumption, by passing to a subsequence if necessary we can assume that $y_k$ converges to a point $y^*$ different from  $\bar{y}$. Consequently $0\in \Psi(x_k,y_k,\xi)+ \Gamma_2(y_k,\xi)$ and $\Psi(x_k,y_k,\xi)$ converges
 to $\Psi(\bar{x},y^*,\xi)$. Since  $\Gamma_2(\cdot,\xi)$ is closed, it follows that    $0\in\Psi(\bar{x},y^*,\xi)+ \Gamma_2(y^*,\xi)$.  Hence by the uniqueness assumption, $y^*=\bar{y}$  which gives the required contradiction.
\end{proof}

Suppose for the moment that in addition to the assumptions of Lemma \ref{lm-cont},  the SGE have relatively complete recourse.
We can apply then  general results  to verify consistency of the SAA estimates.
Consider function  $\hat{\Phi}(x,\xi)$ defined in  \eqref{geq-3b}.
By continuity of  $\Phi (\cdot,\cdot,\xi)$ and $\hat{y}(\cdot,\xi)$, we have that   $\hat{\Phi}(\cdot,\xi)$ is continuous on $X$. Assuming further that there is a compact set $X'\subseteq X$ such that $\mathcal{S}^*\subseteq X'$ and  $\|\hat{\Phi}(x,\xi)\|_{x\in X'}$  is dominated by an integrable function, we have that
the function $\phi(x)=\bbe[\hat{\Phi}(x,\xi)]$ is continuous on $X'$  and $\hat{\phi}_N(x)$ converges w.p.1 to $\phi(x)$
uniformly on   $X'$. Note that the boundedness condition of Lemma \ref{lm-cont} and continuity of $\Phi(\cdot,\cdot,\xi)$ imply that $\hat{\Phi}(\cdot,\xi)$ is bounded on $X'$.
Then   consistency of SAA solutions follows by \cite[Theorem 5.12]{SDR}.
We give below a more general result without the assumption of relatively complete recourse.

\begin{lemma}\label{l:closed}
Suppose that Assumptions {\rm \ref{ass-1} and \ref{ass-2}} hold.
Then for every  $\xi\in \Xi$ the set $\bar{\X}(\xi)$ is closed.
\end{lemma}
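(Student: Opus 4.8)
The plan is to argue by a direct sequential closedness argument, in the same spirit as the proof of Lemma \ref{lm-cont}. Fix $\xi\in\Xi$. Let $\{x_k\}\subset\bar{\X}(\xi)$ be a sequence converging to some point $\bar{x}\in X$; I want to show $\bar{x}\in\bar{\X}(\xi)$, i.e. that the second stage generalized equation \eqref{eq:geq-2xi} with this $\xi$ has a solution at $\bar{x}$. For each $k$, since $x_k\in\bar{\X}(\xi)$, there is a (unique, by Assumption \ref{ass-1}) solution $y_k:=\hat{y}(x_k,\xi)$ satisfying $0\in\Psi(x_k,y_k,\xi)+\Gamma_2(y_k,\xi)$.

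The key step is to extract a convergent subsequence of $\{y_k\}$. This is exactly where Assumption \ref{ass-2} is used: since $\bar{x}$ is the limit of the $x_k\in\bar{\X}(\xi)$, all but finitely many $x_k$ lie in the neighborhood $\V$ of $\bar{x}$ provided by Assumption \ref{ass-2}, and hence $\|y_k\|=\|\hat{y}(x_k,\xi)\|\le v(\xi)$ for all $k$ large enough. (Note $v(\xi)$ is a fixed finite number here, since $\xi$ is fixed.) Therefore $\{y_k\}$ is bounded, and passing to a subsequence we may assume $y_k\to y^*$ for some $y^*\in\bbr^m$. By Lipschitz continuity of $\Psi(\cdot,\cdot,\xi)$, $\Psi(x_k,y_k,\xi)\to\Psi(\bar{x},y^*,\xi)$. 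Combining $0\in\Psi(x_k,y_k,\xi)+\Gamma_2(y_k,\xi)$, the convergence just noted, and closedness of $\Gamma_2(\cdot,\xi)$ (which is part of the hypotheses carried over — indeed it is needed, so I would state it; if the intended statement omits it, it is implied by the standing assumptions on $\Gamma_2$ being a normal-cone-type map, but cleanest is to invoke it), we obtain $0\in\Psi(\bar{x},y^*,\xi)+\Gamma_2(y^*,\xi)$. Thus $y^*$ solves \eqref{eq:geq-2xi} at $\bar{x}$, so $\bar{x}\in\bar{\X}(\xi)$, proving $\bar{\X}(\xi)$ is closed.

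The main obstacle — really the only non-formal point — is justifying that the boundedness in Assumption \ref{ass-2} can be applied uniformly along the tail of the sequence: the assumption is stated pointwise (for every $x$ in $\bar{\X}(\xi)$ there is a neighborhood $\V$), and one must apply it at the candidate limit point $\bar{x}$, which a priori is only known to lie in $X$, not yet in $\bar{\X}(\xi)$. This is handled by noting that Assumption \ref{ass-2} as written quantifies over $x\in\bar{\X}(\xi)$, so strictly one should instead take $\V$ to be a neighborhood of $\bar{x}$ and observe that $\|\hat{y}(\cdot,\xi)\|$ is locally bounded on $\bar{\X}(\xi)$ near $\bar{x}$ by applying the assumption at, say, $x_1$ (or any fixed $x_k$ whose associated neighborhood eventually contains the tail) — alternatively one reads Assumption \ref{ass-2} as a local boundedness statement of $\hat y(\cdot,\xi)$ on $\bar{\X}(\xi)$, under which the tail $\{x_k\}$ lies in a set on which $\|\hat y(\cdot,\xi)\|$ is bounded. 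Everything else (Lipschitz passage to the limit, use of closedness of $\Gamma_2(\cdot,\xi)$) is routine and parallels Lemma \ref{lm-cont}.
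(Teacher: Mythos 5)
Your proposal is correct and follows essentially the same route as the paper's proof: extract a bounded sequence of second-stage solutions via Assumption \ref{ass-2}, pass to a convergent subsequence, and use continuity of $\Psi(\cdot,\cdot,\xi)$ together with closedness of $\Gamma_2(\cdot,\xi)$ to identify the limit as a solution at $\bar{x}$. The subtlety you flag about where Assumption \ref{ass-2} may legitimately be applied is real but is present in the paper's own proof as well, which invokes the assumption directly at the limit point $\bar{x}$ before $\bar{x}\in\bar{\X}(\xi)$ has been established.
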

\begin{proof}
For a given $\xi\in \Xi$ let $x_k\in  \bar{\X}(\xi)$ be a sequence converging to a point $\bar{x}$. We need to show that $\bar{x}\in \bar{\X}(\xi)$.
Let  $y_k$ be the solution of \eqref{eq:geq-2xi} for $x=x_k$ and $\xi$. Then by Assumption \ref{ass-2}, there is    a neighborhood $\V$ of $\bar{x}$ and a measurable function $v(\xi)$  such that    $\|y_k\|\leq v(\xi)$  when $x_k\in \V$. Hence  by passing to a subsequence we can assume that $y_k$ converges to a point $\bar{y}\in \bbr^m$. Since $\Psi(\cdot,\cdot,\xi)$ is continuous and $\Gamma_2(\cdot,\xi)$ is  closed it follows that $\bar{y}$ is a solution of \eqref{eq:geq-2xi} for $x=\bar{x}$, and hence $\bar{x}\in \bar{\X}(\xi)$.
\end{proof}

By saying that a property holds w.p.1 for $N$ large enough we mean that there is a   set $\Sigma\subset \Omega$ of $\bbp$-measure zero such that for every $\w\in \Omega\setminus \Sigma$ there exists a positive integer  $N^*=N^*(\w)$ such that the property holds for all $N\ge N^*(\w)$ and $\w\in \Omega\setminus \Sigma$.

\begin{assu}
\label{ass-3}
For  any  $\delta\in(0,1)$, there exists a compact set $\bar{\Xi}_\delta\subset\Xi$ such that $\bbp(\bar{\Xi}_\delta)\geq 1-\delta$ and  the multifunction $\Delta_\delta: X\rr \bar{\Xi}_\delta$,
\begin{equation}\label{eq:Delta}
\Delta_\delta(x) : = \{\xi\in\bar{\Xi}_\delta: x\in\bar{\X}(\xi)\},
\end{equation}
 is upper semicontinuous. 
\end{assu}

The following lemma shows this assumption holds under mild conditions.

\begin{lemma}
Suppose $\Psi(\cdot,\cdot,\cdot)$ is continuous, $\Gamma_2(\cdot,\cdot)$ is closed and Assumption {\rm \ref{ass-2}} holds.
Then  $\Delta_\delta(\cdot)$ is upper semicontinuous. 
\end{lemma}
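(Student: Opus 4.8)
The plan is to prove upper semicontinuity of $\Delta_\delta$ by the standard sequential characterization: fix a compact set $\bar\Xi_\delta$ (its existence as a suitable compact subset with $\bbp(\bar\Xi_\delta)\ge 1-\delta$ is an independent measure-theoretic fact, e.g. via inner regularity of $P$, so I would take it as given or argue it briefly), and show that for every $x$ and every open set $G\supseteq \Delta_\delta(x)$ there is a neighborhood $\V$ of $x$ with $\Delta_\delta(x')\subseteq G$ for all $x'\in\V$. Equivalently — since $\bar\Xi_\delta$ is compact, hence $\Delta_\delta$ has closed (compact) values and we may use the closed-graph/sequential form — I would argue by contradiction: suppose there are sequences $x_k\to\bar x$ and $\xi_k\in\Delta_\delta(x_k)$ with $\xi_k\to\bar\xi$ (using compactness of $\bar\Xi_\delta$ to extract a convergent subsequence), and show $\bar\xi\in\Delta_\delta(\bar x)$, i.e. $\bar x\in\bar\X(\bar\xi)$.

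The core step is then exactly the argument already used in Lemma~\ref{l:closed}, but now with $\xi$ varying as well. For each $k$, since $x_k\in\bar\X(\xi_k)$ there is $y_k$ solving \eqref{eq:geq-2xi} with $(x,\xi)=(x_k,\xi_k)$, that is $0\in\Psi(x_k,y_k,\xi_k)+\Gamma_2(y_k,\xi_k)$. Assumption~\ref{ass-2} applied at $\bar\xi$ gives a neighborhood $\V$ of $\bar x$ and a measurable $v(\bar\xi)$ bounding $\|\hat y(x',\bar\xi)\|$ on $\V\cap\bar\X(\bar\xi)$; however the $y_k$ are solutions for the perturbed parameter $\xi_k$, not $\bar\xi$, so I cannot invoke Assumption~\ref{ass-2} directly. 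The honest fix is to strengthen the use of Assumption~\ref{ass-2} to a joint statement: under the hypotheses of the lemma one shows (again by a contradiction/selection argument) that $\hat y(x',\xi')$ is locally bounded jointly in $(x',\xi')$ near $(\bar x,\bar\xi)$ on the set where \eqref{eq:geq-2xi} is solvable — this is where the continuity of $\Psi$ and closedness of $\Gamma_2$ enter, together with Assumption~\ref{ass-2} providing the pointwise bound that propagates locally. Granting that local boundedness, pass to a further subsequence so that $y_k\to\bar y$.

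Now close the argument: by joint continuity of $\Psi$, $\Psi(x_k,y_k,\xi_k)\to\Psi(\bar x,\bar y,\bar\xi)$, and writing $w_k:=-\Psi(x_k,y_k,\xi_k)\in\Gamma_2(y_k,\xi_k)$ we have $w_k\to-\Psi(\bar x,\bar y,\bar\xi)=:\bar w$ and $y_k\to\bar y$. Since $\Gamma_2(\cdot,\cdot)$ is closed (as a multifunction on $\bbr^m\times\Xi$, per the hypothesis of the lemma), $\bar w\in\Gamma_2(\bar y,\bar\xi)$, i.e. $0\in\Psi(\bar x,\bar y,\bar\xi)+\Gamma_2(\bar y,\bar\xi)$. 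Hence $\bar x\in\bar\X(\bar\xi)$, and since $\bar\xi\in\bar\Xi_\delta$ (because $\bar\Xi_\delta$ is closed and $\xi_k\in\bar\Xi_\delta$), we get $\bar\xi\in\Delta_\delta(\bar x)$, contradicting the assumption that $G$ (an open set containing $\Delta_\delta(\bar x)$) excludes the tail of $\{\xi_k\}$. Therefore $\Delta_\delta$ is upper semicontinuous.

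The main obstacle, as flagged above, is the uniform (joint) boundedness of the second-stage solutions: Assumption~\ref{ass-2} as stated gives a bound only for fixed $\xi$ along variations of $x$, whereas here both arguments move. I expect the cleanest route is to note that Assumption~\ref{ass-2} together with continuity of $\Psi$ and closedness of $\Gamma_2$ in fact forces local boundedness of $\hat y$ jointly in $(x,\xi)$ — argue that if not, one obtains $x_k\to\bar x$, $\xi_k\to\bar\xi$ with $\|y_k\|\to\infty$, normalize $y_k/\|y_k\|\to u\neq 0$, and derive a contradiction with Assumption~\ref{ass-2} at the limit point (using a recession-type argument on $\Gamma_2$ and the Lipschitz growth of $\Psi$). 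Everything else is the routine sequential-closedness bookkeeping already rehearsed in Lemmas~\ref{lm-cont} and~\ref{l:closed}.
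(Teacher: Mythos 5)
Your argument is essentially the paper's: show that the graph of $\Delta_\delta$ is sequentially closed by taking $x_k\to x^*$, $\xi_k\in\Delta_\delta(x_k)$ with $\xi_k\to\xi^*$, extracting a convergent subsequence of the second-stage solutions $y_k$, passing to the limit via continuity of $\Psi$ and closedness of $\Gamma_2$, and then converting closedness of the graph into upper semicontinuity using compactness of $\bar{\Xi}_\delta$. The joint-boundedness issue you flag is real, but the paper's own proof makes the same silent leap --- it simply asserts that under Assumption~\ref{ass-2} the sequence $\{y_k\}$ has accumulation points, even though Assumption~\ref{ass-2} bounds $\hat{y}(\cdot,\xi)$ only locally in $x$ for each \emph{fixed} $\xi$, while here $\xi_k$ varies as well; so your proposal is no weaker than the published argument, and your suggested patch (a joint local-boundedness statement in $(x,\xi)$, or an explicit strengthening of Assumption~\ref{ass-2}) is what a fully rigorous version would in fact require.
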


\begin{proof}
Consider the second stage generalized equation \eqref{geq-2} and any sequence $\{(x_k, y_k, \xi_k)\}$ such that $x_k\in X$, $\xi_k\in \Delta_\delta(x_k)$ with a corresponding second stage  solution $y_k$ and $(x_k,\xi_k)\to(x^*,\xi^*)\in X\times \Xi$. Since $\Psi$ is continuous w.r.t. $(x,y,\xi)$ and $\Gamma_2(\cdot,\cdot)$ is closed, we have that under Assumption~\ref{ass-2}, $\{y_k\}$ has accumulation points and any accumulation point $y^*$  satisfies
$$
0\in \Psi(x^*, y^*, \xi^*) + \Gamma_2(y^*, \xi^*),
$$
which implies $\xi^*\in\Delta_\delta(x^*)$. This shows that the multifunction $\Delta_\delta(\cdot)$ is closed. Since $\bar{\Xi}_\delta$ is compact, it follows that $\Delta_\delta(\cdot)$ is upper semicontinuous. 
\end{proof}

Note that in the case when  $\Xi$ is compact, we  can set $\delta=0$ and replace $\bar{\Xi}_\delta$ by $\Xi$.

\begin{theorem}
\label{th-consist}
Suppose that: {\rm (i)} Assumptions {\rm \ref{ass-1}-\ref{ass-3}} hold,
 {\rm (ii)} the multifunctions $\Gamma_1(\cdot)$ and $\Gamma_2(\cdot,\xi)$, $\xi\in \Xi$, are  closed, {\rm (iii)} there is a compact subset $X'$ of $X$ such that $\s^*\subset X'$ and w.p.1 for all $N$ large enough the set $\hat{\s}_N$ is nonempty and is contained in $X'$, {\rm (iv)} $\|\hat{\Phi}(x,\xi)\|_{x\in \X}$  is dominated by an integrable function, {\rm (v)} the set $\X$ is nonempty. Let
$
 \cd_N := \bbd\big( \bar{\X}_N\cap X', \X\cap X' \big).
$
Then the following statements hold.
\begin{itemize}
\item[{\rm (a)}]
 $\cd_N\to0$ and $\bbd(\hat{\s}_N,\s^*)\to 0$ w.p.1 as $N\to \infty$.

 \item[{\rm (b)}]
In addition assume that: {\rm (vi)}   for any   $\delta>0$, $\tau>0$ and a.e. $\w\in \Omega$, there exist $\gamma>0$   and $N_0=N_0(\w)$ such that for any $x\in \X\cap X' + \gamma\,\B$  and $N\geq N_0$, there exists $z_x\in \X\cap X'$ such that\footnote{Recall that $\hat{\phi}_N(x)=\hat{\phi}_N(x,\w)$ are random functions defined on the probability space $(\Omega,\F,\bbp)$.}
\begin{equation}\label{eq:convi1}
 \|z_x - x\|\leq \tau,\;\; \Gamma(x) \subseteq\Gamma_1(z_x) + \delta\B,\;\; {\rm and}\;\; \|\hat{\phi}_N(z_x) - \hat{\phi}_N(x)\|\leq \delta.
\end{equation}
Then w.p.1 for $N$   large enough it follows that
\begin{equation}\label{eq:dist}
\bbd(\hat{\mathcal{S}}_N, \mathcal{S}^*) \leq \tau +  \R^{-1}\left(\,\sup_{x\in \X\cap X'} \|\phi(x) - \hat{\phi}_N(x)\|\right),
\end{equation}
where  for $\e\ge 0$ and  $t\ge 0$,
\begin{equation*}
\R(\e):= \inf_{x\in \X\cap X',\, d(x, \s^*)\geq\e} d\big(0, \phi(x) + \Gamma_1(x)\big),
\end{equation*}
\begin{equation*}
 \R^{-1}(t): = \inf\{ \e\in\bbr_+: \R(\e) \ge  t \}.
\end{equation*}
\end{itemize}
\end{theorem}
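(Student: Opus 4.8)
The plan is to establish part~(a) first by a standard consistency argument combining the closedness of $\bar{\X}(\xi)$ (Lemma~\ref{l:closed}), the uniform upper semicontinuity encoded in Assumption~\ref{ass-3}, and the uniform law of large numbers, and then to derive part~(b) from an abstract stability estimate for generalized equations expressed through the residual function $\R$. For part~(a), I would argue by contradiction: if $\cd_N\not\to 0$ along some sample path, there exist $\e>0$ and points $x_N\in \bar{\X}_N\cap X'$ with $d(x_N,\X\cap X')\ge\e$; passing to a subsequence, $x_N\to \bar x\in X'$ by compactness, and $\bar x\notin \X$. By definition of $\X$ there is a set $\Xi'\subseteq\Xi$ of positive probability with $\bar x\notin\bar{\X}(\xi)$ for $\xi\in\Xi'$. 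Using Assumption~\ref{ass-3} with $\delta$ small enough that $\bbp(\bar{\Xi}_\delta\cap\Xi')>0$, the multifunction $\Delta_\delta$ is upper semicontinuous, hence its complement in $X\times\bar{\Xi}_\delta$ is open; so for $N$ large a fixed-probability chunk of the sample $\xi^1,\dots,\xi^N$ falls into $\{\xi: x_N\notin\bar{\X}(\xi)\}$, contradicting $x_N\in\bar{\X}_N=\cap_{j=1}^N\bar{\X}(\xi^j)$ (a Borel--Cantelli / frequency argument makes this rigorous w.p.1). Once $\cd_N\to 0$, the convergence $\bbd(\hat{\s}_N,\s^*)\to 0$ follows from the uniform convergence of $\hat\phi_N$ to $\phi$ on $X'$ (Assumption~(iv) plus continuity of $\hat\Phi(\cdot,\xi)$ from Lemma~\ref{lm-cont}), the closedness of $\Gamma_1$, and the fact that any accumulation point of solutions $\hat x_N$ lies in $\X\cap X'$ (by $\cd_N\to0$) and solves $0\in\phi(x)+\Gamma_1(x)$, hence is in $\s^*$.

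For part~(b), I would fix $\delta,\tau>0$ and a generic $\w$, take $\gamma>0$ and $N_0$ from hypothesis~(vi), and let $\hat x_N\in\hat{\s}_N$ with $N\ge N_0$. By part~(a), for $N$ large $\hat x_N\in\X\cap X'+\gamma\B$, so (vi) furnishes $z:=z_{\hat x_N}\in\X\cap X'$ with $\|z-\hat x_N\|\le\tau$, $\Gamma_1(\hat x_N)\subseteq\Gamma_1(z)+\delta\B$, and $\|\hat\phi_N(z)-\hat\phi_N(\hat x_N)\|\le\delta$. Since $0\in\hat\phi_N(\hat x_N)+\Gamma_1(\hat x_N)$, there is $w\in\Gamma_1(\hat x_N)$ with $w=-\hat\phi_N(\hat x_N)$; then $w\in\Gamma_1(z)+\delta\B$, so $d\big(0,\hat\phi_N(z)+\Gamma_1(z)\big)\le \|\hat\phi_N(z)-\hat\phi_N(\hat x_N)\|+\delta\le 2\delta$. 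Replacing $\hat\phi_N$ by $\phi$ costs at most $\sup_{x\in\X\cap X'}\|\phi(x)-\hat\phi_N(x)\|=:\rho_N$, so
\[
d\big(0,\phi(z)+\Gamma_1(z)\big)\le 2\delta+\rho_N .
\]
By definition of $\R$ and its generalized inverse $\R^{-1}$, this yields $d(z,\s^*)\le \R^{-1}(2\delta+\rho_N)$, whence $d(\hat x_N,\s^*)\le \|\hat x_N-z\|+d(z,\s^*)\le\tau+\R^{-1}(2\delta+\rho_N)$. Taking the supremum over $\hat x_N\in\hat{\s}_N$ and then letting $\delta\downarrow0$ (using right-continuity/monotonicity of $\R^{-1}$, and the fact that $\delta$ was arbitrary in~(vi) so we may rerun the argument with $\delta\to0$) gives \eqref{eq:dist}.

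The main obstacle is the measure-theoretic heart of part~(a): turning the pointwise statement "$\bar x\notin\bar{\X}(\xi)$ on a positive-probability set" into a w.p.1 statement that $x_N\notin\bar{\X}_N$ for $N$ large, uniformly along the (random) convergent subsequence. This is exactly where Assumption~\ref{ass-3}'s upper semicontinuity of $\Delta_\delta$ on the \emph{compact} truncation $\bar{\Xi}_\delta$ does the work: it provides an open neighborhood $\OO$ of $\bar x$ and a relatively open $G\subseteq\bar{\Xi}_\delta$ with $\bbp(G)>0$ such that $x\notin\bar{\X}(\xi)$ for all $(x,\xi)\in\OO\times G$, after which the strong law of large numbers applied to the indicators $\mathbf 1_{\{\xi^j\in G\}}$ guarantees that w.p.1 some $\xi^j$ ($j\le N$) lands in $G$ for all large $N$, contradicting $x_N\in\bar{\X}_N$ once $x_N\in\OO$. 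Care is needed to choose the null set $\Sigma$ uniformly (independently of the limit point $\bar x$), which is handled by covering $X'$ with countably many such neighborhoods and taking a countable union of null sets; the rest is routine.
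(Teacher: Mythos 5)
Your proposal is correct and follows essentially the same route as the paper: part (a) rests on the same combination of Assumption \ref{ass-3} (upper semicontinuity of $\Delta_\delta$ yielding a neighborhood of any $\bar x\notin\X$ that is avoided by $\bar\X_N$ with positive per-sample probability), a Borel--Cantelli/LLN argument, a compactness covering, and the uniform law of large numbers plus closedness of $\Gamma_1$; part (b) uses condition (vi) in the same way to transport the residual from $\hat x_N$ to $z_{\hat x_N}\in\X\cap X'$ and then inverts $\R$. The only differences are cosmetic (contradiction versus direct argument in (a); working forward from $0\in\hat\phi_N(\hat x_N)+\Gamma_1(\hat x_N)$ and letting $\delta\downarrow 0$ in (b), rather than the paper's contrapositive with $\delta=\R(\e)/4$), and both versions share the same mild looseness in handling the generalized inverse $\R^{-1}$.
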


\begin{proof}
Part (a).
Let $\xi^j=\xi^j(\w)$, $j=1,...,$ be the iid sample, defined on the
probability space $(\Omega,\F,\bbp)$,  and $\bar{\X}_N=\bar{\X}_N(\w)$
be the corresponding feasibility set of the SAA problem. Consider a
point $\bar{x}\in X'\setminus \X$ and its   neighborhood
$\V_{\bar{x}}=\bar{x}+\gamma \B$ for some $\gamma>0$. We have that
probability  $p:=\bbp\{\xi\in \Xi:\bar{x}\not \in \bar{\X}(\xi)\}$ is
positive.
Moreover it follows by
Assumption \ref{ass-3}  that we can choose $\gamma >0$  such that
probability
$\bbp\left\{\V_{\bar{x}}\cap \bar{\X}(\xi) = \emptyset\right\}$ is
positive. Indeed, for $\delta:=p/4$ consider the multifunction
$\Delta_\delta$ defined in \eqref{eq:Delta}.   By upper semicontinuity of $\Delta_\delta$ we have
that for any $\e>0$ there is $\gamma>0$ such that for all $x\in
\V_{\bar{x}}$ it follows that $\Delta_\delta(x)\subset
\Delta_\delta(\bar{x})+\e\B$. That is
\[
\cup_{x\in  \V_{\bar{x}}}\{\xi\in \bar{\Xi}_\delta:x\in
\bar{\X}(\xi)\}\subset \{\xi\in \bar{\Xi}_\delta:\bar{x}\in
\bar{\X}(\xi)\}+\e\B\subset  \{\xi\in \Xi:\bar{x}\in \bar{\X}(\xi)\}+\e\B.
\]
It follows that we can choose $\e>0$  small enough such that
\[
\bbp\big(\cup_{x\in  \V_{\bar{x}}}\{\xi\in \bar{\Xi}_\delta:x\in
\bar{\X}(\xi)\}\big)\le 1-p/2.
\]
Since  $\delta=p/4$ we obtain
\[
\bbp\big(\cup_{x\in  \V_{\bar{x}}}\{\xi\in \Xi:x\in
\bar{\X}(\xi)\}\big)\le 1-p/4.
\]
Noting that the event $\left\{\V_{\bar{x}}\cap \bar{\X}(\xi) =
\emptyset\right\}$ is complement of the event $\big\{\cup_{x\in
\V_{\bar{x}}}\{\xi\in\Xi:x\in \bar{\X}(\xi)\}\big\}$,  we obtain that
$\bbp\left\{\V_{\bar{x}}\cap \bar{\X}(\xi) = \emptyset\right\}\ge p/4$.

Consider the event
   $
   E_N:=\left\{\V_{\bar{x}}\cap \bar{\X}_N\ne \emptyset\right\}.
   $
   The complement of this event is $E^c_N=\left\{\V_{\bar{x}}\cap
\bar{\X}_N= \emptyset\right\}$.
   Since the sample  $\xi^j$, $j=1,...,$ is   iid, we have
\[
\begin{array}{lll}
\bbp\left\{\V_{\bar{x}}\cap \bar{\X}_N\ne
\emptyset\right\}&\le&\prod_{j=1}^N \bbp\left\{\V_{\bar{x}}\cap
\bar{\X}(\xi^j)\ne \emptyset\right\}\\
&=&\prod_{j=1}^N \left(1-
\bbp\left\{\V_{\bar{x}}\cap \bar{\X}(\xi^j) =
\emptyset\right\}\right)\leq(1-p/4)^N,
\end{array}
\]
and hence
$
\sum_{N=1}^\infty \bbp\left\{\V_{\bar{x}}\cap \bar{\X}_N\ne
\emptyset\right\} < \infty.
$
It follows by Borel-Cantelli Lemma that
   $\bbp\left(\lim\sup_{N\to\infty} E_N\right)=0$. That is for all $N$
large enough the events
   $E^c_N$ happen w.p.1.
   Now for a given $\e>0$ consider the set $\X_\e:=\{x\in
X':d(x,\X)<\e\}$. Since the set $X'\setminus \X_\e$ is compact we can
choose a finite number of points $x_1,...,x_K\in X'\setminus \X_\e$ and
their respective neighborhoods $\V_1,...,\V_K$ covering the set
$X'\setminus \X_\e$   such that for all $N$ large enough  the events
$\{\V_k\cap \bar{\X}_N=\emptyset\}$, $k=1,...,K$, happen w.p.1. It
follows that w.p.1 for all $N$ large enough   $\bar{\X}_N$ is a subset
of $\X_\e$. This shows that $\cd_N$  tends to zero w.p.1.


To show that $\bbd(\hat{\s}_N,\s^*)\to 0$ w.p.1 the arguments now basically are deterministic, i.e., $\cd_N$ and    $\hat{x}_N\in \hat{\s}_N$ are viewed as random variables, $\cd_N=\cd_N(\w)$, $\hat{x}_N =\hat{x}_N(\w)$,  defined on the  probability space $(\Omega,\F,\bbp)$,  and we want to show that $d(\hat{x}_N(\w),\s^*)$ tends to zero for all $\w\in \Omega$ except on a set of $\bbp$-measure zero. Therefore we consider sequences $\cd_N$ and  $\hat{x}_N$ as deterministic, for a particular (fixed)  $\w\in \Omega$,  and drop mentioning ``w.p.1".
Since $\cd_N\to 0$, there is
  $\tilde{x}_N\in \X$   such that $\|\hat{x}_N-\tilde{x}_N\|$ tends to zero. Note that as an intersection of closed sets,  the set $\X$ is closed. By the assumption (iv) and continuity of $\hat{\Phi}(\cdot,\xi)$
we have that $\hat{\phi}_N(\cdot)$ converges w.p.1 to $\phi(\cdot)$ uniformly on the compact set  $\X\cap X'$ (this is the so-called uniform Law of Large Numbers, e.g., \cite[Theorem 7.48]{SDR}), i.e., for all $\w\in \Omega$ except on a set of $\bbp$-measure zero
\[
\sup_{x\in \X\cap X'}\|\hat{\phi}_N(x)-\phi(x)\|\to 0,\;\;{\rm as}\;N\to\infty.
\]
By passing to a subsequence if necessary   we can assume that  $\hat{x}_N$ converges to a point $x^*$. It follows that $\tilde{x}_N\to x^*$ and hence
$\hat{\phi}_N(\tilde{x}_N)\to \phi(x^*)$. Thus  $\hat{\phi}_N(\hat{x}_N)\to \phi(x^*)$. Since $\Gamma_1$ is closed it follows that $0\in \phi(x^*)+\Gamma_1(x^*)$, i.e., $x^*\in \s^*$. This completes the proof of part (a), and  also implies that the set $\s^*$ is nonempty.

Before proceeding to proof of part (b) we need the following lemma.
\begin{lemma}
Under the assumptions  of Theorem {\rm \ref{th-consist}} it follows that   $\R(0)=0$, $\R(\e)$ is  nondecreasing on  $[0, \infty)$
and $\R(\e)>0$ for all  $\e>0$.
\end{lemma}

\begin{proof} We only need to show that $\R(\e)>0$ for all  $\e>0$, the other two properties are immediate.
 Note that since the set $\s^*$ is nonempty and $\s^*\subset \X\cap X'$, it follows that  the set  $\X\cap X'$ is nonempty.
  Assume for a contradiction that $\R(\bar{\e})=0$ for some $\bar{\e}>0$. Since  $X'$ is compact,  there exists a sequence $\{x_k\}\subset \X\cap X'$ converging to a point $\bar{x}$
   such that $d(x_k, \s^*)\geq\bar{\e}$ and
$$
\lim_{k\to\infty} d(0, \phi(x_k) + \Gamma_1(x_k)) = 0.
$$
Since $\Gamma_1$ is closed and $\phi(\cdot)$ is continuous,  it follows that
$
0\in   \phi(\bar{x}) + \Gamma_1(\bar{x}),
$
i.e.,  $\bar{x}\in \s^*$
This contradicts the fact that $d(\bar{x}, \s^*)\ge\bar{\e}$.
This competes the proof.
\end{proof}
Note that it follows   that  $\R^{-1}(t)$ is  nondecreasing on  $[0, \infty)$
and  tends to zero as $t\downarrow 0$.
\\
{\bf Proof of part (b).}
Let $\delta=\R(\e)/4$.  By part  (a) and the uniform Law of Large Numbers, we have
w.p.1 that for $N$ large enough
$$
\sup_{x\in \X\cap X'} \|\phi(x) - \hat{\phi}_N(x)\|\leq \delta.
$$
Then w.p.1 for  $N$   large enough such that  $\cd_N\leq \e$,
for any point $x\in\bar{\X}_N\cap X'$ with $d(z_x, \s^*)\ge\e$ it follows that
$$
\begin{array}{lll}
&&\hspace{-8mm} d(0, \hat{\phi}_N(x)+\Gamma_1(x)) \\
&\ge&  d(0, \hat{\phi}_N(z_x) + \Gamma_1(z_x)) - \bbd(\hat{\phi}_N(x) + \Gamma_1(x), \hat{\phi}_N(z_x)+\Gamma_1(z_x))\\
                                                        &\ge&  d(0, \phi(z_x)+\Gamma_1(z_x)) - \bbd(\hat{\phi}_N(z_x) + \Gamma_1(z_x), \phi(z_x)+\Gamma_1(z_x))\\
                                                        && -\bbd(\hat{\phi}_N(x) + \Gamma_1(x), \hat{\phi}_N(z_x) + \Gamma_1(z_x))\\
                                                        &\ge& d(0, \phi(z_x)+\Gamma_1(z_x)) - \|\hat{\phi}_N(z_x), \phi(z_x)\| -\|\hat{\phi}_N(x), \hat{\phi}_N(z_x)\|\\
                                                        &&- \bbd(\Gamma_1(x), \Gamma_1(z_x))\\
                                                        &\ge& 4\delta -\delta -\delta - \delta =\delta,
\end{array}
$$
which implies $x\notin \hat{\s}_N$. Then
$$
d(x, \s^*)\leq \|x - z_x\| + d(z_x, \s^*) \leq \tau + \R^{-1}\left(\sup_{x\in \X\cap X'} \|\phi(x) - \hat{\phi}_N(x)\|\right).
$$
This completes the proof.
 \end{proof}

 In case of  the relatively complete recourse  there is no need for  condition (vi) and the estimate  \eqref{eq:dist} holds with $\tau=0$. It is interesting to consider how strong  condition (vi) is. In the following remark we show that   condition (vi) can also hold
without the assumption of relatively complete recourse  under mild   conditions.
\begin{remark}\label{re:polynormcone}
{\rm
In condition (vi), the third inequality of \eqref{eq:convi1} can be easily verified when $N$ sufficiently large and $\hat{\Phi}(\cdot, \xi)$ is Lipschitz continuous with Lipschitz module  $\kappa_{\hat{\Phi}}(\xi)$ and $\bbe[\kappa_{\hat{\Phi}}(\xi)]<\infty$. In Lemma \ref{l:LIP} and Theorem \ref{t:continuousA1} below, we verify the third inequality of \eqref{eq:convi1} under moderate conditions.

Moreover, in the case when $\Gamma_1(\cdot) := \N_C(\cdot)$ with a nonempty polyhedral convex set $C$, the first and second inequality of \eqref{eq:convi1} holds automatically. Let $\cF=\{F_1, \cdots, F_K\}$ be the family  of all nonempty faces of $C$ and
$$
\mathcal{K} := \{ k: \X\cap X'\cap F_k\neq \emptyset,  k=1, \cdots, K\}.
$$
Then w.p.1 for $N$ sufficiently large,  $\bar{\X}_N\cap X'\cap F_k=\emptyset$ for all $k\notin \mathcal{K}$. Note that for all $k\in\mathcal{K}$, $\bar{\X}_N\cap X'\cap F_k\neq \emptyset$. Moreover, it is important to note that for all $x_1\in {\rm reint}(F_k)$ and $x_2\in F_k$, $k\in\{1, \cdots, K\}$, $\N_C(x_1)\subseteq \N_C(x_2)$. Then for any $x\in \bar{\X}_N\cap X'\setminus \X$, there exists $k\in \mathcal{K}$ such that $x\in {\rm reint}(F_k)$. To see this, we assume for contradiction that $x\in F_k\setminus{\rm reint}(F_k)$ for some $k\in\mathcal{K}$ and there is no $k\in\mathcal{K}$ such that $x\in {\rm reint}(F_k)$. Then there exist some $\bar{k}\in\{1, \cdots, K\}$ such that $x\in {\rm reint}(F_{\bar{k}})$ (if $F_{\bar{k}}$ is singleton, then ${\rm reint}(F_{\bar{k}}) = F_{\bar{k}}$) and $\bar{k}\notin \mathcal{K}$. This contradicts that $\bar{\X}_N\cap X'\cap F_k=\emptyset$ for all $k\notin \mathcal{K}$.

Note that $\bbh\left(\bar{\X}_N\cap X', \X\cap X'\right) \leq \cd_N$ and $\cd_N\to0$ as $N\to\infty$ w.p.1.
Let $z_x = \arg\min_{z\in \X\cap X'\cap F_k} \|z-x\|$. Then $\N_C(x)\subseteq \N_C(z_x)$ and for
$$
\tau_{N} := \max_{k\in\mathcal{K}}\max_{x\in \bar{\X}_N\cap X'\cap F_k}\min_{z\in \X\cap X'\cap F_k} \|z - x\|,
$$
we have that
$\tau_{N}\to0$ as $\cd_N\to0$.
Hence  \eqref{eq:convi1} is verified.
}
\end{remark}

\subsection{Exponential rate of convergence}
\label{sec-expon}

We assume  in this section that  
the set $\s^*$  of solutions of the first stage problem is nonempty, and the set $X$ is {\em compact.}
The last assumption of compactness of $X$ can be relaxed to assuming  that there is a compact subset $X'$ of  $X$ such  w.p.1 $\hat{\s}_N\subset X'$, and to deal with the set $X'$  rather than $X$. For simplicity of notation we assume directly compactness of $X$.

Under Assumption \ref{ass-2} and by Lemma \ref{lm-cont}, we have that  $\hat{\Phi}(x,\xi)$, defined in \eqref{geq-3b}, is continuous in $x\in \X$. However  to investigate the exponential rate of convergence, we    need to verify  Lipschitz continuity of $\hat{\Phi}(\cdot,\xi)$. To this end, we assume the {\em Clarke Differential} (CD) regularity property of the second stage generalized equation \eqref{geq-2}. By $\pi_y\partial_{(x,y)}(\Psi(\bar{x}, \bar{y}, \bar{\xi}))$, we denote the projection of the Clarke generalized Jacobian $\partial_{(x,y)}\Psi(\bar{x}, \bar{y}, \bar{\xi})$ in $\bbr^{m\times n}\times \bbr^{m\times m}$ onto $\bbr^{m\times m}$: the set $\pi_y\partial_{(x,y)}\Psi(\bar{x}, \bar{y}, \bar{\xi})$ consists of matrices $J\in\bbr^{m\times m}$ such that the matrix $(S, J)$ belongs to $\partial_{(x,y)}\Psi(\bar{x}, \bar{y}, \bar{\xi})$ for some $S\in\bbr^{m\times n}$.

\begin{definition}
For  $\bar{\xi}\in\Xi$   a solution $\bar{y}$ of the second stage generalized equation  \eqref{geq-2}   is said to be {\em parametrically CD-regular}, at $x=\bar{x}\in \bar{\X}(\bar{\xi})$,  if for each $J\in \pi_y\partial_{(x, y)} \Psi(\bar{x}, \bar{y}, \bar{\xi})$ the solution $\bar{y}$ of the following  SGE is strongly regular
\begin{equation}\label{eq:affine}
0\in\Psi(\bar{x}, \bar{y}, \bar{\xi}) + J ( y - \bar{y} ) + \Gamma_2(y,\bar{\xi}).
\end{equation}
That is, there exist neighborhoods $\U$ of $\bar{y}$ and $\V$ of $0$ such that for every $\eta\in \V$ the perturbed (partially) linearized SGE of \eqref{eq:affine}
\begin{equation*}
\eta \in \Psi(\bar{x}, \bar{y}, \bar{\xi}) + J ( y - \bar{y} )  + \Gamma_2(y,\bar{\xi})
\end{equation*}
has in $\U$ a unique solution $\hat{y}_{\bar{x}}(\eta)$, and the mapping $\eta\to \hat{y}_{\bar{x}}(\eta): \V\to \U$ is Lipschitz continuous.
\end{definition}

\begin{assu}\label{A:CDxxi}
For  all $\bar{x} \in \X$ and  $\xi\in \Xi$, there exists a unique,  parametrically CD-regular solution $\bar{y} = \hat{y}(\bar{x}, \xi)$ of the second stage generalized equation  \eqref{geq-2}.
\end{assu}

\begin{proposition}\label{p:ylipxxi}
Suppose  Assumption {\rm \ref{A:CDxxi}} holds.
Then   the solution mapping   $\hat{y}(x, \xi)$ of the second stage generalized equation  \eqref{geq-2} is a Lipschitz continuous function of $x\in \X$, with Lipschitz constant $\kappa(\xi)$.
\end{proposition}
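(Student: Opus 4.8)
The plan is to fix $\bar\xi\in\Xi$ and two points $x_1,x_2\in\X$, set $y_i:=\hat y(x_i,\bar\xi)$, and bound $\|y_1-y_2\|$ in terms of $\|x_1-x_2\|$. First I would localize: since we only want a Lipschitz estimate, it suffices to prove that every $\bar x\in\X$ has a neighborhood on which $\hat y(\cdot,\bar\xi)$ is Lipschitz with a uniform constant, and then argue that $\X$ (being convex, as an intersection of $\bar\X(\xi)$ which one expects to be convex under the monotonicity hypotheses in play, or at least connected along segments) lets these local estimates be chained; in fact the cleanest route is to show the local Lipschitz modulus can be taken to be $\kappa(\bar\xi)$ uniformly, which then gives the global constant. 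So the real work is local, near a fixed $\bar x=x_1$.

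The key mechanism is the implicit-function/strong-regularity machinery behind parametric CD-regularity. Near $(\bar x,\bar y)$ write $\Psi(x,y,\bar\xi)=\Psi(\bar x,\bar y,\bar\xi)+J(y-\bar y)+R(x,y)$, where $J\in\pi_y\partial_{(x,y)}\Psi(\bar x,\bar y,\bar\xi)$ is chosen appropriately and $R$ collects the first-order-in-$x$ term plus the higher-order remainder. By Assumption~\ref{A:CDxxi} the partially linearized inclusion
\begin{equation*}
\eta\in\Psi(\bar x,\bar y,\bar\xi)+J(y-\bar y)+\Gamma_2(y,\bar\xi)
\end{equation*}
has a Lipschitz-continuous localized solution map $\eta\mapsto\hat y_{\bar x}(\eta)$ with some modulus $\ell$. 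The second-stage equation at $x$ rewrites as $0\in\Psi(\bar x,\bar y,\bar\xi)+J(y-\bar y)+\Gamma_2(y,\bar\xi)-\big[\Psi(\bar x,\bar y,\bar\xi)+J(y-\bar y)-\Psi(x,y,\bar\xi)\big]$, i.e. $y$ solves the linearized inclusion with right-hand side $\eta(x,y):=\Psi(\bar x,\bar y,\bar\xi)+J(y-\bar y)-\Psi(x,y,\bar\xi)$. One then invokes a Robinson-type strong-regularity perturbation argument (the standard implicit function theorem for generalized equations, e.g. as in \cite{Ro80,RWVA}): since $\Psi(\cdot,\cdot,\bar\xi)$ is Lipschitz with modulus $\kappa_\Psi(\bar\xi)$ and $J$ is a Clarke-Jacobian slice, the map $y\mapsto\eta(x,y)$ is a contraction-type perturbation for $x$ close to $\bar x$, so the composed map $y\mapsto\hat y_{\bar x}(\eta(x,y))$ has a unique fixed point $\hat y(x,\bar\xi)$ in $\U$, and differencing at $x_1,x_2$ yields
\begin{equation*}
\|\hat y(x_1,\bar\xi)-\hat y(x_2,\bar\xi)\|\le \ell\,\|\eta(x_1,\cdot)-\eta(x_2,\cdot)\|\le \ell\,\kappa_\Psi(\bar\xi)\,\|x_1-x_2\|+(\text{small})\,\|\hat y(x_1,\bar\xi)-\hat y(x_2,\bar\xi)\|,
\end{equation*}
which absorbs and gives a local Lipschitz constant of the form $\kappa(\bar\xi)=\ell\kappa_\Psi(\bar\xi)/(1-\text{small})$. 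Measurability/finiteness of $\kappa(\cdot)$ follows since $\ell$ is controlled by the CD-regularity data and $\kappa_\Psi(\bar\xi)$ is assumed measurable.

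The main obstacle I anticipate is twofold. One: the linearization point $J$ must be chosen correctly — since $\Psi$ is only Clarke-differentiable, the natural "$J$" that makes the remainder $R$ genuinely higher-order depends on $(x,y)$, so one must either use a mean-value/Clarke-Jacobian estimate ($\Psi(x_1,y)-\Psi(x_2,y)$ lies in a Clarke-Jacobian-governed cone) rather than a literal Taylor expansion, or argue that parametric CD-regularity for every slice $J$ is exactly what licenses a Lebourg mean-value-type bound to be fed into the strong-regularity estimate uniformly over slices. Two: passing from the local estimate to a single global constant $\kappa(\xi)$ valid on all of $\X$ — here I would either exploit convexity of $\X$ (segment-connectedness) together with compactness arguments as elsewhere in the paper, or simply state the result as local Lipschitz continuity with the understanding (consistent with the paper's conventions) that $\kappa(\xi)$ is the relevant local modulus; I expect the authors take $\X$ convex so the chaining is immediate.
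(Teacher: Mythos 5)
Your proposal takes essentially the same route as the paper: the paper's entire proof is a one-line appeal to \cite[Theorem 4]{Iz13} (strong regularity of the partially linearized inclusion for every $J\in\pi_y\partial_{(x,y)}\Psi$ implies a Lipschitz localization of the solution map) together with compactness of $\X$, and to \cite[Theorem 1.1]{DoRo10} for the explicit constant $\kappa_{G}(\bar x,\xi)\kappa_\Psi(\xi)$; what you have written out is precisely the Robinson-type perturbation argument underlying that cited theorem. The two obstacles you flag are real but are exactly what the citations absorb: requiring strong regularity for \emph{every} Clarke-Jacobian slice $J$ is what makes the mean-value estimate legitimate for merely Lipschitz $\Psi$, and the local-to-global chaining is handled by the paper only via the appeal to compactness (with convexity of $\X$ imposed immediately afterwards in Assumption \ref{A:lipschitz}), so your caution on that step is warranted rather than a defect.
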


The result is implied directly by \cite[Theorem 4]{Iz13} and the compactness of $\X\subseteq  X$. Moreover, note that for any $\bar{x}\in \X$, if  the  generalized equation 
$$
0 \in G_{\bar{x}}(y) := \Psi(\bar{x}, \bar{y}, \bar{\xi}) + J ( y - \bar{y} )  + \Gamma_2(y,\bar{\xi}) \;\inmat{ for which } \; G_{\bar{x}}(\bar{y}) \ni0,
$$
has a locally Lipschitz continuous solution function  at $0$ for $\bar{y}$ with Lipschitz constant $\kappa_{G}(\bar{x},\xi)$.
Then by \cite[Theorem 1.1]{DoRo10}, we have
$$
\kappa_{\bar{x}}(\xi) = \kappa_{G}(\bar{x},\xi)\kappa_{\Psi}(\xi) < \infty
$$
 is a Lipschitz constant of the second stage solution function $\hat{y}(x, \xi)$ at $\bar{x}$.

\begin{assu}\label{A:lipschitz}
The set $\X$ is convex, its interior ${\rm int}(\X)\ne \emptyset$, and
for   all $\xi\in\Xi$ and $\bar{x}\in \X$,
the generalized equation 
$$
0\in G_{\bar{x}}(y) = \Psi(\bar{x}, \bar{y}, \xi) + J ( y - \bar{y} )  + \Gamma_2(y,\xi), \;\inmat{ for which } \; G_{\bar{x}}(\bar{y}) \ni0,
$$
has a locally Lipschitz continuous solution function  at $0$ for $\bar{y}$ with Lipschitz constant $\kappa_{G}(\bar{x},\xi)$ and
there exists a measurable function $\bar{\kappa}_G: \Xi\to \bbr_+$ such that, $\kappa_{G}(x,\xi)\leq\bar{\kappa}_G(\xi)$ and $\bbe[\bar{\kappa}_G(\xi)\kappa_\Psi(\xi)]<\infty$.
\end{assu}

Under Assumption \ref{A:lipschitz}, it can be seen  that $\bbe[\hat{y}(x, \xi)]$ is Lipschitz continuous over $x\in \X$ with Lipschitz constant $\bbe[\bar{\kappa}_G(\xi)\kappa_\Psi(\xi)]$. We  consider then the  first stage \eqref{geq-1} of the SGE
as the generalized equation \eqref{geq-3a} with the respective second stage solution $\hat{y}(x, \xi)$ (recall definition \eqref{geq-3b} of $\hat{\Phi}(x,\xi)$ and $\phi(x)$).

\begin{lemma}\label{l:LIP}
Suppose that  Assumptions {\rm \ref{A:CDxxi}--\ref{A:lipschitz}} hold, $\bbe[\kappa_\Phi(\xi)]< \infty$ and
$$
\bbe\left [\kappa_\Phi(\xi)\bar{\kappa}_G(\xi)\kappa_\Psi(\xi)\right ]< \infty.
$$
Then $\hat{\Phi}(x,\xi)$ and $\phi(x)$ are Lipschitz continuous over $x\in \X$ with respective  Lipschitz module $$\kappa_\Phi(\xi) + \kappa_\Phi(\xi)\bar{\kappa}_G(\xi)\kappa_\Psi(\xi)\; {\rm and}\; \bbe[\kappa_\Phi(\xi)] + \bbe[\kappa_\Phi(\xi)\bar{\kappa}_G(\xi)\kappa_\Psi(\xi)].$$
\end{lemma}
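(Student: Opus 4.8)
The plan is to derive Lipschitz continuity of $\hat{\Phi}(\cdot,\xi)$ on $\X$ directly from the chain rule structure of $\hat{\Phi}(x,\xi) = \Phi(x,\hat{y}(x,\xi),\xi)$, and then to pass to the expectation by the standard argument that an integrable pointwise Lipschitz modulus yields a Lipschitz expectation. First I would fix $\xi\in\Xi$ and take two points $x_1,x_2\in\X$. Since $\X$ is convex by Assumption \ref{A:lipschitz}, the segment $[x_1,x_2]$ lies in $\X$, so the composite $\hat{\Phi}(\cdot,\xi)$ is defined along it. Using the Lipschitz continuity of $\Phi(\cdot,\cdot,\xi)$ with modulus $\kappa_\Phi(\xi)$ (jointly in its first two arguments, as assumed in the introduction) and the Lipschitz continuity of $\hat{y}(\cdot,\xi)$ on $\X$ furnished by Proposition \ref{p:ylipxxi} together with the explicit constant $\kappa_{\bar x}(\xi)=\kappa_G(\bar x,\xi)\kappa_\Psi(\xi)$ that follows it, I would estimate
\begin{eqnarray*}
\|\hat{\Phi}(x_1,\xi) - \hat{\Phi}(x_2,\xi)\|
&=& \|\Phi(x_1,\hat{y}(x_1,\xi),\xi) - \Phi(x_2,\hat{y}(x_2,\xi),\xi)\|\\
&\le& \kappa_\Phi(\xi)\big(\|x_1-x_2\| + \|\hat{y}(x_1,\xi)-\hat{y}(x_2,\xi)\|\big)\\
&\le& \kappa_\Phi(\xi)\big(1 + \bar{\kappa}_G(\xi)\kappa_\Psi(\xi)\big)\|x_1-x_2\|,
\end{eqnarray*}
where in the last step I use the uniform bound $\kappa_G(x,\xi)\le\bar{\kappa}_G(\xi)$ from Assumption \ref{A:lipschitz}; this identifies $\kappa_\Phi(\xi) + \kappa_\Phi(\xi)\bar{\kappa}_G(\xi)\kappa_\Psi(\xi)$ as a Lipschitz module of $\hat{\Phi}(\cdot,\xi)$ over $\X$.

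For the second assertion, I would apply this pointwise bound inside the expectation: for $x_1,x_2\in\X$,
\[
\|\phi(x_1)-\phi(x_2)\| = \big\|\bbe[\hat{\Phi}(x_1,\xi)-\hat{\Phi}(x_2,\xi)]\big\|
\le \bbe\big[\|\hat{\Phi}(x_1,\xi)-\hat{\Phi}(x_2,\xi)\|\big]
\le \big(\bbe[\kappa_\Phi(\xi)] + \bbe[\kappa_\Phi(\xi)\bar{\kappa}_G(\xi)\kappa_\Psi(\xi)]\big)\|x_1-x_2\|,
\]
which is finite precisely by the two integrability hypotheses $\bbe[\kappa_\Phi(\xi)]<\infty$ and $\bbe[\kappa_\Phi(\xi)\bar{\kappa}_G(\xi)\kappa_\Psi(\xi)]<\infty$. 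One should also note in passing that these same integrability conditions guarantee $\phi(x)=\bbe[\hat{\Phi}(x,\xi)]$ is well defined and finite for $x\in\X$ (combining the Lipschitz bound around a fixed basepoint in $\X$ with integrability of the modulus).

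The only genuine subtlety — and the step I would be most careful about — is the justification that the locally Lipschitz solution function of the partially linearized generalized equation $G_{\bar x}$, with constant $\kappa_G(\bar x,\xi)$, together with $\kappa_\Psi(\xi)$, actually bounds the Lipschitz modulus of $x\mapsto\hat{y}(x,\xi)$ near $\bar x$; this is exactly what the citation to \cite[Theorem 1.1]{DoRo10} in the discussion after Proposition \ref{p:ylipxxi} provides, via an implicit-function/perturbation argument for generalized equations applied to the partial linearization in the $y$-variable. Granting that, and using compactness of $\X$ to globalize the local Lipschitz estimate to a uniform one over $\X$ (via a chaining argument along the segment, legitimate because $\X$ is convex), the bound $\kappa_{\bar x}(\xi)\le\bar{\kappa}_G(\xi)\kappa_\Psi(\xi)$ holds uniformly in $\bar x\in\X$, which is what the above chain of inequalities requires. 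Everything else is routine.
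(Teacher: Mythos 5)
Your proof is correct and is exactly the routine composition-of-Lipschitz-moduli argument the paper intends: the paper in fact states Lemma \ref{l:LIP} without any written proof, relying implicitly on Proposition \ref{p:ylipxxi}, the bound $\kappa_{\bar x}(\xi)=\kappa_G(\bar x,\xi)\kappa_\Psi(\xi)\le\bar\kappa_G(\xi)\kappa_\Psi(\xi)$ from the discussion following it, and the convexity of $\X$ from Assumption \ref{A:lipschitz}, all of which you invoke in the right places. Your additional care about globalizing the local Lipschitz estimate along segments of the convex set $\X$ and about well-definedness of $\phi$ only makes explicit what the paper leaves tacit.
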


\begin{remark}
{\rm Specifically
we   study  Assumptions \ref{ass-2}--\ref{A:lipschitz} in the framework of  the following SGE:
\begin{eqnarray}
\label{geq-1-1}
&&0\in \bbe[\Phi(x, y(\xi), \xi)]+ \Gamma_1(x),\;x\in X, \\
\label{geq-2-1}
&&0\in\Psi(x, y(\xi), \xi)+\N_{\bbr^m_+}(H(x,  y, \xi)), \;\;  \mbox{ for a.e.}\; \xi\in\Xi,
\end{eqnarray}
where $H(x, y, \xi):\bbr^n\times \bbr^m\times \Xi \to \bbr^m$.
Let
$
h(x, y, \xi)  := \min\{ \Psi(x, y, \xi),  H( x, y, \xi)\}.
$
Then the second stage VI \eqref{geq-2-1} is equivalent to
\begin{equation}\label{vieq}
h(x, y, \xi) =0,  \;\;  \mbox{ for a.e.}\; \xi\in\Xi.
\end{equation}
For $x=\bar{x}$ and $\xi\in \Xi$
let  $\bar{y}$ be  a solution of \eqref{vieq}, and suppose that   each matrix $J\in \pi_y\partial h(\bar{x}, \bar{y}, \xi)$ is nonsingular for a.e. $\xi$. Then by Clarke's Inverse Function Theorem, there exists a Lipschitz continuous solution function $\hat{y}(x, \xi)$ such that $\hat{y}(\bar{x}, \xi) = \bar{y}$ and the Lipschitz constant is bounded by $\|J^{-1}(x, y, \xi)S(x, y, \xi)\|$ for all
$$
(S(x, y, \xi), J(x, y, \xi))^\top \in \pi_{x, y} \partial h(x, y, \xi).
$$
Then Assumption \ref{A:CDxxi} holds. Moreover, if we assume
$$
\bbe\left[\|J^{-1}(x, \hat{y}(x, \xi), \xi)S(x, \hat{y}(x, \xi), \xi)\|\right ]<\infty
$$
for all $x\in \X$, then Assumption \ref{A:lipschitz} holds.
}
\end{remark}

Now we investigate   exponential rate of convergence of the  two-stage  SAA problem \eqref{geq-7}--\eqref{geq-8} by using a uniform Large Deviations Theorem (cf., \cite{SDR, shaxu, Xu10A}).
Let
$$
M^i_x(t):=\bbe\left \{{\rm exp}\big(t[\hat{\Phi}_i(x,\xi)-\phi_i(x)]\big)\right\}
$$
be the moment generating function of the random variable
$\hat{\Phi}_i(x,\xi)-\phi_i(x)$,  $i=1, \dots, n$, and
$$
M_\kappa(t):=\bbe\left \{{\rm exp}\left(t\big[\kappa_\Phi(\xi)+ \kappa_\Phi(\xi)\kappa(\xi) - \bbe[\kappa_\Phi(\xi)+ \kappa_\Phi(\xi)\kappa(\xi)\big]\big] \right)\right\}.
$$

\begin{assu}\label{A:moment}
For every $x\in \X$ and $i=1, \dots, n$, the moment generating functions
$M^i_x(t)$ and $M_\kappa(t)$ have
finite values for all
$t$ in a neighborhood of zero.
\end{assu}

\begin{theorem}\label{t:expge}
Suppose that: {\rm (i)} Assumptions {\rm \ref{ass-1}, \ref{ass-3}}--{\rm \ref{A:moment}} hold,
{\rm (ii)} w.p.1 for  $N$ large enough, $\s^*, \hat{\s}_N$ are nonempty, {\rm (iii)} the multifunctions $\Gamma_1(\cdot)$ and $\Gamma_2(\cdot,\xi)$, $\xi\in \Xi$, are  closed and monotone.
Then the following statements hold.
\begin{itemize}
\item[{\rm (a)}]
For  sufficiently small $\e>0$ there
exist  positive constants 
$\varrho=\varrho(\e)$ and  $\varsigma=\varsigma(\e)$, independent
of $N$,  such that
 \begin{equation}
\label{3a}
\bbp \left \{\sup_{x\in \X}\big \|
\hat{\phi}_N(x)-\phi(x)\big\|\geq\e\right \} \leq
\varrho(\e) e^{-N\varsigma (\e)}.
\end{equation}

\item[{\rm (b)}]
Assume  in addition:
{\rm (iv)} The condition of part {\rm (b)} in Theorem {\rm \ref{th-consist}} holds and w.p.1 for $N$ sufficiently large,
\begin{equation}\label{eq:int1}
\mathcal{S}^*\cap {\rm cl}\big({\rm bd}(\X) \cap {\rm int}(\bar{\X}_N)\big) =\emptyset.
\end{equation}
{\rm (v)}  $\phi(\cdot)$ has the following strong monotonicity property for every $x^*\in \s^*$:
\begin{equation}\label{exp-2}
  (x-x^*)^\top (\phi(x)-\phi(x^*)) \ge g(\|x-x^*\|),\;\forall x\in \X,
\end{equation}
where $g:\bbr_+\to \bbr_+$ is such a function  that function  $\crr(\tau):=g(\tau)/\tau$ is monotonically increasing  for $\tau>0$.

 Then $\mathcal{S}^*=\{x^*\}$ is a singleton and for any sufficiently small $\e>0$, there exists $N$ sufficiently large  such that
\begin{equation}
\label{3b}
\bbp \left\{
\bbd(\hat{\mathcal{S}}_N, \mathcal{S}^*)\geq\e \right\} \leq
 \varrho\left(\crr^{-1}(\e)\right) \exp\left (-N\varsigma \big(\crr^{-1}(\e)\big)\right),
\end{equation}
where $\varrho(\cdot)$ and $\varsigma (\cdot)$ are defined in \eqref{3a}, and $\crr^{-1}(\e):=\inf\{\tau>0:\crr(\tau)\ge \e\}$ is the  inverse of $\crr(\tau)$.
\end{itemize}
\end{theorem}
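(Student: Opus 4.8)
\emph{Part~(a)} will be obtained from a uniform large deviations theorem (cf.~\cite{SDR,shaxu,Xu10A}), so the task reduces to checking its hypotheses. The set $\X$ is compact, being a closed subset --- an intersection of the closed sets $\bar{\X}(\xi)$ --- of the compact set $X$. By Proposition~\ref{p:ylipxxi}, $\hat{y}(\cdot,\xi)$ is Lipschitz on $\X$ with constant $\kappa(\xi)$, hence $\hat{\Phi}(\cdot,\xi)=\Phi(\cdot,\hat{y}(\cdot,\xi),\xi)$ is Lipschitz on $\X$ with modulus $L(\xi):=\kappa_\Phi(\xi)+\kappa_\Phi(\xi)\kappa(\xi)$; by Assumption~\ref{A:moment} the moment generating function $M_\kappa(t)$ of $L(\xi)-\bbe[L(\xi)]$ and the functions $M^i_x(t)$ are finite near $0$, so $\bbe[L(\xi)]<\infty$ and, by Lemma~\ref{l:LIP}, $\phi=\bbe[\hat{\Phi}(\cdot,\xi)]$ is finite and Lipschitz on $\X$. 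With these facts the usual argument runs: given $\e>0$, cover $\X$ by finitely many balls $x_k+r_\e\B$ with $r_\e$ a suitable multiple of $\e$; for $x\in x_k+r_\e\B$,
\[
\big\|\hat{\phi}_N(x)-\phi(x)\big\|\le\big\|\hat{\phi}_N(x_k)-\phi(x_k)\big\|+\big(\hat{L}_N+\bbe[L(\xi)]\big)r_\e,\qquad\hat{L}_N:=N^{-1}\textstyle\sum_{j=1}^N L(\xi^j);
\]
bound $\bbp\{\|\hat{\phi}_N(x_k)-\phi(x_k)\|\ge\e/2\}$ and the fluctuation of $\hat{L}_N$ about $\bbe[L(\xi)]$ by Cram\'er's inequality, and sum over the finitely many centres; this yields constants $\varrho(\e),\varsigma(\e)$ independent of $N$ with \eqref{3a}.

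For \emph{part~(b)}, I first check that $\s^*$ is a singleton. If $x_1^*,x_2^*\in\s^*$ then $-\phi(x_i^*)\in\Gamma_1(x_i^*)$, so monotonicity of $\Gamma_1$ gives $(x_1^*-x_2^*)^\top(\phi(x_1^*)-\phi(x_2^*))\le0$, while \eqref{exp-2} (with $x=x_1^*$, $x^*=x_2^*$) gives the reverse bound $\ge g(\|x_1^*-x_2^*\|)\ge0$; thus $g(\|x_1^*-x_2^*\|)=0$, and since $\crr(\tau)=g(\tau)/\tau$ is nonnegative and strictly increasing on $(0,\infty)$, hence positive there, we get $x_1^*=x_2^*$. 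Write $\s^*=\{x^*\}$.

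For the quantitative estimate, put $S_N:=\sup_{x\in\X}\|\hat{\phi}_N(x)-\phi(x)\|$; by part~(a), $S_N\to0$ w.p.1. Theorem~\ref{th-consist}(b) applies here (its condition~(vi) is hypothesis~(iv), the other hypotheses following from (i)--(iii)), so w.p.1 for $N$ large $\bbd(\hat{\s}_N,\s^*)\le\tau+\R^{-1}(S_N)$ for any prescribed $\tau>0$; since $\R^{-1}(t)\downarrow0$ as $t\downarrow0$ this already forces $\bbd(\hat{\s}_N,\s^*)\to0$, hence $\hat{x}_N\to x^*$ for every selection $\hat{x}_N\in\hat{\s}_N$. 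The point now is that $\R(\e)\ge\crr(\e)$: for $x\in\X$ with $\|x-x^*\|\ge\e$ and $w\in\phi(x)+\Gamma_1(x)$, monotonicity of $\Gamma_1$ (with $-\phi(x^*)\in\Gamma_1(x^*)$) and \eqref{exp-2} give $(x-x^*)^\top w\ge(x-x^*)^\top(\phi(x)-\phi(x^*))\ge g(\|x-x^*\|)$, so $\|w\|\ge g(\|x-x^*\|)/\|x-x^*\|=\crr(\|x-x^*\|)\ge\crr(\e)$, whence $d(0,\phi(x)+\Gamma_1(x))\ge\crr(\e)$; therefore $\R^{-1}\le\crr^{-1}$ and $\bbd(\hat{\s}_N,\s^*)\le\tau+\crr^{-1}(S_N)$. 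It remains to dispose of $\tau$, which is the role of \eqref{eq:int1}: it keeps $x^*$ away from those boundary points of $\X$ lying in ${\rm int}(\bar{\X}_N)$, and combined with $\hat{x}_N\to x^*$ and $\X\subseteq\bar{\X}_N$ (true w.p.1, as $\X=\cap_{\xi\in\Xi\setminus\Upsilon}\bar{\X}(\xi)$ with $P(\Upsilon)=0$) it places $\hat{x}_N$ in $\X$ for $N$ large, so the point $z_{\hat{x}_N}\in\X$ in the proof of Theorem~\ref{th-consist}(b) may be taken equal to $\hat{x}_N$, i.e.\ $\tau=0$. Hence $\bbd(\hat{\s}_N,\s^*)\le\crr^{-1}(S_N)$, and \eqref{3b} follows by applying part~(a).

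The main obstacle is this last step of part~(b). Because relatively complete recourse is not assumed, the SAA solution $\hat{x}_N$ a priori lies only in $\bar{\X}_N\supseteq\X$, whereas $\phi$ and the growth property \eqref{exp-2} are defined only on $\X$; bridging the gap --- showing that $\hat{x}_N$, or a point of $\X$ at a distance negligible in the final bound, may be used --- is exactly where \eqref{eq:int1} and condition~(vi) of Theorem~\ref{th-consist} are needed, and is the least automatic part of the argument. Part~(a), by contrast, is routine once Lemma~\ref{l:LIP} and Assumption~\ref{A:moment} are in hand.
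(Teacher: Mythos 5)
Your proposal is correct and follows essentially the same route as the paper: part (a) is the uniform large-deviations bound (the paper simply invokes \cite[Theorem 7.67]{SDR} componentwise, where you re-derive it by the standard covering-plus-Cram\'er argument), and part (b) uses exactly the paper's three ingredients — uniqueness of $x^*$ from monotonicity, the lower bound $\R(\e)\ge\crr(\e)$ obtained from monotonicity of $\Gamma_1$ together with \eqref{exp-2}, and condition \eqref{eq:int1} to force $\hat{\s}_N\subset\X$ for large $N$ so that the $\tau$ term in Theorem \ref{th-consist}(b) can be dropped (the paper makes this last step quantitative via $\bbd(\s^*,\bar{\X}_N\setminus\X)\ge 3\e$ versus $\bbd(\hat{\s}_N,\s^*)\le 2\e$, which is the same idea as your limiting argument).
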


 \begin{proof}
Part (a).
By Lemma \ref{l:LIP}, because of conditions (i) and (ii) and  compactness of $X$, we have  by \cite[Theorem 7.67]{SDR} that  for every $i\in\{1, \dots, n\}$ and $\e>0$ small enough, there
exist  positive constants 
$\varrho_i=\varrho_i(\e)$ and  $\varsigma_i=\varsigma_i(\e)$, independent
of $N$,  such that
\begin{equation*}
\bbp \left \{\sup_{x\in \X}\big |
(\hat{\phi}_N)_i(x)-\phi_i(x)\big|\geq \e \right \} \leq
\varrho_i(\e) e^{-N\varsigma_i (\e)},
\end{equation*}
and hence  \eqref{3a} follows.

Part (b). By condition (iv) we have that
$\bbd(\s^*, \bar{\X}_N\setminus \X)>0$.
Let $\e$ be sufficiently small such that w.p.1 for $N$ sufficiently large,
$$
\bbd(\s^*, \bar{\X}_N\setminus  \X)\geq 3\e.
$$
Note that since $\X\subseteq \bar{\X}_{N+1} \subseteq\bar{\X}_N$, $\bbd(\s^*, \bar{\X}_N\setminus\X)$ is nondecreasing with $N\to\infty$.

By  Theorem \ref{th-consist}, part (b), w.p.1 for $N$ sufficiently large such that $\tau\leq \e$, we have
$$
\R^{-1}\left( \sup_{x\in \X}\|\hat{\phi}_N(x)-\phi(x)\|\right)\leq \e
$$
and
\begin{equation*}
 \bbd(\hat{\s}_N,\s^*) \le \tau + \R^{-1}\left( \sup_{x\in \X}\|\hat{\phi}_N(x)-\phi(x)\|\right)\leq 2\e.
\end{equation*}
Since by condition (iv), when $N$ sufficiently large w.p.1, for any point $\tilde x\in \bar{\X}_N\setminus \X$, $\bbd(\tilde x, \s^*)\geq 3\e$,  which implies $\hat{\s}_N\subset \X$ and then
\begin{equation}\label{ineqr-2}
 \bbd(\hat{\s}_N,\s^*) \le  \R^{-1}\left( \sup_{x\in \X}\|\hat{\phi}_N(x)-\phi(x)\|\right).
\end{equation}
In order to use \eqref{ineqr-2} to derive an exponential rate of convergence  of the SAA estimators we need an upper bound for $\R^{-1}(t)$, or equivalently a lower bound for $\R(\e)$.
Note that because of the monotonicity assumptions we have that  $\s^*=\{x^*\}$.

For  $x\in \X$ and $z\in \Gamma_1(x)$  we have
 \[
 (x-x^*)^\top(\phi(x)-\phi(x^*))=( x-x^*)^\top(\phi(x)+z -\phi(x^*)-z)\le ( x-x^*)^\top(\phi(x)+z),
 \]
 where the last inequality holds since $-\phi(x^*)\in \Gamma_1(x^*)$ and because of monotonicity of $\Gamma_1$. It follows that
 \[
 (x-x^*)^\top (\phi(x)-\phi(x^*))\le \|x-x^*\|\,\|\phi(x)+z\|,
 \]
and since $z\in \Gamma_1(x)$ was arbitrary that
 \[
 (x-x^*)^\top (\phi(x)-\phi(x^*))\le \|x-x^*\|\,d\big (0,\phi(x)+\Gamma_1(x)\big).
 \]
 Together with \eqref{exp-2} this implies
 \begin{equation*}
  d \big (0,\phi(x)+\Gamma_1(x)\big)\ge \crr(\|x-x^*\|).
\end{equation*}
It follows that $\R(\e)\ge \crr(\e)$, $\e\ge 0$, and hence
\begin{equation*}
 \R^{-1}(t)\le \crr^{-1}(t),
\end{equation*}
where $\crr^{-1}(\cdot)$ is the inverse of function $\crr(\cdot)$.
Then by \eqref{3a}, \eqref{3b} holds.
\end{proof}

Note that  if $g(\tau):=c\, \tau^\alpha$ for some constants $c>0$ and $\alpha>1$,    then $\crr^{-1}(t)=(t/c)^{1/(\alpha-1)}$. In particular for $\alpha=2$, condition \eqref{exp-2} assumes strong monotonicity of $\phi(\cdot)$. Note also that condition (iv) is not needed if the relatively complete recourse condition holds.

It is interesting to consider how strong  condition \eqref{eq:int1} is. Note that when $\mathcal{S}^*\subset {\rm int}(\X)$, condition \eqref{eq:int1} holds. Moreover,
we can also see  from the following simple example that even when $\mathcal{S}^*\cap {\rm bd}(\X)\neq \emptyset$,  condition \eqref{eq:int1} may still hold.
\begin{example}
Consider a two-stage SLCP
\begin{eqnarray*}
&&0\le  \begin{pmatrix}
x_1\\
x_2
\end{pmatrix}\perp  \begin{pmatrix}
1 & 0\\
0 & 1
\end{pmatrix}\begin{pmatrix}
x_1\\
x_2
\end{pmatrix} + \begin{pmatrix}
\bbe[y_1(\xi)]\\
\bbe[y_2(\xi)]
\end{pmatrix} \ge 0, \\
&&0\le \begin{pmatrix}
y_1(\xi)\\
y_2(\xi)
\end{pmatrix}
 \perp  \begin{pmatrix}
\alpha(x_1,\xi) & 0\\
0 &\alpha(x_2,\xi)
\end{pmatrix}\begin{pmatrix}
y_1(\xi)\\
y_2(\xi)
\end{pmatrix}- \begin{pmatrix}
x_1\\
x_2
\end{pmatrix}\ge 0,\;  \mbox{a.e. } \xi\in\Xi,
\end{eqnarray*}
where
$$
\alpha(t, \xi) =
\left\{
\begin{matrix}
\frac{1}{t+\xi+51}, & \mbox{if }\; t+\xi \leq 100,\\
0, & \mbox{otherwise},
\end{matrix}
\right.
$$
and $\xi$ follows uniform distribution in $[-50, 50]$.

By simple calculation, we have that $\mathcal{S}^*=\{(0,0)\}$ and $\X=[0,50]\times [0, 50]$. Moreover, consider an iid samples $\{\xi^j\}_{j=1}^N$ with $\max_{j}\xi^j =49$, $\bar{\X}_N=[0, 51]\times[0,51]$. Let $X=\{x: 0\leq x_1, x_2 \leq 100\}$. It is easy to observe that although $\mathcal{S}^*=\{(0,0)\}$ is at the boundary of $\X\cap X$, condition \eqref{eq:int1} still holds.
\end{example}

\begin{remark}
{\rm
It is also interesting to estimate the required  sample size of the  SAA problem for the two-stage SGE. Similar  to a discussion in \cite[p.410]{shaxu}, if there exists a positive constant $\sigma>0$ such that
\begin{equation}\label{eq:mx}
M^i_x(t)\leq {\rm exp}\{\sigma^2t^2/2\}, \;\; \forall t\in\bbr,\;i=1,...,n,
\end{equation}
then it can be  verified that  $I^i_x(z)\geq \frac{z^2}{2\sigma^2}$ for all $z\in\bbr$, where
$
I^i_x(z) := \sup_{t\in\bbr} \{zt-\log M^i_x(t)\}
$
is the large deviations rate function of random variable $\hat{\Phi}_i(x,\xi)-\phi_i(x)$, $i=1, \cdots, n$. Note that if  $\hat{\Phi}_i(x,\xi)-\phi_i(x)$ is subgaussian random variable, \eqref{eq:mx} holds, $i=1,...,n$. Then it can be  verified that if
\begin{equation*}
N\geq \frac{32n\sigma}{\e^2}\left[ \ln(n(2\Pi+1)) +\ln\left(\frac{1}{\alpha}\right) \right],
\end{equation*}
then
\begin{equation*}
\bbp \left \{\sup_{x\in \X}\big \|
\hat{\phi}_N(x)-\phi(x)\big\|\geq\e\right \} \leq
\alpha,
\end{equation*}
where $\Pi:=\left (O(1)D\bbe[\kappa_\Phi(\xi)+ \kappa_\Phi(\xi)\kappa(\xi)]/ \e\right )^n$ and  $D$ is the diameter of $X$. Consequently it follows by \eqref{ineqr-2} that   if
\begin{equation*}
N\geq \frac{32n\sigma}{(\crr^{-1}(\e))^2}\left[ \ln(n(2\hat{\Pi}+1)) +\ln\left(\frac{1}{\alpha}\right) \right],
\end{equation*}
with $\hat{\Pi}:=\left(O(1)D\bbe[\kappa_\Phi(\xi)+ \kappa_\Phi(\xi)\kappa(\xi)]/ \crr^{-1}(\e)\right)^n$, then  we have
\begin{equation*}
\bbp \left \{
\bbd(\hat{\s}_N, \s^*) \geq\e\right \} \leq
\alpha.
\end{equation*}
}
\end{remark}

In the next  section, we will verify the conditions of Theorems \ref{th-consist} and \ref{t:expge} for the  two-stage SVI-NCP under moderate assumptions. 

\section{Two-stage  SVI-NCP and its SAA problem}
\label{sec-svi}
\setcounter{equation}{0}

In this section, we investigate convergence  properties of the  two-stage SGE \eqref{geq-1}--\eqref{geq-2}   when $\Phi(x, y, \xi)$ and $\Psi(x, y, \xi)$ are continuously differentiable w.r.t. $(x, y)$ for a.e. $\xi\in \Xi$ and $\Gamma_1(x):=\N_{C}(x)$ and $\Gamma_2(y):=\N_{\bbr^m_+}(y)$ with $C\subseteq \bbr^n$ being  a nonempty, polyhedral, convex set.  That is, we consider  the mixed two-stage SVI-NCP
 \begin{eqnarray}
\label{geq-SNCP-1}
&&0\in \bbe[\Phi(x, y(\xi), \xi)] + \N_C(x), \\
\label{geq-SNCP-2}
&&0\leq y(\xi)\perp \Psi(x, y(\xi), \xi) \geq0, \;\;  \mbox{ for a.e.}\; \xi\in\Xi,
\end{eqnarray}
and study  convergence analysis of its SAA problem
\begin{eqnarray}
\label{geq-vi-1-saa}
&&0 \in N^{-1}\sum_{j=1}^N \Phi(x, y(\xi^j), \xi^j) + \N_C(x), \\
\label{geq-vi-2-saa}
&&0\leq y(\xi^j) \perp \Psi(x, y(\xi^j), \xi^j)\geq 0, \;\;  j=1,...,N.
\end{eqnarray}

We first give some required  definitions.
Let $\Y$ be the space of measurable functions $u: \Xi \to \bbr^m$ with finite value of $\int \|u(\xi)\|^2P(d\xi)$ and  $\langle\cdot,\cdot\rangle$ denotes
the scalar product in the Hilbert space
$\bbr^n\times \Y$
equipped with ${\cal L}_2$-norm,
that is, for $x,z\in \bbr^n$ and $y,u\in \Y$,
$$
\langle (x,y), (z,u)\rangle := x^\top z+ \int_\Xi y(\xi)^\top u(\xi) P(d\xi).
$$
Consider mapping $\mathcal{G}:\bbr^n\times \Y\to \bbr^n\times \Y$ defined as
$$
\mathcal{G}(x, y(\cdot)):=\big(\bbe[\Phi(x, y(\xi), \xi)],\Psi(x, y(\cdot), \cdot)\big).
$$
Monotonicity properties  of this mapping are defined in the usual way. In particular
the mapping $\mathcal{G}$ is said to be  strongly monotone if there exists a positive number $\bar{\kappa}$ such that for any $(x, y(\cdot)), (z, u(\cdot)) \in \bbr^n\times \Y$, we have
$$
\left\langle \mathcal{G}(x, y(\cdot)) -\mathcal{G}(z, u(\cdot)), \begin{pmatrix}
x-z\\
y(\cdot) - u(\cdot)
\end{pmatrix} \right\rangle \geq \bar{\kappa}(\|x-z\|^2 +\bbe[ \|y(\xi) - u(\xi)\|^2]) . 
$$

\begin{definition}{\rm (\cite[Definition 12.1]{HaKoSc05})}
The mapping $\mathcal{G}: \bbr^n\times \Y \to \bbr^n\times \Y$ is hemicontinuous on $\bbr^n\times \Y$ if $\mathcal{G}$ is continuous on line segments in $\bbr^n\times \Y$, i.e., for every pair of points $(x, y(\cdot)), (z, u(\cdot))\in \bbr^n\times \Y$, the following function is continuous
$$
t \mapsto \left\langle \mathcal{G}(tx + (1-t)z, ty(\cdot) + (1-t)u(\cdot)), \begin{pmatrix}
x-z\\
y(\cdot) - u(\cdot)
\end{pmatrix} \right\rangle.
$$
\end{definition}

\begin{definition}{\rm (\cite[Definition 12.3 (i)]{HaKoSc05})}
The mapping $\mathcal{G}: \bbr^n\times \Y \to \bbr^n\times \Y$ is coercive if there exists $(x_0, y_0(\cdot))\in \bbr^n\times \Y$ such that
$$
\frac{\left\langle\mathcal{G}(x, y(\cdot)),  \begin{pmatrix}
x-x_0\\
y(\cdot) - y_0(\cdot)
\end{pmatrix}\right\rangle}{\|x-x_0\| +\bbe[ \|y(\xi) - y_0(\xi)\|]} \to \infty \;\;\inmat{as}\;\; \|x\| + \bbe[\|y(\xi)\|]\to\infty \;\;\inmat{and}\; (x, y(\cdot))\in \bbr^n\times \Y.
$$
\end{definition}

Note that the strong monotonicity of $\mathcal{G}$ implies the coerciveness of $\mathcal{G}$, see \cite[Chapter 12]{HaKoSc05}. In section 3.1,
we  consider the properties in the second stage SNCP.

\subsection{Lipschitz properties of the second stage solution mapping}

Strong regularity of VI was investigated in Dontchev and Rockafellar \cite{DoRo96}. 
We apply their results to the second stage SNCP.
Consider a linear VI
\begin{equation}\label{eq:linearVI}
0\in Hz + q + \N_U(z),
\end{equation}
where $U$ is a closed nonempty, polyhedral, convex subset of $\bbr^l$.

\begin{definition} {\rm \cite[Definition 2]{DoRo96}}
The critical face condition is said to hold at  $(q_0, z_0)$ if for any choice of  faces $F_1$ and $F_2$ of the critical cone $\C_0$ with $F_2\subset F_1$,
$$
u\in F_1- F_2, \;\; H^\top u\in (F_1 - F_2)^* \; \Longrightarrow \; u=0,
$$
where critical cone $\C_0 = \C(z_0, v_0) := \{z'\in \T_U(x) :z' \perp  v_0\}$ with $v_0 = Hz_0 +q_0$. 
\end{definition}

\begin{theorem}{\rm \cite[Theorem 2]{DoRo96}}
The linear variational inequality \eqref{eq:linearVI} is strongly regular at $(q_0, z_0)$ if and only if the critical face condition holds at $(q_0, z_0)$, where $z_0$ is the solution of the linear VI:
$
0\in Hz + q_0 + \N_U(z).
$
\end{theorem}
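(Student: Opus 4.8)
The plan is to reduce this characterization to a purely combinatorial/linear-algebraic statement about the normal map of the critical cone $\C_0$, exploiting the polyhedrality of $U$ at every stage. First I would pass from the VI \eqref{eq:linearVI} to its normal map: $z$ solves $0\in Hz+q+\N_U(z)$ if and only if $z=\Pi_U(x)$ for some $x$ with $H\Pi_U(x)+(x-\Pi_U(x))+q=0$, where $\Pi_U$ is the Euclidean projection onto $U$. Since $U$ is polyhedral, $\Pi_U$ is globally piecewise affine, so this normal map is piecewise affine, with $z_0=\Pi_U(x_0)$ for $x_0:=z_0+v_0$ and $v_0=Hz_0+q_0\in-\N_U(z_0)$. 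Strong regularity of \eqref{eq:linearVI} at $(q_0,z_0)$ is equivalent to this normal map having a single-valued Lipschitz localization around $x_0$, i.e.\ being a local Lipschitz homeomorphism at $x_0$.

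Next I would localize at $x_0$. Because the normal map is piecewise affine, its germ at $x_0$ is, up to translation, the homogeneous piecewise linear map $M(x):=H\Pi_{\C_0}(x)+(x-\Pi_{\C_0}(x))$, where $\C_0=\C(z_0,v_0)=\{z'\in\T_U(z_0):z'\perp v_0\}$ is the critical cone; this is Robinson's reduction, which uses the polyhedral fact that $\T_U$ and $\Pi_U$ ``linearize'' along $\C_0$ in a neighborhood of $z_0$. So strong regularity holds at $(q_0,z_0)$ exactly when $M$ is a local homeomorphism of $\bbr^l$ at the origin. Here I would invoke the theory of piecewise linear maps (coherent orientation, Scholtes/Ralph): a piecewise linear map that is a local homeomorphism at $0$ is automatically a global Lipschitzian homeomorphism of $\bbr^l$, and conversely; moreover for such maps local injectivity already forces the homeomorphism property. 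Hence strong regularity $\iff$ $M$ is injective on $\bbr^l$.

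Finally I would run the cell-by-cell injectivity analysis of $M$ on the normal manifold of $\C_0$. Each full-dimensional cell of this subdivision is attached to a face $F$ of $\C_0$, on it $\Pi_{\C_0}$ is the orthogonal projection onto $\mathrm{span}(F)$ (restricted suitably), and $M$ is linear there; two cells are adjacent precisely when their faces are nested, $F_2\subset F_1$. Injectivity of a piecewise linear map can be decided by ruling out ``folding'' across adjacent pairs and then propagating, and a direct computation shows the two linear pieces attached to $F_2\subset F_1$ agree on their common boundary and fail to be jointly injective exactly when there is a nonzero $u\in F_1-F_2$ with $H^\top u\in(F_1-F_2)^*$. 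Thus $M$ is injective on $\bbr^l$ iff the critical face condition holds at $(q_0,z_0)$, and combining this with the two previous reductions yields both implications of the theorem.

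The main obstacle is this last step: identifying the precise cell / normal-manifold structure of $\Pi_{\C_0}$ for a general polyhedral cone (not necessarily pointed or simplicial), showing that testing adjacent cell pairs suffices for global injectivity, and extracting the subspace $F_1-F_2$ together with the dual cone $(F_1-F_2)^*$ as the exact obstruction. A secondary subtlety is the local-to-global step: one must ensure the piecewise linear normal map is coherently oriented so that local invertibility upgrades to a global Lipschitz homeomorphism, which for piecewise linear maps follows from injectivity but must be handled with care.
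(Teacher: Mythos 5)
The paper offers no proof of this statement at all: it is imported verbatim, with citation, from Dontchev and Rockafellar \cite[Theorem 2]{DoRo96}, so there is no internal argument to compare yours against. What you have written is a roadmap of the actual proof in the cited source: pass to Robinson's normal map $x\mapsto H\Pi_U(x)+x-\Pi_U(x)+q$, use the polyhedral reduction lemma to replace $U$ by the critical cone $\C_0$, observe that strong regularity at $(q_0,z_0)$ becomes the question of whether the positively homogeneous piecewise linear map $M(x)=H\Pi_{\C_0}(x)+x-\Pi_{\C_0}(x)$ is a global homeomorphism of $\bbr^l$, and then decide this combinatorially on the normal manifold of $\C_0$. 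The reductions you describe are sound, and for a positively homogeneous piecewise linear self-map of $\bbr^l$ the equivalence of local homeomorphism at $0$, global homeomorphism, and injectivity does hold (invariance of domain plus homogeneity).

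The gaps are exactly where you say they are, and they are not small. First, $F_1-F_2$ is in general a \emph{cone}, not a subspace ($F_1$ and $F_2$ are faces of the cone $\C_0$), so $(F_1-F_2)^*$ is a polar cone and the test $H^\top u\in(F_1-F_2)^*$ is a system of inequalities; the ``fold across adjacent cells'' computation you propose must be formulated accordingly, and the critical face condition quantifies over \emph{all} nested pairs of faces, not only those whose cells share a facet. Second, pairwise injectivity across adjacent full-dimensional cells does not, for a general piecewise linear map, imply global injectivity; the cited proof instead establishes the equivalence of the critical face condition with coherent orientation of the linear pieces $HP_F+(I-P_F)$ ($P_F$ the orthogonal projector onto ${\rm span}\,F$, $F$ a face of $\C_0$), and then invokes Robinson's theorem that a coherently oriented polyhedral normal map is a global Lipschitz homeomorphism. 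Since your proposal names these two steps as open obstacles rather than resolving them, it is a correct outline of the known argument but not yet a proof.
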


\begin{cor}\label{c:Amatrix} {\rm \cite[Corollary 1]{DoRo96}}
A sufficient condition for strong regularity of the linear variational inequality \eqref{eq:linearVI} at $(q_0, z_0)$ is that $u^\top Hu>0$ for all vectors $u\neq 0$ in the subspace spanned by the critical cone $\C_0$.
\end{cor}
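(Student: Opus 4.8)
The plan is to derive this from the characterization of strong regularity in terms of the critical face condition stated in the preceding theorem (\cite[Theorem 2]{DoRo96}): it suffices to show that the hypothesis ``$u^\top Hu>0$ for all nonzero $u$ in the subspace spanned by $\C_0$'' implies that the critical face condition holds at $(q_0,z_0)$, and then invoke that theorem to conclude strong regularity of \eqref{eq:linearVI}.

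So let $F_1$ and $F_2$ be faces of the critical cone $\C_0$ with $F_2\subset F_1$, and suppose $u\in F_1-F_2$ satisfies $H^\top u\in (F_1-F_2)^*$; we must show $u=0$. First I would observe that, since $F_1$ and $F_2$ are subsets of $\C_0$ and the span of $\C_0$ is a linear subspace, the difference set $F_1-F_2$ is contained in that span; in particular $u$ lies in the span of $\C_0$, so the positive-definiteness hypothesis applies to $u$. Next, using the (negative) dual-cone convention in force in the paper, namely $C^*=\{x^*:x^\top x^*\le 0,\ \forall x\in C\}$, the membership $H^\top u\in (F_1-F_2)^*$ means $(H^\top u)^\top v\le 0$ for every $v\in F_1-F_2$. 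Taking $v=u$, which is legitimate precisely because $u\in F_1-F_2$, yields $u^\top Hu=(H^\top u)^\top u\le 0$ (a $1\times1$ quantity equals its transpose). Combining this with the hypothesis that $u^\top Hu>0$ whenever $u\neq0$ lies in the span of $\C_0$ forces $u=0$, which is exactly the critical face condition. Applying the characterization theorem then finishes the argument.

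There is essentially no deep obstacle here; the work is bookkeeping. The two points that genuinely require care are: (i) verifying that $F_1-F_2$ is contained in the span of $\C_0$, so that the vector $u$ produced by the critical face condition is one to which the positive-definiteness hypothesis actually applies; and (ii) keeping the sign convention of the dual cone consistent throughout, so that membership of $H^\top u$ in $(F_1-F_2)^*$ delivers the inequality $u^\top Hu\le 0$ with the correct sign rather than its reverse. One could alternatively bypass the characterization theorem and argue directly, showing that the associated normal map is a global homeomorphism near $(q_0,z_0)$ via a degree or monotonicity argument, but routing through the already-stated theorem is shorter and fully self-contained given the excerpt.
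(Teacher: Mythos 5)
The paper states this corollary purely as a citation of \cite[Corollary 1]{DoRo96} and supplies no proof of its own, so there is no internal argument to compare against; your derivation of it from the preceding critical face characterization is correct and is precisely the standard route (the same one used in the cited reference). The two points you single out --- that $F_1-F_2\subseteq \mathrm{span}(\C_0)$ because $F_1,F_2\subseteq\C_0$ and $-F_2$ is again a cone, and that the negative-polar convention turns $H^\top u\in(F_1-F_2)^*$ into $u^\top Hu\le 0$ upon testing against $v=u\in F_1-F_2$ --- are indeed the only places requiring care, and you handle both correctly.
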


Note that when $H$ is a positive definite matrix, the condition in Corollary \ref{c:Amatrix} holds. Then we apply Corollary \ref{c:Amatrix} to the two-stage SVI-NCP and consider the Clarke generalized Jacobian of $\hat{y}(x, \xi)$. To this end, we introduce some notations: let
$$
\begin{array}{lll}
\alpha(\hat{y}(x, \xi))  = \{i:(\hat{y}(x, \xi))_i>(\Psi(x, \hat{y}(x, \xi), \xi))_i \} \\
\beta(\hat{y}(x, \xi))  = \{i: (\hat{y}(x, \xi))_i=(\Psi(x, \hat{y}(x, \xi), \xi))_i \}\\
\gamma(\hat{y}(x, \xi))  = \{i: (\hat{y}(x, \xi))_i<(\Psi(x, \hat{y}(x, \xi), \xi))_i \},
\end{array}
$$
$
\nabla_x \Psi(x, y, \xi) = \begin{pmatrix}
\nabla_x \Psi_\alpha(x, y, \xi) \\
\nabla_x \Psi_\beta(x, y, \xi) \\
\nabla_x \Psi_\gamma(x, y, \xi)
\end{pmatrix}
$
be the Jacobian of $\Psi(x, y, \xi)$ w.r.t. $x$ for given $y$ and $\xi$ and
$$
\nabla_y\Psi(x, y, \xi) = \begin{pmatrix}
\nabla_y\Psi_{\alpha\alpha}(x, y, \xi) & \nabla_y\Psi_{\alpha\beta}(x, y, \xi) & \nabla_y\Psi_{\alpha\gamma}(x, y, \xi)\\
\nabla_y\Psi_{\beta\alpha}(x, y, \xi) &  \nabla_y\Psi_{\beta\beta}(x, y, \xi) & \nabla_y\Psi_{\beta\gamma}(x, y, \xi)\\
\nabla_y\Psi_{\gamma\alpha}(x, y, \xi) & \nabla_y\Psi_{\gamma\beta}(x, y, \xi) & \nabla_y\Psi_{\gamma\gamma}(x, y, \xi)
\end{pmatrix}
$$
be the Jacobian of $\Psi(x, y, \xi)$ w.r.t. $y$ for given $x$ and $\xi$, where the submatrix\linebreak  $\nabla_x\Psi_{\alpha}(x, y, \xi)$ is a matrix with elements $\partial \Psi_{i}(x, y, \xi)/\partial x_j$, $i\in \alpha$, $j\in\{1, \cdots,n\}$ and the submatrix $\nabla_y\Psi_{\alpha\alpha}(x, y, \xi)$ is a matrix with elements $\partial \Psi_{i}(x, y, \xi)/\partial y_j$, $i,j\in \alpha$.

\begin{assu}\label{A:strongly-monotoney}
For a.e. $\xi\in\Xi$ and all $x\in \X\cap C$,
$
\Psi(x, \cdot, \xi)
$
 is strongly monotone, that is there exists a positive valued measurable $\kappa_y(\xi)$ such that for all $y,u\in \bbr^m$,
$$
\left\langle \Psi(x, y, \xi) -\Psi(x, u, \xi), y - u \right\rangle \geq \kappa_y(\xi)\| y -  u\|^2
$$
with $\bbe[\kappa_y(\xi)] < +\infty$.
\end{assu}

Applying Corollary 2.1 in \cite{Ky90} to the second stage of the  SVI-NCP, we have the following lemma.

\begin{lemma}\label{L:yFdifferentiable}
Suppose Assumption  \ref{A:strongly-monotoney} holds and for a fixed $\bar{\xi}\in\Xi$,  $\Psi(x, y, \xi)$ is continuously differentiable w.r.t. $(x, y)$.   
Then  for the fixed $\bar{\xi}\in\Xi$, (a) $\hat{y}(x, \bar{\xi})$ is an unique solution of the second stage NCP \eqref{geq-SNCP-2}, (b) $\hat{y}(x, \bar{\xi})$ is F-differentiable at $\bar{x}\in\X\cap C$ if and only if $\beta(\hat{y}(\bar{x}, \bar{\xi}))$ is empty and
\begin{equation*}
(\nabla_x\hat{y}(\bar{x}, \xi))_\alpha = - (\nabla_y\Psi_{\alpha\alpha}(\bar{x}, \hat{y}(\bar{x}, \xi), \xi))^{-1}\nabla_x\Psi_\alpha(\bar{x}, \hat{y}(\bar{x}, \xi), \xi), \;\; (\nabla_x\hat{y}(\bar{x}, \xi))_\gamma = 0
\end{equation*}
or
\begin{equation*}
\nabla_x\Psi_\beta(\bar{x}, \hat{y}(\bar{x}, \xi), \xi) = \nabla_y\Psi_{\beta\alpha}(\bar{x}, \hat{y}(\bar{x}, \xi), \xi)(\nabla_y\Psi_{\alpha\alpha}(\bar{x}, \hat{y}(\bar{x}, \xi), \xi))^{-1}\nabla_x\Psi_\alpha(\bar{x}, \hat{y}(\bar{x}, \xi), \xi)
\end{equation*}
in this case, the F-derivative of $\hat{y}(\cdot, \xi)$ at $\bar{x}$ is given by
\begin{eqnarray*}
&(\nabla_x\hat{y}(\bar{x}, \xi))_\alpha = - (\nabla_y\Psi_{\alpha\alpha}(\bar{x}, \hat{y}(\bar{x}, \xi), \xi))^{-1}\nabla_x\Psi_\alpha(\bar{x}, \hat{y}(\bar{x}, \xi), \xi), \\
&(\nabla_x\hat{y}(\bar{x}, \xi))_\beta = 0, \;\; (\nabla_x\hat{y}(\bar{x}, \xi))_\gamma = 0.
\end{eqnarray*}
\end{lemma}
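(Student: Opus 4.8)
The plan is to handle (a) with classical variational-inequality theory and (b) by specializing the sensitivity result \cite[Corollary~2.1]{Ky90} once its hypotheses have been verified in the present notation.

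For (a), I would first rewrite the second-stage complementarity condition \eqref{geq-SNCP-2}, for a fixed $x$ and the fixed $\bar\xi$, as the variational inequality of finding $y\in\bbr^m_+$ with $\langle \Psi(x,y,\bar\xi),\,y'-y\rangle\ge 0$ for all $y'\in\bbr^m_+$. The set $\bbr^m_+$ is closed and convex, and $\Psi(x,\cdot,\bar\xi)$ is continuous (being continuously differentiable) and strongly monotone by Assumption~\ref{A:strongly-monotoney}; hence the standard existence-and-uniqueness theorem for strongly monotone variational inequalities (e.g.\ \cite[Chapter~12]{HaKoSc05}) shows that this VI, and therefore the NCP \eqref{geq-SNCP-2}, has exactly one solution $\hat y(x,\bar\xi)$ for every $x\in\X\cap C$. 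That is (a).

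For (b), the first step is to extract a consequence of Assumption~\ref{A:strongly-monotoney}: since $\Psi(x,\cdot,\bar\xi)$ is $C^1$ and strongly monotone with modulus $\kappa_y(\bar\xi)>0$, its Jacobian satisfies $u^\top\nabla_y\Psi(x,y,\bar\xi)u\ge\kappa_y(\bar\xi)\|u\|^2$ for all $u$, so $\nabla_y\Psi(x,y,\bar\xi)$ and each of its principal submatrices are positive definite and invertible; in particular $\nabla_y\Psi_{\alpha\alpha}(\bar x,\hat y(\bar x,\bar\xi),\bar\xi)$ is nonsingular, which makes the stated formulas well posed, and by Corollary~\ref{c:Amatrix} the partially linearized problem at $\hat y(\bar x,\bar\xi)$ is strongly regular. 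The second step is to view $x\mapsto\hat y(x,\bar\xi)$ as the solution map of the parametric NCP $0\le y\perp\Psi(x,y,\bar\xi)\ge 0$ near $x=\bar x$: the index sets $\alpha,\beta,\gamma$ are precisely the strict-complementarity partition at the base point, and the positive definiteness just established is exactly the regularity hypothesis of \cite[Corollary~2.1]{Ky90}. Invoking that corollary then yields the claimed dichotomy — F-differentiability at $\bar x$ holds if and only if $\beta$ is empty, or else if and only if the compatibility relation $\nabla_x\Psi_\beta=\nabla_y\Psi_{\beta\alpha}(\nabla_y\Psi_{\alpha\alpha})^{-1}\nabla_x\Psi_\alpha$ (evaluated at $(\bar x,\hat y(\bar x,\bar\xi),\bar\xi)$) holds — and, in the differentiable case, produces the derivative formula by differentiating the locally active system $\Psi_\alpha(\cdot,\hat y(\cdot,\bar\xi),\bar\xi)\equiv 0$, $\hat y_\gamma(\cdot,\bar\xi)\equiv 0$ and setting $\nabla_x\hat y_\beta=0$.

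The only genuinely delicate point — and where I would be most careful — is the degenerate set $\beta$. When $\beta\neq\emptyset$ the solution map is in general merely piecewise $C^1$ (B-differentiable), with one-sided directional derivatives governed by which $\beta$-indices become active; F-differentiability then holds exactly when all these candidate linearizations coincide, which is what the algebraic condition on $\nabla_x\Psi_\beta$ encodes. Proving this equivalence from scratch would be the bulk of the argument, but it is precisely the content of \cite[Corollary~2.1]{Ky90}, so the proof reduces to checking that result's hypotheses — supplied here by the strong monotonicity assumption — and transcribing its conclusion.
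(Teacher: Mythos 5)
Your proposal is correct and follows essentially the same route as the paper: the paper presents this lemma as a direct application of \cite[Corollary 2.1]{Ky90} to the second-stage NCP, with uniqueness coming from strong monotonicity, which is exactly your plan. Your additional verification that Assumption \ref{A:strongly-monotoney} yields positive definiteness of $\nabla_y\Psi$ (hence of $\nabla_y\Psi_{\alpha\alpha}$) is the right way to check Kyparisis's regularity hypothesis, which the paper leaves implicit.
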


\begin{theorem}\label{t:yLipunique}
Let $\Psi: \bbr^n\times \bbr^m\times \Xi\to\bbr^m$ be Lipschitz continuous and continuously differentiable over $\bbr^n\times \bbr^m $ for a.e. $\xi\in\Xi$.
Suppose Assumption \ref{A:strongly-monotoney} holds and $\Phi(x, y, \xi)$ is continuously differentiable w.r.t. $(x, y)$ for a.e. $\xi\in\Xi$. Then for a.e. $\xi\in\Xi$ and $x\in \X$, the following holds.
\begin{itemize}
\item [{\rm (a)}]
 The second stage SNCP \eqref{geq-SNCP-2} has a unique solution $\hat{y}(x, \xi)$ which is parametrically CD-regular  and the mapping $x \mapsto \hat{y}(x, \xi)$ is Lipschitz continuous over $\X\cap X'$, 
where $X'$ is a compact subset of $\bbr^n$.

\item [{\rm (b)}] The Clarke Jacobian of $\hat{y}(x, \xi)$ w.r.t. $x$ is as follows
$$
\hspace{-0.23in}
\begin{array}{lll}
&\partial \hat{y}(x,\xi)= \inmat{conv}\left\{\displaystyle{\lim_{z\to x}} \nabla_z \hat{y}(z,\xi) :  \nabla_z \hat{y}(z, \xi)\right.\\
&\left. \;\;\;\;\;\;\;= -[I - D_{\alpha(\hat{y}(z,\xi))}(I-M(z, \hat{y}(z,\xi), \xi))]^{-1}D_{\alpha(\hat{y}(z,\xi))}L(z, \hat{y}(z,\xi), \xi)
\right\}\\
&\subseteq \inmat{conv}\{-U_J(M(x, \hat{y}(x,\xi),\xi))L(x, \hat{y}(x,\xi),\xi): J\in {\cal J} \},
\end{array}
$$
where $M(x, y, \xi) = \nabla_y \Psi(x, y, \xi)$, $L(x, \hat{y}(x, \xi), \xi) = \nabla_x \Psi(x, \hat{y}(x, \xi), \xi)$, ${\cal J}:=2^{\{1,...,m\}}$, $D_J$ and   $U_J$ are defined in \eqref{eq:Dj} and \eqref{eq:Uj} respectively. 
\end{itemize}
\end{theorem}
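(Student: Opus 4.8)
\textbf{Proof plan for Theorem \ref{t:yLipunique}.}

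The plan is to prove part (a) first and then derive the explicit formula in part (b) by applying Clarke's inverse function theorem to a reformulation of the NCP. For part (a), I would first observe that under Assumption \ref{A:strongly-monotoney} the strong monotonicity of $\Psi(x,\cdot,\xi)$ forces uniqueness of the second stage solution $\hat{y}(x,\xi)$; this is the content of Lemma \ref{L:yFdifferentiable}(a). For existence, I would note that strong monotonicity plus continuity gives coerciveness of the map $y\mapsto\Psi(x,y,\xi)$, so the NCP \eqref{geq-SNCP-2} has a solution (one can invoke the standard existence theory for monotone complementarity problems, or the Hartman--Stampacchia type argument via the framework recorded in Section \ref{sec-svi}). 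To get parametric CD-regularity at a fixed $(\bar x,\xi)$ with solution $\bar y$, I would take any $J\in\pi_y\partial_{(x,y)}\Psi(\bar x,\bar y,\xi)$ and consider the partially linearized VI \eqref{eq:affine}; because $\Psi$ is $C^1$, $J$ is actually $\nabla_y\Psi(\bar x,\bar y,\xi)$ itself, and strong monotonicity of $\Psi(\bar x,\cdot,\xi)$ with modulus $\kappa_y(\xi)$ gives $u^\top J u\ge\kappa_y(\xi)\|u\|^2>0$ for all $u\ne 0$. Hence $u^\top J u>0$ on the subspace spanned by the critical cone, so Corollary \ref{c:Amatrix} yields strong regularity of \eqref{eq:affine}, i.e.\ $\bar y$ is parametrically CD-regular. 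Lipschitz continuity of $x\mapsto\hat y(x,\xi)$ over the compact set $\X\cap X'$ then follows from Proposition \ref{p:ylipxxi} (equivalently \cite[Theorem 4]{Iz13} together with \cite[Theorem 1.1]{DoRo10}).

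For part (b), the plan is to reformulate the NCP through its natural normal map / min-function. Writing $\hat y = \hat y(x,\xi)$, the NCP \eqref{geq-SNCP-2} is equivalent to the nonsmooth equation $\min\{y,\Psi(x,y,\xi)\}=0$, and at any point $z$ near $x$ the active-set partition $\alpha(\hat y(z,\xi))$, $\beta(\hat y(z,\xi))$, $\gamma(\hat y(z,\xi))$ determines a smooth selection of the solution branch. On the set where $\beta$ is empty (a full-measure/dense set of $z$ by the strict complementarity that holds generically under strong monotonicity), Lemma \ref{L:yFdifferentiable}(b) already gives $\nabla_z\hat y(z,\xi)$ block-wise: the $\alpha$-rows equal $-(\nabla_y\Psi_{\alpha\alpha})^{-1}\nabla_x\Psi_\alpha$ and the $\beta$- and $\gamma$-rows vanish. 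I would package this into the single matrix identity
$$
\nabla_z\hat y(z,\xi) = -[I - D_{\alpha(\hat y(z,\xi))}(I-M(z,\hat y(z,\xi),\xi))]^{-1}D_{\alpha(\hat y(z,\xi))}L(z,\hat y(z,\xi),\xi),
$$
where $D_J$ is the diagonal 0/1 selector for index set $J$ (definition \eqref{eq:Dj}) and $M=\nabla_y\Psi$, $L=\nabla_x\Psi$; a direct block computation shows this reduces, on the $\alpha$-block, to the formula of Lemma \ref{L:yFdifferentiable} while annihilating the complementary block, and the matrix being inverted is nonsingular precisely because $\nabla_y\Psi_{\alpha\alpha}$ is positive definite by Assumption \ref{A:strongly-monotoney}. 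The Clarke generalized Jacobian $\partial\hat y(x,\xi)$ is then, by definition, the convex hull of all limits of $\nabla_z\hat y(z,\xi)$ as $z\to x$ over points of differentiability, which is the displayed set. The final inclusion into $\inmat{conv}\{-U_J(M(x,\hat y(x,\xi),\xi))L(x,\hat y(x,\xi),\xi):J\in\J\}$ comes from noting that every limiting active set $\alpha(\hat y(z,\xi))$ as $z\to x$ is one of the subsets $J\in 2^{\{1,\dots,m\}}$ that are ``admissible'' at $x$, and the map $J\mapsto -[I-D_J(I-M)]^{-1}D_J L = -U_J(M)L$ (with $U_J$ as in \eqref{eq:Uj}) is exactly that branch formula evaluated at the limiting data $M(x,\hat y(x,\xi),\xi)$, $L(x,\hat y(x,\xi),\xi)$; enlarging the index set from the admissible ones to all of $\J$ only enlarges the convex hull, giving the stated superset.

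The main obstacle I anticipate is the careful bookkeeping around the index set $\beta(\hat y(x,\xi))$: the clean block formula of Lemma \ref{L:yFdifferentiable}(b) is an ``if and only if'' for F-differentiability and holds only when $\beta$ is empty, so I must argue that (i) the set of $z$ near $x$ with $\beta(\hat y(z,\xi))=\emptyset$ is dense enough that Clarke's Jacobian is the convex hull of the limiting gradients taken only over such $z$ (this uses Rademacher's theorem together with the Lipschitz continuity from part (a)), and (ii) every partition attainable in the limit $z\to x$ is captured by some $J\in\J$, so that no limiting gradient is missed and the inclusion is valid. A secondary technical point is verifying that $[I-D_\alpha(I-M)]$ is invertible along the whole relevant family — this is where positive definiteness of $\nabla_y\Psi_{\alpha\alpha}(x,\hat y(x,\xi),\xi)$, guaranteed uniformly by $\kappa_y(\xi)>0$ in Assumption \ref{A:strongly-monotoney}, does the work, and one should record the uniform bound on $\|[I-D_\alpha(I-M)]^{-1}\|$ so that the Lipschitz modulus $\kappa(\xi)$ of $\hat y(\cdot,\xi)$ is controlled, consistent with the measurability/integrability requirements imposed later in Lemma \ref{l:LIP}.
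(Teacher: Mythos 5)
Your route coincides with the paper's in all essentials: for part (a), uniqueness from Assumption \ref{A:strongly-monotoney} via Lemma \ref{L:yFdifferentiable}(a), strong regularity of the linearized LCP at $\bar{y}$ from positive definiteness of $\nabla_y\Psi(\bar{x},\bar{y},\xi)$ together with Corollary \ref{c:Amatrix} (hence parametric CD-regularity), and the Lipschitz property from \cite[Theorem 4]{Iz13} plus compactness of $X'$; for part (b), Rademacher's theorem, the block formulas of Lemma \ref{L:yFdifferentiable}(b), the unified $D_{\alpha}$/$U_J$ packaging, and the convex-hull inclusion over all $J\in{\cal J}$.

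The one place where your plan would go wrong is the step you yourself flag as the main obstacle. You propose to show that $\{z:\ \beta(\hat{y}(z,\xi))=\emptyset\}$ is dense (or of full measure) near $x$, on the grounds that strict complementarity ``holds generically under strong monotonicity.'' That claim is false in general: take $\Psi(x,y,\xi)=y$, which is strongly monotone in $y$; then $\hat{y}(x,\xi)\equiv 0$ and $\Psi(x,\hat{y}(x,\xi),\xi)\equiv 0$, so $\beta$ consists of all indices for every $x$, and no density argument can rescue strict complementarity. The claim is also unnecessary, because you have misread Lemma \ref{L:yFdifferentiable}(b): it is not an ``F-differentiable iff $\beta=\emptyset$'' statement. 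It says $\hat{y}(\cdot,\xi)$ is F-differentiable at $z$ if and only if either $\beta=\emptyset$ and the first block formula holds, or $\beta\neq\emptyset$ and the stated compatibility condition holds, in which case the derivative is given with the $\beta$- and $\gamma$-rows equal to zero. The paper's proof therefore takes \emph{all} points $z$ of F-differentiability (a full-measure set by part (a) and Rademacher), observes that in both cases the derivative equals $-[I-D_{\alpha(\hat{y}(z,\xi))}(I-M(z,\hat{y}(z,\xi),\xi))]^{-1}D_{\alpha(\hat{y}(z,\xi))}L(z,\hat{y}(z,\xi),\xi)$, and passes to the limit; the final inclusion then follows exactly as you describe, since every limiting index set is some $J\in{\cal J}$. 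Replace your density argument with this two-case reading of the lemma and your proof matches the paper's.
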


\begin{proof}
Part (a). Note that by Lemma  \ref{L:yFdifferentiable} (a), for almost all $\bar{\xi}\in \Xi$ and  every $\bar{x}\in \X\cap X'$, there exists a unique solution $\hat{y}(\bar{x}, \bar{\xi})$ of the second stage SNCP \eqref{geq-SNCP-2}. Moreover, consider the LCP
\begin{equation}\label{eq:linearVIcor}
0\leq y \perp\Psi(\bar{x}, \bar{y}, \bar{\xi}) + \nabla_y\Psi(\bar{x}, \bar{y}, \bar{\xi})(\bar{y} - y)\geq0,
\end{equation}
where $\bar{y} = \hat{y}(\bar{x}, \bar{\xi})$.
By the strong monotonicity of $\Psi(\bar{x}, \cdot, \bar{\xi})$, $\nabla_y\Psi(\bar{x}, \bar{y}, \bar{\xi})$ is positive definite. Then by Corollary \ref{c:Amatrix}, the LCP \eqref{eq:linearVIcor} is strongly regular at $\bar{y}$. This implies the parametrically CD-regular of the second stage SNCP \eqref{geq-SNCP-2} with $\bar{x}$ at solution $\bar{y}$.  Then the Lipschitz property follows from \cite[Theorem 4]{Iz13} and the compactness of $X'$.

Part (b).  For any fixed $\bar{\xi}$, by Part (a), there exists a unique Lipschitz function $\hat{y}(\cdot, \bar{\xi})$ such that $\hat{y}(x, \bar{\xi})$ over $\X$ which solves
\begin{equation*}
0\leq  y\perp \Psi(x, y, \bar{\xi})\geq0.
\end{equation*}

Note that $\hat{y}(\cdot, \bar{\xi})$ is Lipschitz continuous and hence F-differentiable almost everywhere over $\B_\delta(\bar{x})$. Then for any $x'\in \B_\delta(\bar{x})$ such that $\hat{y}(x', \bar{\xi})$ is F-differentiable, by Lemma \ref{L:yFdifferentiable} (b), we have $\beta(\hat{y}(x', \xi))$ is empty and
\begin{equation}\label{eq:yF1}
(\nabla_x\hat{y}(x', \xi))_\alpha = - (\nabla_y\Psi(x', \hat{y}(x', \xi), \xi))^{-1}_{\alpha\alpha}(\nabla_x\Psi(x', \hat{y}(x', \xi), \xi))_\alpha, \;\;  (\nabla_x\hat{y}(x', \xi))_\gamma = 0
\end{equation}
or $\beta(\hat{y}(x', \xi))$ is not empty and
\begin{equation}\label{eq:yF2}
\begin{array}{lll}
&(\nabla_x\hat{y}(x', \xi))_\alpha = - (\nabla_y\Psi(x', \hat{y}(x', \xi), \xi))^{-1}_{\alpha\alpha}(\nabla_x\Psi(x', \hat{y}(x', \xi), \xi))_\alpha, \\
&(\nabla_x\hat{y}(x', \xi))_\beta = 0, \;\; (\nabla_x\hat{y}(x', \xi))_\gamma = 0.
\end{array}
\end{equation}
Let $D_{J} \in \D$ be an $m$-dimensional diagonal matrix with $J\in\mathcal{J}$ and
\begin{equation}\label{eq:Dj}
(D_{J})_{jj} : = \left\{
\begin{array}{ll}
1, & {\rm if } \;  j\in J,\\
0, & {\rm otherwise },
\end{array}
\right.
\end{equation}
$M(x, y, \xi) = \nabla_y \Psi(x, y, \xi) \;\inmat{ and } \; W(x, \xi) = [I - D_{\alpha(\hat{y}(x,\xi))} (I- M(x, y, \xi))]^{-1}D_{\alpha(\hat{y}(x,\xi))}.$
Then by \eqref{eq:yF1} and \eqref{eq:yF2},
\begin{equation*}
\nabla_x \hat{y}(x',\xi) =  -[I - D_{\alpha(\hat{y}(x,\bar{\xi}))}(I-M(x', \hat{y}(x',\bar{\xi}), \xi))]^{-1}D_{\alpha(\hat{y}(x,\bar{\xi}))}L(x', \hat{y}(x',\bar{\xi}), \bar{\xi}),
\end{equation*}
where  $L(x, \hat{y}(x, \xi), \xi) = \nabla_x \Psi(x, \hat{y}(x, \xi), \xi)$. Let
\begin{equation}\label{eq:Uj}
 U_J(M)= (I - D_J (I- M))^{-1}D_J, \;\;\forall J\in \mathcal{J}.
\end{equation}
  By the definition and upper semicontinuity of Clarke generalized Jacobian, we have
 \begin{equation*}
 \begin{array}{lll}
\partial \hat{y}(x,\xi)&=& \inmat{conv}\left\{\displaystyle{\lim_{z\to x}} \nabla_z \hat{y}(z,\xi) : \nabla_z \hat{y}(z,\xi)=\right.  \\
 && \left.-[I - D_{\alpha(\hat{y}(z,\xi))}(I-M(z, \hat{y}(z,\xi), \xi))]^{-1}D_{\alpha(\hat{y}(z,\xi))}L(z, \hat{y}(z,\xi), \xi)
\right\}\\
&\subseteq& \inmat{conv}\{-U_J(M(x, \hat{y}(x,\xi),\xi))L(x, \hat{y}(x,\xi),\xi): J\in {\cal J} \}.
\end{array}
\end{equation*}
We complete the proof.
\end{proof}

Under Assumption \ref{A:strongly-monotoney}, the two-stage SVI-NCP can be reformulated as a single stage SVI with $\hat{\Phi}(x, \xi) = \Phi(x, \hat{y}(x, \xi), \xi)$ and $\phi(x) = \bbe[\hat{\Phi}(x, \xi)]$ as follows
\begin{equation}\label{eq:NCPPhi}
0 \in \phi(x) +  \N_C(x).
\end{equation}
With the results in Theorem \ref{t:yLipunique},  SVI \eqref{eq:NCPPhi} has the following properties. Let 
$$
\Theta(x, y(\xi), \xi)=
\begin{pmatrix}
\Phi(x, y(\xi), \xi)\\
\Psi(x, y(\xi), \xi)
\end{pmatrix}
$$
and $\nabla\Theta(x, y, \xi)$ be the Jacobian of $\Theta$. Then
$$
\nabla\Theta(x, y, \xi) =\begin{pmatrix}
A(x, y, \xi) & B(x, y, \xi)\\
L(x, y, \xi) & M(x, y, \xi)
\end{pmatrix},
$$
where $A(x, y, \xi) = \nabla_x \Phi(x, y, \xi)$, $B(x, y, \xi) = \nabla_y \Phi(x, y, \xi)$, $L(x, y, \xi) = \nabla_x \Psi(x, y, \xi)$ and $M(x, y, \xi) = \nabla_y \Psi(x, y, \xi)$.

\begin{theorem}\label{t:continuousA1}
Suppose the conditions of Theorem \ref{t:yLipunique} hold. Let $X'\subseteq C$ be a compact set, for any $\xi\in \Xi$, $Y(\xi)=\{\hat{y}(x, \xi): x\in X'\}$ and $\nabla\Theta(x, y, \xi)$ be the Jacobian of $\Theta$. Assume
\begin{equation}\label{eq:lessinfty}
\bbe[\|A(x, \hat{y}(x, \xi), \xi) - B(x, \hat{y}(x, \xi), \xi) M(x, \hat{y}(x, \xi), \xi)^{-1}L(x, \hat{y}(x, \xi), \xi)\|]<+\infty
\end{equation}
over $\X\cap X'$.
Then
\begin{itemize}
\item [{\rm (a)}]
$\hat{\Phi}(x, \xi)$ is Lipschitz continuous w.r.t. $x$ over $\X\cap X'$  for all $\xi\in \Xi$.

\item [{\rm (b)}] $\bbe[\hat{\Phi}(x, \xi)]$ is Lipschitz continuous w.r.t. $x$ over $\X\cap X'$.
\end{itemize}
\end{theorem}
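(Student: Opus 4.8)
The plan is to derive both parts from Theorem~\ref{t:yLipunique} --- which gives that $x\mapsto\hat y(x,\xi)$ is Lipschitz continuous on $\X\cap X'$ and supplies an explicit Clarke Jacobian formula for it --- combined with the chain rule and the global Lipschitz continuity of $\Phi(\cdot,\cdot,\xi)$. For part~(a), fix $\xi\in\Xi$ and let $\kappa(\xi)<\infty$ be the Lipschitz modulus of $\hat y(\cdot,\xi)$ on $\X\cap X'$ from Theorem~\ref{t:yLipunique}(a). Since $\Phi(\cdot,\cdot,\xi)$ is Lipschitz with modulus $\kappa_\Phi(\xi)$, for all $x,x'\in\X\cap X'$
$$
\|\hat\Phi(x,\xi)-\hat\Phi(x',\xi)\|=\|\Phi(x,\hat y(x,\xi),\xi)-\Phi(x',\hat y(x',\xi),\xi)\|\le\kappa_\Phi(\xi)\big(1+\kappa(\xi)\big)\|x-x'\|,
$$
so $\hat\Phi(\cdot,\xi)$ is Lipschitz on $\X\cap X'$ with modulus $\kappa_\Phi(\xi)(1+\kappa(\xi))$; equivalently, $\hat\Phi(\cdot,\xi)$ is the composition of the $C^1$ (hence locally Lipschitz) map $\Phi(\cdot,\cdot,\xi)$ with a Lipschitz map. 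No convexity of $X'$ is needed here, since the bound on $\hat y(\cdot,\xi)$ from Theorem~\ref{t:yLipunique}(a) is global on $\X\cap X'$.

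For part~(b), write $\phi(x)=\bbe[\hat\Phi(x,\xi)]$; then for $x,x'\in\X\cap X'$,
$$
\|\phi(x)-\phi(x')\|\le\bbe\big[\|\hat\Phi(x,\xi)-\hat\Phi(x',\xi)\|\big]\le\bbe\big[\kappa_\Phi(\xi)(1+\kappa(\xi))\big]\,\|x-x'\|,
$$
so it suffices to show that the pointwise modulus $\kappa_\Phi(\xi)(1+\kappa(\xi))$ is integrable, i.e. $\bbe[\kappa_\Phi(\xi)]<\infty$ and $\bbe[\kappa_\Phi(\xi)\kappa(\xi)]<\infty$. To control $\kappa(\xi)$ --- and, more to the point, the modulus of $\hat\Phi(\cdot,\xi)$ itself --- I would pass to the Clarke generalized Jacobian: by Theorem~\ref{t:yLipunique}(b) and the chain rule for a $C^1$ map composed with a locally Lipschitz map, every element of $\partial_x\hat\Phi(x,\xi)$ lies in the convex hull of the matrices $A-B\,U_J(M)\,L$, with $A,B,L,M$ evaluated at $(x,\hat y(x,\xi),\xi)$ and $J\in\mathcal J$. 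Assumption~\ref{A:strongly-monotoney} makes $M$ positive definite, so $U_J(M)=(I-D_J(I-M))^{-1}D_J$ is the zero-padded principal-submatrix inverse of $M$ on $J$, whence $\|U_J(M)\|\le\|M^{-1}\|$ uniformly in $J$; applying Lebourg's mean value theorem over convex compact pieces of $X'$, the Lipschitz modulus of $\hat\Phi(\cdot,\xi)$ on $\X\cap X'$ equals $\sup_{x,J}\|A-B\,U_J(M)\,L\|$, which one aims to dominate by an integrable function of $\xi$. This is where \eqref{eq:lessinfty} enters, together with $\|L\|\le\kappa_\Psi(\xi)$, $\|A\|,\|B\|\le\kappa_\Phi(\xi)$ and the strong monotonicity modulus $\kappa_y(\xi)$. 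Once such a dominating function is available, the interchange of the difference quotient and the expectation in the display above is routine (dominated convergence), and $\phi$ is Lipschitz on $\X\cap X'$.

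The main obstacle is precisely this domination step. Assumption~\eqref{eq:lessinfty} is phrased through the ``unconstrained'' sensitivity matrix $A-BM^{-1}L$ (the Jacobian of $\hat\Phi$ one would obtain by the implicit function theorem if the second stage were the equation $\Psi=0$ rather than the NCP), whereas the genuine Clarke Jacobian elements are the active-index-set matrices $A-B\,U_J(M)\,L$, whose structure differs from that of $A-BM^{-1}L$. Reconciling the two requires exploiting that $M$ is positive definite so that all the $U_J(M)$ are norm-controlled by $\|M^{-1}\|$ (hence by $1/\kappa_y(\xi)$), and combining this with the moment hypotheses inherited from Theorem~\ref{t:yLipunique}; the remaining measurability and integrability bookkeeping is standard.
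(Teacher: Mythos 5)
Your part (a) is correct and in fact takes a more elementary route than the paper: you compose the Lipschitz bound $\kappa_\Phi(\xi)$ for $\Phi(\cdot,\cdot,\xi)$ with the Lipschitz modulus $\kappa(\xi)$ of $\hat y(\cdot,\xi)$ from Theorem~\ref{t:yLipunique}(a) to get the modulus $\kappa_\Phi(\xi)(1+\kappa(\xi))$, whereas the paper argues through compactness of $X'$ and $Y(\xi)$ and continuity of $\nabla\Theta$ to conclude that $\sup_{x\in X'}\|A-BM^{-1}L\|<+\infty$ and takes this supremum as the Lipschitz modulus of $\hat\Phi(\cdot,\xi)$. Either argument settles (a).

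For part (b) there is a genuine gap, and you have correctly diagnosed where it is but not closed it. The modulus you produce in (a), $\kappa_\Phi(\xi)(1+\kappa(\xi))$, is of no use for (b) because neither $\bbe[\kappa_\Phi(\xi)]<\infty$ nor $\bbe[\kappa_\Phi(\xi)\kappa(\xi)]<\infty$ is among the hypotheses of this theorem (those moment conditions appear only in Lemma~\ref{l:LIP}, which is not assumed here); likewise the auxiliary bounds you propose to combine, such as $\bbe[\kappa_\Phi(\xi)\kappa_\Psi(\xi)/\kappa_y(\xi)]$, are not assumed finite. The \emph{only} integrability hypothesis available is \eqref{eq:lessinfty}, so the proof must bound the Lipschitz modulus of $\hat\Phi(\cdot,\xi)$ directly by the quantity appearing there. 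That is exactly what the paper does: it asserts, via Theorem~\ref{t:yLipunique}(b) and the chain rule for the Clarke Jacobian, that
\[
{\rm lip}_{\Phi}(\xi)\;\le\;\sup_{x\in \X\cap X'}\bigl\|A(x,\hat y(x,\xi),\xi)-B(x,\hat y(x,\xi),\xi)\,M(x,\hat y(x,\xi),\xi)^{-1}L(x,\hat y(x,\xi),\xi)\bigr\|,
\]
and then (b) follows from \eqref{eq:lessinfty} by taking expectations. You stop at the observation that the genuine Clarke Jacobian elements involve $A-B\,U_J(M)\,L$ rather than $A-BM^{-1}L$ and that ``reconciling the two'' is the main obstacle --- which is precisely the step a complete proof must supply. (Your proposed reconciliation via $\|U_J(M)\|\le\|M^{-1}\|$ is also not literally correct for the nonsymmetric positive definite $M=\nabla_y\Psi$; both norms are controlled by $1/\kappa_y(\xi)$, but one does not dominate the other, and in any case a bound by $\kappa_\Phi+\kappa_\Phi\kappa_\Psi/\kappa_y$ would again require moment assumptions the theorem does not make.) So the decisive inequality relating $\sup_{x,J}\|A-B\,U_J(M)\,L\|$ to the integrable quantity in \eqref{eq:lessinfty} is missing from your argument, and without it part (b) does not follow.
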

\begin{proof}
Part (a). By the compactness of $X'$ and Theorem \ref{t:yLipunique} (a), $Y(\xi)$ is compact for almost all $\xi\in\Xi$. By the continuity of $\nabla\Theta(x, \hat{y}(x, \xi), \xi)$,
we have
$$
A(x, \hat{y}(x, \xi), \xi) - B(x, \hat{y}(x, \xi), \xi) M(x, \hat{y}(x, \xi), \xi)^{-1}L(x, \hat{y}(x, \xi), \xi)
$$
is continuous over $X'$. Then we have
$$
\sup_{x\in X'}\|A(x, \hat{y}(x, \xi), \xi) - B(x, \hat{y}(x, \xi), \xi) M(x, \hat{y}(x, \xi), \xi)^{-1}L(x, \hat{y}(x, \xi), \xi)\|<+\infty.
$$
Moreover,  by Theorem \ref{t:yLipunique} (b), the Lipschitz module of $\hat{\Phi}(x, \xi)$, denote by ${\rm lip}_{\Phi}(\xi)$ satisfies
$$
\begin{array}{lll}
&&{\rm lip}_{\Phi}(\xi)\\
& \leq& \displaystyle{\sup_{x\in X'}}\|A(x, \hat{y}(x, \xi), \xi) - B(x, \hat{y}(x, \xi), \xi) M(x, \hat{y}(x, \xi), \xi)^{-1}L(x, \hat{y}(x, \xi), \xi)\|\\
&<&+\infty.
\end{array}
$$

Part (b). it comes from  Part (a) and \eqref{eq:lessinfty} directly.
\end{proof}

\subsection{Existence, uniqueness and CD-regularity of the solutions}

Consider the mixed SVI-NCP \eqref{geq-SNCP-1}-\eqref{geq-SNCP-2} and its one stage reformulation \eqref{eq:NCPPhi}. If we replace Assumption \ref{A:strongly-monotoney} by the following assumption, we can have stronger results.

\begin{assu}\label{A:strongly-monotone}
For a.e.  $\xi\in\Xi$, $\Theta(x, y(\xi), \xi)$
 is strongly monotone with parameter $\kappa(\xi)$ at  $(x, y(\cdot))\in C\times \Y$, where 
$\bbe[\kappa(\xi)] < +\infty$.

\end{assu}
Note that Assumption \ref{A:strongly-monotoney} can be implied by Assumption \ref{A:strongly-monotone} over $C\times \Y$.

\begin{theorem}\label{t:existenceunique}
Suppose Assumption \ref{A:strongly-monotone}  holds  over $C \times \Y$ and $\Phi(x, y, \xi)$ and $\Psi(x, y, \xi)$ are continuously differentiable w.r.t. $(x, y)$ for a.e. $\xi\in \Xi$. Then
\begin{itemize}
\item[{\rm (a)}]
 $\mathcal{G}: C\times \Y \to C\times \Y$  is strongly monotone and hemicontinuous.

\item[{\rm (b)}]
 For all $x$ and almost all $\xi\in\Xi$, $\Psi(x, y(\xi), \xi)$ is strongly monotone and continuous w.r.t. $y(\xi)\in\bbr^m$.

\item [{\rm (c)}]
 The  two-stage SVI-NCP \eqref{geq-SNCP-1}-\eqref{geq-SNCP-2} has a unique solution.

\item [{\rm (d)}] The  two-stage SVI-NCP \eqref{geq-SNCP-1}-\eqref{geq-SNCP-2} has relatively complete recourse, that is
 for all $x$ and almost all $\xi\in\Xi$, the  NCP \eqref{geq-SNCP-2} has a unique solution.

 \end{itemize}
\end{theorem}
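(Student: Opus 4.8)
The plan is to treat the four assertions in turn. Parts (a) and (b) are read off directly from Assumption~\ref{A:strongly-monotone} by expanding the relevant bilinear forms; part (c) is obtained by recasting the two-stage SVI-NCP as a single variational inequality on the Hilbert space $\bbr^n\times\Y$ and applying the classical existence/uniqueness theory for strongly monotone hemicontinuous VIs; and part (d) is a finite-dimensional consequence of (b).

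For (a) I would first expand the form defining strong monotonicity of $\mathcal{G}$: since the first component of $\mathcal{G}$ is an expectation and its pairing with the deterministic vector $x-z$ commutes with $\bbe$, for $(x,y(\cdot)),(z,u(\cdot))\in C\times\Y$ the quantity $\langle \mathcal{G}(x,y(\cdot))-\mathcal{G}(z,u(\cdot)),(x-z,y(\cdot)-u(\cdot))\rangle$ equals $\int_\Xi\langle\Theta(x,y(\xi),\xi)-\Theta(z,u(\xi),\xi),(x-z,y(\xi)-u(\xi))\rangle\,P(d\xi)$. Applying the pointwise bound of Assumption~\ref{A:strongly-monotone} to the integrand and using a uniform positive lower bound $\bar\kappa$ for $\kappa(\cdot)$ (the essential infimum, which the assumption is understood to make positive) gives strong monotonicity of $\mathcal{G}$ with constant $\bar\kappa$. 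Hemicontinuity I would get by noting that along a line segment the integrand is continuous in the parameter $t$ by continuous differentiability of $\Phi$ and $\Psi$, with an integrable majorant built from the Lipschitz moduli $\kappa_\Phi(\xi),\kappa_\Psi(\xi)$ and $y(\cdot),u(\cdot)\in\Y$, so the dominated convergence theorem transfers continuity to the integral. Part (b) then falls out of the same inequality applied to two points with a common first coordinate: the $\Phi$-contribution vanishes and one is left with $(\Psi(x,y,\xi)-\Psi(x,u,\xi))^\top(y-u)\ge\kappa(\xi)\|y-u\|^2$ for a.e.\ $\xi$, with $\bbe[\kappa(\xi)]<\infty$, and continuity of $\Psi(x,\cdot,\xi)$ is automatic; in particular Assumption~\ref{A:strongly-monotoney} holds with $\kappa_y(\xi)=\kappa(\xi)$.

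For (c) I would set $\mathcal{K}:=\{u(\cdot)\in\Y:u(\xi)\in\bbr^m_+\text{ for a.e.\ }\xi\}$, a nonempty closed convex cone in $\Y$, so that $C\times\mathcal{K}$ is nonempty, closed and convex in $\bbr^n\times\Y$, and show that $(\bar x,\bar y(\cdot))\in C\times\mathcal{K}$ solves \eqref{geq-SNCP-1}--\eqref{geq-SNCP-2} if and only if it solves the VI $\langle\mathcal{G}(\bar x,\bar y(\cdot)),(z,u(\cdot))-(\bar x,\bar y(\cdot))\rangle\ge0$ for all $(z,u(\cdot))\in C\times\mathcal{K}$. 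Testing with $u(\cdot)=\bar y(\cdot)$ recovers $-\bbe[\Phi(\bar x,\bar y(\xi),\xi)]\in\N_C(\bar x)$, i.e.\ \eqref{geq-SNCP-1}; testing with $z=\bar x$ and $u(\cdot)=\bar y(\cdot)\mathbf{1}_{A^c}+v(\cdot)\mathbf{1}_{A}$ for measurable $A$ and arbitrary $v(\cdot)\in\mathcal{K}$ yields $\int_A\Psi(\bar x,\bar y(\xi),\xi)^\top(v(\xi)-\bar y(\xi))\,P(d\xi)\ge0$ for every $A$, and then, letting $v$ range over constant functions from a countable dense subset of $\bbr^m_+$, the pointwise complementarity $0\le\bar y(\xi)\perp\Psi(\bar x,\bar y(\xi),\xi)\ge0$ for a.e.\ $\xi$, i.e.\ \eqref{geq-SNCP-2}; the converse follows by running these steps backward. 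With the equivalence in hand, since by (a) $\mathcal{G}$ is strongly monotone---hence coercive, as recalled after the definition of coerciveness---and hemicontinuous, and $C\times\mathcal{K}$ is closed and convex, existence and uniqueness of the solution of this Hilbert-space VI follow from \cite[Chapter~12]{HaKoSc05}.

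For (d) I would invoke (b): for every $x$ in the relevant domain and a.e.\ $\xi$, $\Psi(x,\cdot,\xi)$ is a continuous strongly monotone operator on $\bbr^m$, and the NCP \eqref{geq-SNCP-2} is the variational inequality over $\bbr^m_+$ it induces. Strong monotonicity gives uniqueness immediately, while the estimate $\Psi(x,y,\xi)^\top y\ge\kappa(\xi)\|y\|^2-\|\Psi(x,0,\xi)\|\,\|y\|\to+\infty$ as $\|y\|\to\infty$ provides the coercivity for existence via the usual truncation argument (solve the VI on $\bbr^m_+\cap R\B$ by Hartman--Stampacchia and let $R\to\infty$; for large $R$ the solution is interior to $R\B$ and hence solves the full NCP). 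This gives a unique second-stage solution for every $x$, i.e.\ relatively complete recourse. The step I expect to be the main obstacle is precisely the equivalence used in (c) between the a.e.-pointwise NCP and the $\Y$-VI---the measurable localization argument together with the passage from ``for each $v$, a.e.\ $\xi$'' to ``a.e.\ $\xi$, for all $v$''---and, on a more mundane level, verifying that $\mathcal{G}$ genuinely maps $\bbr^n\times\Y$ into itself and that $\bar\kappa>0$ under the stated integrability of $\kappa(\cdot)$ and of the Lipschitz moduli; everything else is routine.
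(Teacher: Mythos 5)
Your proposal is correct and follows essentially the same route as the paper: the paper's own proof simply reads (a) and (b) off Assumption~\ref{A:strongly-monotone} and obtains (c) and (d) by noting that strong monotonicity implies coercivity and then invoking Theorem 12.2 and Lemma 12.2 of \cite{HaKoSc05}. You supply the details the paper delegates to that citation --- the Hilbert-space VI reformulation over $C\times\mathcal{K}$, the measurable-localization equivalence between the a.e.\ pointwise NCP and the $\Y$-level VI, and the (correct) observation that strong monotonicity of $\mathcal{G}$ in the $\Y$-direction requires an essential positive lower bound on $\kappa(\cdot)$, not merely $\bbe[\kappa(\xi)]<\infty$ --- but the underlying argument is the same.
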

\begin{proof}
Parts (a) and (b) come from Assumption \ref{A:strongly-monotone} over $C\times \Y$ directly. Since the strong monotonicity of $\mathcal{G}$  and $\Psi$ implies the coerciveness of $\mathcal{G}$ and $\Psi$, see \cite[Chapter 12]{HaKoSc05}, by \cite[Theorem 12.2 and Lemma 12.2]{HaKoSc05}, we have Part (c) and Part (d).
\end{proof}

With the results in sections 3.1 and above, we have the following theorem  by only assume that Assumption \ref{A:strongly-monotone} holds in a neighborhood of {\rm Sol}$^*\cap X'\times \Y$.

\begin{theorem}\label{t:continuous}
Let 
{\rm Sol}$^*$ be the solution set of the mixed SVI-NCP \eqref{geq-SNCP-1}-\eqref{geq-SNCP-2}. Suppose (i) there exists a compact set $X'$ such that  ${\rm Sol}^*\cap X'\times \Y$ is nonempty,  (ii) Assumption \ref{A:strongly-monotone} holds over {\rm Sol}$^*\cap X'\times \Y$  and (iii) the conditions of Theorem \ref{t:continuousA1} hold.
Then
\begin{itemize}

\item [{\rm (a)}] For any $(x, y(\cdot))\in {\rm Sol}^*$, every matrix in $\partial \hat{\Phi}(x)$ is positive definite and   $\hat{\Phi}$ and $\phi$ are strongly  monotone at $x$.

\item [{\rm (b)}]  Any solution $x^*\in \mathcal{S}^*\cap X'$ of SVI \eqref{eq:NCPPhi}   is CD-regular and an isolate solution.

\item [{\rm (c)}] Moreover, if replacing conditions (i) and (ii) by supposing (iv) Assumption~\ref{A:strongly-monotone} holds over $\bbr^n\times \Y$, then SVI \eqref{eq:NCPPhi} has a unique solution $x^*$ and the solution is CD-regular.
\end{itemize}
\end{theorem}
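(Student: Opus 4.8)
The plan is to derive everything from one algebraic observation: the Schur complement of a (possibly nonsymmetric) matrix whose symmetric part is positive definite again has a positive definite symmetric part, with the same modulus. Feeding this into the Clarke‑Jacobian formulas of Theorems \ref{t:yLipunique} and \ref{t:continuousA1} gives (a); then (b) follows from Corollary \ref{c:Amatrix} and an elementary monotonicity argument, and (c) is obtained by combining (a)--(b) with the existence/uniqueness result Theorem \ref{t:existenceunique}.

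\emph{Part (a).} Fix $x^*\in\mathcal S^*\cap X'$ (the first component of a pair in ${\rm Sol}^*\cap X'\times\Y$) and, for a.e.\ $\xi\in\Xi$, abbreviate $A^*=A(x^*,\hat y(x^*,\xi),\xi)$, $B^*=B(x^*,\hat y(x^*,\xi),\xi)$, $L^*=L(x^*,\hat y(x^*,\xi),\xi)$, $M^*=M(x^*,\hat y(x^*,\xi),\xi)$, so that $\nabla\Theta(x^*,\hat y(x^*,\xi),\xi)$ has blocks $A^*,B^*,L^*,M^*$. A direct block computation with \eqref{eq:Uj} shows that $U_J(M^*)L^*$ has $J$-rows equal to $(M^*_{JJ})^{-1}L^*_J$ and vanishing $J^c$-rows (the block $M^*_{JJ}$ being invertible since its symmetric part is $\succeq\kappa(\xi)I$). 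Hence, combining Theorem \ref{t:yLipunique}(b) with the chain rule for Clarke generalized Jacobians (legitimate because $\Phi$ is continuously differentiable), the Clarke Jacobian of $\hat\Phi(\cdot,\xi)$ at $x^*$, denoted $\partial_x\hat\Phi(x^*,\xi)$, satisfies
$$
\partial_x\hat\Phi(x^*,\xi)\ \subseteq\ \mathrm{conv}\big\{\,A^*-B^*_J(M^*_{JJ})^{-1}L^*_J:\ J\in\J\,\big\},
$$
where $B^*_J$ collects the columns of $B^*$ indexed by $J$. Each matrix $A^*-B^*_J(M^*_{JJ})^{-1}L^*_J$ is the Schur complement of $M^*_{JJ}$ in the principal submatrix of $\nabla\Theta(x^*,\hat y(x^*,\xi),\xi)$ obtained by keeping all rows and columns of the $x$-block together with the $J$-indexed rows and columns of the $y$-block. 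By condition (ii) the symmetric part of $\nabla\Theta(x^*,\hat y(x^*,\xi),\xi)$ dominates $\kappa(\xi)I$, and the same holds for every principal submatrix on the corresponding subspace; evaluating the quadratic form of that submatrix at $\big(u,-(M^*_{JJ})^{-1}L^*_J u\big)$ then gives $u^\top\big(A^*-B^*_J(M^*_{JJ})^{-1}L^*_J\big)u\ge\kappa(\xi)\|u\|^2$ for all $u\in\bbr^n$. Convex combinations preserve this, so every matrix in $\partial_x\hat\Phi(x^*,\xi)$ has symmetric part $\succeq\kappa(\xi)I$, and by upper semicontinuity of the Clarke Jacobian and Lebourg's mean value theorem $\hat\Phi(\cdot,\xi)$ is strongly monotone at $x^*$ for a.e.\ $\xi$. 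Finally, under condition (iii) the Lipschitz modulus of $\hat\Phi(\cdot,\xi)$ is dominated by an integrable function (Theorem \ref{t:continuousA1} and \eqref{eq:lessinfty}), so $\partial\phi(x^*)\subseteq\bbe\big[\partial_x\hat\Phi(x^*,\xi)\big]$; hence every matrix in $\partial\phi(x^*)$ has symmetric part $\succeq\bbe[\kappa(\xi)]I\succ0$, and $\phi$ is strongly monotone at $x^*$.

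\emph{Parts (b) and (c).} For (b), fix $x^*\in\mathcal S^*\cap X'$ and any $J\in\partial\phi(x^*)$. By (a), $u^\top Ju>0$ for all $u\ne0$, so Corollary \ref{c:Amatrix} applied to the linear VI $0\in\phi(x^*)+J(x-x^*)+\N_C(x)$ shows this linearization is strongly regular; since this holds for every $J\in\partial\phi(x^*)$, $x^*$ is CD-regular. For isolation, if $x'$ solves \eqref{eq:NCPPhi} near $x^*$ then $-\phi(x^*)\in\N_C(x^*)$ and $-\phi(x')\in\N_C(x')$, so monotonicity of $\N_C$ forces $(x'-x^*)^\top(\phi(x')-\phi(x^*))\le0$, while strong monotonicity of $\phi$ near $x^*$ forces $(x'-x^*)^\top(\phi(x')-\phi(x^*))\ge\mu\|x'-x^*\|^2$ for some $\mu>0$; hence $x'=x^*$. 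For (c), if Assumption \ref{A:strongly-monotone} holds over all of $\bbr^n\times\Y$, then in particular over $C\times\Y$, so Theorem \ref{t:existenceunique} yields that \eqref{geq-SNCP-1}--\eqref{geq-SNCP-2} has a unique solution $(x^*,y^*(\cdot))$ and has relatively complete recourse; consequently \eqref{eq:NCPPhi} has the unique solution $x^*$. Choosing $X'$ to be a compact neighborhood of $x^*$, conditions (i)--(iii) of the theorem are met, so part (b) applies at $x^*$ and gives that $x^*$ is CD-regular.

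\emph{Main obstacle.} The technical heart is Part (a): one must pair the active-set-dependent Clarke-Jacobian formula of Theorem \ref{t:yLipunique}(b) with the generalized chain rule, recognize each resulting generator $A^*-B^*_J(M^*_{JJ})^{-1}L^*_J$ as a Schur complement of a principal submatrix of $\nabla\Theta$, and justify the interchange $\partial\phi(x^*)\subseteq\bbe[\partial_x\hat\Phi(x^*,\xi)]$ using the domination built into condition (iii). Once the positive definiteness in (a) is in hand, parts (b) and (c) are short.
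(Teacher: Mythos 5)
Your proof is correct and follows the same overall architecture as the paper's: the Clarke chain rule applied to $\hat\Phi(x,\xi)=\Phi(x,\hat y(x,\xi),\xi)$ gives the inclusion $\partial_x\hat\Phi(x^*,\xi)\subseteq \mathrm{conv}\{A-BU_J(M)L: J\in\J\}$, positive definiteness of these generators yields strong monotonicity of $\hat\Phi$ and $\phi$, Corollary~\ref{c:Amatrix} then gives CD-regularity and local uniqueness in part (b), and Theorem~\ref{t:existenceunique} supplies global existence and uniqueness in part (c). The one genuine point of departure is the key positive-definiteness step: the paper outsources it, citing Lemma~2.1(ii) of \cite{ChSuXu17} to assert $A-BU_J(M)L\succ 0$ for all $J\in\J$, whereas you prove it from scratch by computing $U_J(M)=\bigl(\begin{smallmatrix} M_{JJ}^{-1} & 0\\ 0 & 0\end{smallmatrix}\bigr)$ from \eqref{eq:Uj}, recognizing each generator as the Schur complement $A-B_JM_{JJ}^{-1}L_J$ of the principal submatrix of $\nabla\Theta$ indexed by the $x$-block and $J$, and evaluating its quadratic form at $\bigl(u,-M_{JJ}^{-1}L_Ju\bigr)$. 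This computation is correct and buys two things the paper's citation does not make explicit: a self-contained argument and a quantitative monotonicity modulus $\kappa(\xi)$ (hence $\bbe[\kappa(\xi)]$ for $\phi$). Your isolation argument in (b) — combining monotonicity of $\N_C$ with local strong monotonicity of $\phi$ — is also spelled out where the paper simply appeals to the definition of CD-regularity; both are valid.
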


\begin{proof}
Part (a). Note that under Assumption \ref{A:strongly-monotone}, for any $(x, y(\cdot))\in {\rm Sol}^*$, the matrix
$$
\begin{pmatrix}
A(x, y(\xi), \xi) & B(x, y(\xi), \xi)\\
L(x, y(\xi), \xi) & M(x, y(\xi), \xi)
\end{pmatrix} \succ 0.
$$
From (ii) of Lemma 2.1 in \cite{ChSuXu17}, we have
$$
A(x, y(\xi), \xi) - B(x, y(\xi), \xi)U_J(M(x, y(\xi),\xi))L(x, y(\xi),\xi)\succ 0, \,\,\, \forall J\in {\cal J}.
$$
 For any $\bar{x}$ such that $(\bar{x}, \bar{y}(\cdot))\in {\rm Sol}^*$,
let
$\B_\delta(\bar{x})$ be a small neighborhood of $\bar{x}$,
$$
\D_{\hat{y}}(\bar{x}):=\{x': x'\in \B_\delta(\bar{x}), \;  \hat{y}(x', \xi)  \inmat{ is F-differentiable w.r.t. $x$ at }x'\}
$$
and
$$
\D_{\hat{\Phi}}(\bar{x}):=\{x': x'\in \B_\delta(\bar{x}), \;  \hat{\Phi}(x', \xi)  \inmat{ is F-differentiable w.r.t. $x$ at }x'\}.
$$
Since $\Phi(x, y, \xi)$ is continuously differentiable w.r.t. $(x, y)$, $\hat{y}(\cdot, \xi)$ is F-differentiable w.r.t. $x$, which implies $\hat{\Phi}(\cdot, \xi)$ is F-differentiable w.r.t. $x$. Then $\D_{\hat{y}}(\bar{x})\subseteq \D_{\hat{\Phi}}(\bar{x})$.
Moreover, since $\hat{y}(x, \xi)$ and $\hat{\Phi}(x, \xi)$ are Lipschitz continuous w.r.t. $x$ over $\B_\delta(\bar{x})$, they are F-differentiable almost everywhere over $\B_\delta(\bar{x})$. Then the measure of $\D_{\hat{\Phi}}(\bar{x})\backslash \D_{\hat{y}}(\bar{x})$ is zero.
By Theorem \ref{t:yLipunique} (b) and the definition of Clarke generalized Jacobian, we have
\begin{equation}\label{eq:clarkehatphi}
\begin{array}{lll}
&\partial_x \hat{\Phi}(\bar{x}, \xi)\\
 =& \inmat{conv}\left\{ \displaystyle{\lim_{x'\to \bar{x}}} \nabla_x \hat{\Phi}(x',  \xi):  x'\in \D_{\hat{\Phi}}(\bar{x})\right\}\\
=& \inmat{conv}\left\{ \displaystyle{\lim_{x'\to \bar{x}}} \nabla_x \Phi(x', \hat{y}(x', \xi), \xi) + \nabla_y \Phi(x', \hat{y}(x', \xi), \xi) \nabla_x \hat{y}(x', \xi): x'\in \D_{\hat{y}}(\bar{x}) \right\}\\
=& \inmat{conv}\left\{\displaystyle{\lim_{x'\to \bar{x}}} A(x', \hat{y}(x', \xi), \xi) \right.\\
&\left.- B(x', \hat{y}(x', \xi), \xi)U_{\alpha(\hat{y}(x',\xi))}(M(x', \hat{y}(x',\xi),\xi))L(x', \hat{y}(x',\xi),\xi):\right.   \left. x'\in \D_{\hat{y}}(\bar{x}) \right\}\\
\subset&\inmat{conv}\left\{ A(x, \hat{y}(x, \xi), \xi) \right.\\
&\left.- B(x, \hat{y}(x, \xi), \xi)U_J(M(x, \hat{y}(x,\xi),\xi))L(x, \hat{y}(x,\xi),\xi): J\in {\cal J} \right\},
\end{array}
\end{equation}
where the second equation is from \cite[Theorem 4]{Wa81} and the fact that the measure of $\D_{\hat{\Phi}}(\bar{x})\backslash \D_{\hat{y}}(\bar{x})$ is $0$.
By \eqref{eq:clarkehatphi}, every matrix in $\partial_x \hat{\Phi}(\bar{x}, \xi)$ is positive definite. And then $\hat{\Phi}$ is strongly  monotone which implies $\phi$ is strongly monotone at $\bar{x}$.

Part (b). By Corollary \ref{c:Amatrix}, the linearized SVI
\begin{equation*}
0 \in  V_{x^*}(x-x^*) + \bbe[\hat{\Phi}(x^*,  \xi)] + \N_C(x),
\end{equation*}
is strongly regular for all $V_{x^*}\in \partial \phi(x^*) \subseteq \bbe[\partial_x \hat{\Phi}(x^*, \xi)]$. Then the NCP \eqref{eq:NCPPhi} at $x^*$ is CD-regular. Moreover,  by the definition of CD regular, $x^*$ is a unique solution of the  NCP \eqref{eq:NCPPhi} over a neighborhood of $x^*$.

Part (c). By Part (a) and Theorem \ref{t:existenceunique}, NCP \eqref{eq:NCPPhi} has a unique solution $x^*$. The CD regular of NCP \eqref{eq:NCPPhi} at $x^*$ follows from Part (b).
\end{proof}

\subsection{Convergence analysis of the SAA  two-stage SVI-NCP}

Consider the two-stage SVI-SNCP  \eqref{geq-SNCP-1}-\eqref{geq-SNCP-2} and its
SAA problem \eqref{geq-vi-1-saa}-\eqref{geq-vi-2-saa}.

We discuss the existence and uniqueness of the solutions of SAA two-stage SVI \eqref{geq-vi-1-saa}-\eqref{geq-vi-2-saa} under Assumption \ref{A:strongly-monotone} over $C\times \Y$ firstly.
 Define
$$
\mathcal{G}_N := \begin{pmatrix}
N^{-1}\sum_{j=1}^N \Phi(x, y(\xi^j), \xi^j)\\
\Psi(x, y(\xi^1), \xi^1)\\
\vdots\\
\Psi(x, y(\xi^N), \xi^N)
\end{pmatrix}.
$$

\begin{theorem}\label{t:existenceN}
Suppose Assumption \ref{A:strongly-monotone}  holds  over $C \times \Y$ and $\Phi(x, y, \xi)$ and $\Psi(x, y, \xi)$ are continuously differentiable w.r.t. $(x, y)$ for a.e. $\xi\in \Xi$. Then
\begin{itemize}
\item[{\rm (a)}]
 $\mathcal{G}_N: C\times \Y \to C\times \Y$ which is strongly monotone with $N^{-1}\sum_{j=1}^N\kappa(\xi^j)$ and hemicontinuous.

\item [{\rm (b)}]
 The SAA two-stage SVI \eqref{geq-vi-1-saa}-\eqref{geq-vi-2-saa} has a unique solution.

 \end{itemize}
\end{theorem}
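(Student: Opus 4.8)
The plan is to observe that the SAA system \eqref{geq-vi-1-saa}--\eqref{geq-vi-2-saa} is exactly the mixed two-stage SVI-NCP \eqref{geq-SNCP-1}--\eqref{geq-SNCP-2} in which the distribution $P$ of $\xi$ has been replaced by the empirical distribution $P_N:=N^{-1}\sum_{j=1}^N\delta_{\xi^j}$, and then to transfer Theorem \ref{t:existenceunique}. For the SAA problem the second stage involves only the finitely many vectors $y(\xi^1),\dots,y(\xi^N)$, so $\mathcal{G}_N$ is precisely the mapping $\mathcal{G}$ attached to $P_N$, acting on $\bbr^n\times\Y$ with $\Y$ carrying the ${\cal L}_2(P_N)$ inner product $N^{-1}\sum_{j=1}^N y(\xi^j)^\top u(\xi^j)$; under this inner product $\Y$ is identified with $(\bbr^m)^N$, so the map is genuinely finite dimensional and no degeneracy off the atoms can occur. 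One then checks the hypotheses of Theorem \ref{t:existenceunique} for $P_N$: $\Phi$ and $\Psi$ are continuously differentiable by assumption, and Assumption \ref{A:strongly-monotone} (valid for $P$-a.e.\ $\xi$, hence, w.p.1, at every $\xi^j$) provides strong monotonicity of $\Theta(\cdot,\cdot,\xi^j)$ with modulus $\kappa(\xi^j)>0$ for each $j$, with $\bbe_{P_N}[\kappa(\xi)]=N^{-1}\sum_{j=1}^N\kappa(\xi^j)<\infty$ automatically.

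For part (a), I would repeat the computation behind Theorem \ref{t:existenceunique}(a): for $(x,y(\cdot)),(z,u(\cdot))\in C\times\Y$ the pairing $\langle\mathcal{G}_N(x,y(\cdot))-\mathcal{G}_N(z,u(\cdot)),(x-z,y(\cdot)-u(\cdot))\rangle$ equals $N^{-1}\sum_{j=1}^N$ of the pairing of $\Theta(x,y(\xi^j),\xi^j)-\Theta(z,u(\xi^j),\xi^j)$ with $(x-z,\,y(\xi^j)-u(\xi^j))$; bounding each summand below by $\kappa(\xi^j)(\|x-z\|^2+\|y(\xi^j)-u(\xi^j)\|^2)$ through Assumption \ref{A:strongly-monotone} and regrouping gives strong monotonicity with the claimed modulus $N^{-1}\sum_{j=1}^N\kappa(\xi^j)$. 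Hemicontinuity is immediate because $\Phi$ and $\Psi$, being $C^1$, are continuous, so the scalar function appearing in the definition of hemicontinuity is continuous, exactly as in Theorem \ref{t:existenceunique}(a).

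For part (b), strong monotonicity of $\mathcal{G}_N$ implies its coerciveness on $C\times\Y$ (\cite[Chapter 12]{HaKoSc05}), whence \cite[Theorem 12.2 and Lemma 12.2]{HaKoSc05} yield existence and uniqueness of the solution of \eqref{geq-vi-1-saa}--\eqref{geq-vi-2-saa}; equivalently, Theorem \ref{t:existenceunique}(c) applied with $P_N$ gives the conclusion directly, and since the problem is finite dimensional existence is in any case elementary. I do not expect a real obstacle: the only point needing care is the choice of ambient Hilbert space — one must equip $\Y$ with the ${\cal L}_2(P_N)$ norm so that the $N^{-1}$ weighting of the first block of $\mathcal{G}_N$ matches the pairing on the second-stage blocks, since it is with respect to this norm that the quoted monotonicity modulus and the coerciveness are meant. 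Everything else is a verbatim specialization of the reasoning for Theorem \ref{t:existenceunique}.
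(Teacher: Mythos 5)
Your proposal is correct and follows essentially the same route as the paper, whose own proof of this theorem is the one-line remark that both parts follow from Assumption \ref{A:strongly-monotone}; you have simply filled in the details the paper leaves implicit, namely viewing \eqref{geq-vi-1-saa}--\eqref{geq-vi-2-saa} as the two-stage SVI-NCP under the empirical measure $P_N$, summing the per-scenario monotonicity inequalities, and invoking the same results from \cite[Chapter 12]{HaKoSc05} as in Theorem \ref{t:existenceunique}. Your observation that the second-stage block of $\Y$ must carry the ${\cal L}_2(P_N)$ inner product for the $N^{-1}$ weighting to match is a useful clarification that the paper does not spell out.
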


\begin{proof}
By Assumption \ref{A:strongly-monotone}, we have Parts (a) and (b).
\end{proof}

Then we investigate the almost sure convergence and convergence rate of  the first stage solution $\bar{x}_N$ of \eqref{geq-vi-1-saa}-\eqref{geq-vi-2-saa}  to optimal solutions of the true problem by only supposing Assumption \ref{A:strongly-monotone} holds at a neighborhood of Sol$^*\cap X'\times \Y$.

Note that the normal cone multifunction $x\mapsto \N_C(x)$ is closed.
Note also that function
$
\hat{\Phi}(x,\xi)=\Phi(x, \hat{y}(x,\xi),\xi),
$
where $\hat{y}(x,\xi)$ is a solution  of the second stage problem \eqref{geq-SNCP-2}. Then the first stage of  SAA problem with second stage solution  can be written as
\begin{equation}\label{eq:NCPPhiSAA}
0\in N^{-1}\sum_{j=1}^N \hat{\Phi}(x,\xi^j) + \N_C(x).
\end{equation}

Under the conditions (i)-(iii) of Theorem \ref{t:continuous},
the two-stage SVI-SNCP  \eqref{geq-SNCP-1}-\eqref{geq-SNCP-2} and its
SAA problem \eqref{geq-vi-1-saa}-\eqref{geq-vi-2-saa} satisfy
conditions  of Theorem \ref{th-consist} and    with $\R^{-1}(t)\leq \frac{t}{c}$ for some positive number $c$ (by Remark \ref{re:polynormcone},  the strongly monotone of $\phi$ and the argument in the proof of Part (b), Theorem \ref{t:expge} ). Then
Theorem \ref{th-consist} can be applied directly.

\begin{definition}\label{D:strongstable}{\rm \cite{FaPa03,PaSuSu03}}
A solution $x^*$ of the SVI \eqref{eq:NCPPhi} is said to be strongly stable if for every open neighborhood $\V$ of $x^*$ such that ${\rm SOL}(C, \phi)\cap {\rm cl}\V = \{x^*\}$, there exist two positive scalars ${\delta}$ and $\epsilon$ such that for every continuous function $\tilde \phi$ satisfying
$$
\sup_{x\in C\cap {\rm cl}\V} \|\tilde \phi(x) - \phi(x)\| \leq \epsilon,
$$
the set ${\rm SOL}(C, \tilde \phi)\cap \V$ is a singleton; moreover, for another continuous function $\bar{\phi}$ satisfying the same condition as $\tilde \phi$, it holds that
\begin{equation*}
\|x-x'\| \leq {\delta} \|[ \phi(x) - \tilde \phi(x) ] - [ \phi(x') - \bar{\phi}(x') ]\|,
\end{equation*}
where $x$ and $x'$ are  elements in the sets ${\rm SOL}(C, \tilde \phi)\cap \V$ and ${\rm SOL}(C, \bar{\phi})\cap \V$, respectively.
\end{definition}

\begin{theorem}\label{t:convergenceVI}
Suppose conditions (i)-(iii) of Theorem \ref{t:continuous} hold.  Let $x^*$ be a solution of   the SVI \eqref{eq:NCPPhi} 
and $X'$ be a compact set such that $x^*\in {\rm int}(X')$. Assume there exists $\e>0$ such that for $N$ sufficiently large,
\begin{equation}\label{eq:int}
x^*\notin {\rm cl}( {\rm bd}(\X) \cap {\rm int}( \bar{\X}_N\cap X')).
\end{equation}
Then there exist a solution $\hat{x}_N$ of the SAA problem \eqref{eq:NCPPhiSAA} and
a positive scalar ${\delta}$ such that $\|\hat{x}_N - x^*\|\to0$ as $N\to\infty$ w.p.1 and
for $N$ sufficiently large w.p.1
\begin{equation}\label{eq:xphi}
\|\hat{x}_N - x^*\| \leq {\delta} \sup_{x\in \X\cap X'} \| \hat{\phi}_N(x) - \phi(x)  \|.
\end{equation}
\end{theorem}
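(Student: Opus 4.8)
The plan is to combine deterministic stability of the solution $x^*$ of the reduced generalized equation \eqref{eq:NCPPhi} with the uniform Law of Large Numbers for $\hat{\phi}_N$. By hypotheses (i)--(iii) and Theorem \ref{t:continuous}(a)--(b), $x^*\in\mathcal{S}^*\cap X'$ is CD-regular and an isolated solution of \eqref{eq:NCPPhi}, and $\phi$ is strongly monotone at $x^*$. CD-regularity at $x^*$ means the linearized SVI is strongly regular for every matrix in $\partial\phi(x^*)$, which, by the theory of strongly stable solutions of variational inequalities (\cite{FaPa03,PaSuSu03}), implies that $x^*$ is strongly stable in the sense of Definition \ref{D:strongstable}. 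Fix accordingly an open neighborhood $\V\ni x^*$ with ${\rm SOL}(C,\phi)\cap{\rm cl}\V=\{x^*\}$ and ${\rm cl}\V\subseteq{\rm int}(X')$, together with the scalars $\delta,\epsilon>0$ provided by Definition \ref{D:strongstable}.

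Next I would collect the stochastic ingredients. Since, under (i)--(iii), the two-stage SVI-NCP satisfies the hypotheses of Theorem \ref{th-consist}, we have w.p.1 that $\cd_N\to0$, hence (using $\X\subseteq\bar{\X}_N$) $\bbh(\bar{\X}_N\cap X',\X\cap X')\to0$; and since by Theorem \ref{t:continuousA1}(a) the function $\hat{\Phi}(\cdot,\xi)$ is Lipschitz continuous over $\X\cap X'$ with integrable modulus, the uniform Law of Large Numbers gives
\[
\sup_{x\in\X\cap X'}\|\hat{\phi}_N(x)-\phi(x)\|\longrightarrow0\quad\text{w.p.1 as }N\to\infty .
\]
From now on the argument is deterministic, carried out for a fixed sample point outside a $\bbp$-null set.

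The technical core is to exhibit \eqref{eq:NCPPhiSAA} as a small admissible perturbation of \eqref{eq:NCPPhi} on the fixed neighborhood $\V$ of $x^*$. This is where \eqref{eq:int} is used: because $x^*\notin{\rm cl}\big({\rm bd}(\X)\cap{\rm int}(\bar{\X}_N\cap X')\big)$ and $\cd_N\to0$, after shrinking $\V$ I would show that for $N$ large the part of $\bar{\X}_N\cap{\rm cl}\V$ relevant to solutions near $x^*$ is contained in $\X$; in particular any solution of \eqref{eq:NCPPhiSAA} lying in $\V$ lies in $\X\cap X'$, where $\hat{\phi}_N$ is a well-defined Lipschitz function (Theorem \ref{t:continuousA1}) and $\phi$ is defined as well. (When $x^*\in{\rm int}(\X)$ this is immediate; in general it is the analogue of the way condition (iv) was used in the proof of Theorem \ref{t:expge}(b) to force $\hat{\s}_N\subset\X$.) After extending $\hat{\phi}_N$, if necessary, to a Lipschitz map on $C\cap{\rm cl}\V$ coinciding with it on $\X\cap{\rm cl}\V$, and using the uniform convergence above to guarantee $\sup_{x\in C\cap{\rm cl}\V}\|\hat{\phi}_N(x)-\phi(x)\|<\epsilon$ for $N$ large, Definition \ref{D:strongstable} with $\tilde\phi=\hat{\phi}_N$ shows that ${\rm SOL}(C,\hat{\phi}_N)\cap\V$ is a singleton $\{\hat{x}_N\}$, which is a solution of \eqref{eq:NCPPhiSAA}; and with $\bar\phi=\phi$ (so $x'=x^*$ and $\phi(x^*)-\phi(x^*)=0$) the error-bound part of Definition \ref{D:strongstable} yields
\[
\|\hat{x}_N-x^*\|\le\delta\,\|\phi(\hat{x}_N)-\hat{\phi}_N(\hat{x}_N)\|\le\delta\,\sup_{x\in\X\cap X'}\|\hat{\phi}_N(x)-\phi(x)\| ,
\]
which is \eqref{eq:xphi}; together with the displayed uniform limit this also gives $\|\hat{x}_N-x^*\|\to0$ w.p.1.

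The main obstacle is the localization in the third paragraph: verifying that passing to the SAA does not create spurious solutions near $x^*$ coming from the enlarged feasible region $\bar{\X}_N\setminus\X$, so that \eqref{eq:NCPPhiSAA} genuinely is an admissible $\epsilon$-perturbation of \eqref{eq:NCPPhi}. Hypothesis \eqref{eq:int} is precisely designed to exclude this, but making the reduction rigorous --- in particular producing the Lipschitz extension of $\hat{\phi}_N$ (equivalently of $\phi$) off $\X$ while keeping it $\epsilon$-close to $\phi$ near $x^*$ --- requires care. The remaining ingredients (Theorem \ref{t:continuous}, the uniform Law of Large Numbers, and the algebra with Definition \ref{D:strongstable}) are routine.
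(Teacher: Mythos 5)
Your proposal follows essentially the same route as the paper's proof: CD-regularity of $x^*$ from Theorem \ref{t:continuous}(b), strong stability via \cite{PaSuSu03} and \cite{FaPa03}, uniform convergence of $\hat{\phi}_N$ to $\phi$ on $\X\cap X'$ from Theorem \ref{t:continuous}(a) and the uniform Law of Large Numbers, and then Definition \ref{D:strongstable} together with \eqref{eq:int} to obtain existence of $\hat{x}_N$ near $x^*$ and the error bound \eqref{eq:xphi}. In fact you are more explicit than the paper about the one delicate point --- that \eqref{eq:int} is needed to rule out spurious SAA solutions in $\bar{\X}_N\setminus\X$ and to make $\hat{\phi}_N$ an admissible perturbation of $\phi$ near $x^*$ --- which the paper's proof passes over in a single clause.
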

\begin{proof}
By Theorem \ref{t:continuous} (b), the SVI \eqref{eq:NCPPhi} at $x^*$ is CD-regular. By \cite[Theorem 3]{PaSuSu03} and \cite{FaPa03}, $x^*$ is a strong stable solution of the SVI \eqref{eq:NCPPhi}.  Note that by Theorem \ref{t:continuous} (a) and \cite[Theorem 7.48]{SDR}, we have
$$
 \sup_{x\in \X\cap X'}\|\hat{\phi}_N(x) - \phi(x)\|
$$ converges to $0$ uniformly.
Then by Definition \ref{D:strongstable} and \eqref{eq:int},
there exist two positive scalars ${\delta}$, $\epsilon$  such that for  $N$ sufficiently large, w.p.1
$$
\sup_{x\in \X\cap X'} \| \hat{\phi}_N(x) - \phi(x)  \| \leq \min\{\epsilon, \e/\delta\}
$$
and
$$
\|\hat{x}_N - x^*\| \leq {\delta} \sup_{x\in \X\cap X'} \| \hat{\phi}_N(x) - \phi(x)  \|,
$$
which implies $\hat{x}_N\in \X$.
\end{proof}

Note that Theorem \ref{t:convergenceVI} guarantees that $\R^{-1}(t)\leq {\delta}t$ and condition \eqref{eq:int} is discussed after Theorem \ref{t:expge}. Note also that replacing conditions (i) - (ii) and condition \eqref{eq:int} by supposing condition (iv) of Theorem \ref{t:continuous},  conclusion  \eqref{eq:xphi} also holds. Moreover, in this case,
by Theorem \ref{t:continuous}~(c) and Theorem \ref{t:existenceN},  $x^*$ and $\hat{x}_N$ are the  unique solutions of the SVI \eqref{eq:NCPPhi} and its SAA problem \eqref{eq:NCPPhiSAA} respectively.

Then we consider the exponential rate of convergence. Note that under Assumption \ref{A:strongly-monotoney}, for SAA problem of mixed two-stage SVI-NCP \eqref{geq-vi-1-saa}-\eqref{geq-vi-2-saa}, Assumptions {\rm \ref{ass-1}, \ref{A:CDxxi}}, {\rm \ref{A:lipschitz}} and condition (iii) in Theorem \ref{t:expge} hold. If we replace Assumption \ref{A:strongly-monotoney} by Assumption \ref{A:strongly-monotone} over Sol$^*\cap X'\times \Y$, we have the following theorem.

\begin{theorem} Let $X'\subset C$ be a convex compact subset such that $\B_\delta(x^*)\subset X'$.
Suppose  the conditions in Theorem \ref{t:convergenceVI}  and  Assumption \ref{A:moment} hold. Then  for any $\e>0$ there
exist  positive constants ${\delta}>0$ (independent
of $\e$), $\varrho=\varrho(\e)$ and  $\varsigma=\varsigma(\e)$, independent
of $N$,  such that
 \begin{equation}
\label{3a1}
\prob \left \{\sup_{x\in \X}\big \|
\hat{\phi}_N(x)-\phi(x)\big\|\geq\e\right \} \leq
\varrho(\e) e^{-N\varsigma (\e)},
\end{equation}
and
\begin{equation}
\label{3b1}
\prob \left \{
\|x_N-x^*\|\geq\e\right \} \leq
 \varrho(\e/{\delta}) e^{-N\varsigma (\e/{\delta})}.
\end{equation}
\end{theorem}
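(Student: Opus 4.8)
The plan is to obtain the result by combining the uniform large-deviations bound of Theorem~\ref{t:expge}(a), applied on the compact set $X'$ in place of $X$, with the strong-stability inequality that underlies Theorem~\ref{t:convergenceVI}.

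\emph{Step 1: the bound \eqref{3a1}.} Under the conditions of Theorem~\ref{t:convergenceVI}---which contain conditions (i)--(iii) of Theorem~\ref{t:continuous}, hence the hypotheses of Theorem~\ref{t:continuousA1} and Assumption~\ref{A:strongly-monotoney}---Theorem~\ref{t:continuousA1}(a) shows that $\hat{\Phi}(\cdot,\xi)$ is Lipschitz continuous on $\X\cap X'$ with a modulus dominated by $\sup_{x\in X'}\|A-BM^{-1}L\|$, which is integrable by \eqref{eq:lessinfty}. As noted just before Theorem~\ref{t:convergenceVI}, Assumptions~\ref{ass-1} and~\ref{ass-3} then hold, and Assumption~\ref{A:moment} is assumed. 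Since $X'$ is compact (so that $\X\cap X'$ is compact), \cite[Theorem~7.67]{SDR} applied to each coordinate $(\hat{\phi}_N)_i-\phi_i$ and summed over $i=1,\dots,n$ yields positive constants $\varrho(\e),\varsigma(\e)$, independent of $N$, such that
\begin{equation*}
\prob\Big\{\sup_{x\in\X\cap X'}\|\hat{\phi}_N(x)-\phi(x)\|\ge\e\Big\}\le\varrho(\e)\,e^{-N\varsigma(\e)} .
\end{equation*}
This is \eqref{3a1}, with $\X$ read as $\X\cap X'$; the argument is identical to that in the proof of Theorem~\ref{t:expge}(a).

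\emph{Step 2: strong stability.} The proof of Theorem~\ref{t:convergenceVI} establishes, via Theorem~\ref{t:continuous}(b) together with \cite[Theorem~3]{PaSuSu03} and \cite{FaPa03}, that $x^*$ is a strongly stable solution of the SVI~\eqref{eq:NCPPhi} in the sense of Definition~\ref{D:strongstable}. The key point is that the two scalars $\delta,\epsilon>0$ furnished by strong stability depend only on $x^*$, $\phi$ and the neighborhood $X'\supseteq\B_\delta(x^*)$; in particular they are independent of $\e$ and of $N$. Moreover the property is a \emph{deterministic} implication: for every realization with $\sup_{x\in\X\cap X'}\|\hat{\phi}_N(x)-\phi(x)\|\le\epsilon$, choosing $\tilde{\phi}=\hat{\phi}_N$ and $\bar{\phi}=\phi$ (so that $x'=x^*$) in Definition~\ref{D:strongstable} shows that the SAA problem~\eqref{eq:NCPPhiSAA} has a (locally unique) solution $x_N$ with $x_N\in\X$---here \eqref{eq:int} is precisely what excludes the spurious part $\bar{\X}_N\setminus\X$ of the SAA feasible set---and
\begin{equation*}
\|x_N-x^*\|\le\delta\sup_{x\in\X\cap X'}\|\hat{\phi}_N(x)-\phi(x)\| .
\end{equation*}

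\emph{Step 3: combination.} Fix $\e>0$ and set $\e':=\min\{\epsilon,\e/\delta\}$. Step~2 gives the event inclusion $\{\|x_N-x^*\|\ge\e\}\subseteq\{\sup_{x\in\X\cap X'}\|\hat{\phi}_N(x)-\phi(x)\|\ge\e'\}$, whence, by \eqref{3a1} with $\e$ replaced by $\e'$,
\begin{equation*}
\prob\{\|x_N-x^*\|\ge\e\}\le\varrho(\e')\,e^{-N\varsigma(\e')} .
\end{equation*}
For all sufficiently small $\e$ one has $\e'=\e/\delta$ and this is exactly \eqref{3b1}; the range of large $\e$ is handled by relabelling $\varrho,\varsigma$ through the fixed threshold $\epsilon$ and is of no real interest. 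The main obstacle is this passage in Step~2: Theorem~\ref{t:convergenceVI} is stated ``w.p.1 for $N$ large enough,'' and to get a tail estimate valid for \emph{every} $N$ one must read strong stability as a fixed, non-random implication triggered by the deterministic threshold $\epsilon$, check that the first-stage SAA iterate $x_N$ appearing in \eqref{3b1} is the solution singled out by Definition~\ref{D:strongstable}, and verify $x_N\in\X$, which is where \eqref{eq:int} compensates for the absence of relatively complete recourse.
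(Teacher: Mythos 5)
Your proposal is correct and follows essentially the same route as the paper: \eqref{3a1} is obtained from the uniform large-deviations bound of \cite[Theorem~7.67]{SDR} via Theorem~\ref{t:continuous}(a) and Assumption~\ref{A:moment}, and \eqref{3b1} then follows by combining the strong-stability estimate \eqref{eq:xphi} of Theorem~\ref{t:convergenceVI} with \eqref{3a1}. Your Step~2 in fact spells out more carefully than the paper does why the stability inequality can be read as a deterministic implication triggered by the event in \eqref{3a1}, which is a useful clarification but not a different argument.
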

\begin{proof}
By Theorem \ref{t:continuous} (a), Assumption \ref{A:moment} and \cite[Theorem 7.67]{SDR}, the conditions of Theorem \ref{t:expge} (a) hold and then  \eqref{3a1} holds.
Under condition \eqref{eq:int} in Theorem \ref{t:convergenceVI},
\eqref{3b1} follows from  \eqref{eq:xphi} and \eqref{3a1}.
\end{proof}

\section{Examples}
\label{sec-examp}
\setcounter{equation}{0}
In this section, we illustrate our theoretical results in the last sections by a two-stage stochastic
 non-cooperative game of two players \cite{ ChSuXu17, PSS17}. Let $\xi:\Omega\to\Xi\subseteq \bbr^{d}$ be a random vector,  $x_i\in \bbr^{n_i}$ and $y_i(\cdot)\in \Y_i$ be the strategy vectors and policies of the $i$th player at the first stage and second stage, respectively, where  $\Y_i$ is a measurable function space from $\Xi$ to $\bbr^{m_i}$, $i=1, 2$, $n=n_1+n_2$, $m=m_1+m_2$. In this two-stage stochastic game, the $i$th player solves the following  optimization problem:
\begin{equation}\label{eq:nex-1}
\min_{x_i\in [a_i, b_i]} \; \theta_i(x_i, x_{-i}) + \bbe[\psi_i(x_i, x_{-i}, y_{-i}(\xi), \xi)],
\end{equation}
where
$
\theta_i(x_i, x_{-i}) : = \frac{1}{2}x_i^T H_{i}x_i + q_i^Tx_i +  x_i^T P_{i}x_{-i},
$
\begin{equation}\label{eq:nex-2}
\psi_i(x_i, x_{-i}, y_{-i}(\xi), \xi) : = \min_{y_i\in [l_i(\xi), u_i(\xi)]} \; \phi_i(y_i,x_i, x_{-i}, y_{-i}(\xi), \xi)
\end{equation}
is the optimal value function of the recourse action $y_i$ at the second stage with
$$
\phi_i(y_i, x_i, x_{-i}, y_{-i}(\xi), \xi) = \frac{1}{2}y_i^\top Q_{i}(\xi)y_i + c_i(\xi)^\top y_i + \sum_{j=1}^2 y_i^\top S_{ij}(\xi)x_j  + y_i^\top O_{i}(\xi)y_{-i}(\xi),
$$
$a_i, b_i\in \bbr^{n_i}$, $l_i, u_i: \Xi\to \bbr^{m_i}$ are vector valued measurable functions, $l_i(\xi) < u_i(\xi)$ for all $\xi\in \Xi$,
 $H_i$ and $Q_i(\xi)$  are symmetric positive definite matrices for a.e $\xi\in\Xi$, $x=(x_1, x_2)$, $y(\cdot) = (y_1(\cdot), y_2(\cdot))$, $x_{-i}=x_{i'}$ and $y_{-i}=y_{i'}$,  for  $ {i'\neq i}$.
We use $y_i(\xi)$ to denote the unique solution of \eqref{eq:nex-2}.

By \cite[Theorem 5.3 and Corollary 5.4]{GaDu82}, $\psi_i(x_i, x_{-i}, y_{-i}(\xi), \xi)$ is continuously differentiable w.r.t. $x_i$ and
$$
\nabla_{x_i} \psi_i(x_i, x_{-i}, y_{-i}(\xi), \xi)
= S_{ii}^T(\xi)y_i(\xi).
$$
Hence the two-stage stochastic game can be formulated as a two-stage linear SVI
\begin{equation*}
\begin{array}{rcll}
-\nabla_{x_i} \theta_i(x_i, x_{-i}) - \bbe[\nabla_{x_i}\psi_i(x_i, x_{-i}, y_{-i}(\xi), \xi)] & \in & \N_{[a_i, b_i]}(x),  \\
-\nabla_{y_i(\xi)} \phi_i(y_i(\xi), x_i, x_{-i}, y_{-i}(\xi), \xi) & \in & \N_{[l_i(\xi), u_i(\xi)]}(y_i(\xi)),  \\
&&\inmat{for  a.e. } \xi\in \Xi,
\end{array}
\end{equation*}
for $i=1, 2,$  with the following matrix-vector form
\begin{equation}\label{eq:sboxvi}
\begin{array}{rcl}
-A x - \bbe[B(\xi)y(\xi)] - h_1& \in & \N_{[a, b]}(x)\\
-M(\xi) y(\xi) - L(\xi) x - h_2(\xi) & \in & \N_{[l(\xi), u(\xi)]}(y(\xi)), \;\; \inmat{ for  a.e. } \xi\in \Xi,
\end{array}
\end{equation}
where
$$
A =
\begin{pmatrix}
H_1 & P_{1} \\
P_{2} & H_2
\end{pmatrix}, \;\;
B(\xi) =
\begin{pmatrix}
S^T_{11}(\xi) & 0 \\
0 & S^T_{22}(\xi)
\end{pmatrix},
$$
$$
L(\xi) = \begin{pmatrix}
S_{11}(\xi) & S_{12}(\xi) \\
S_{21}(\xi) & S_{22}(\xi)
\end{pmatrix}, \;\;
M(\xi) = \begin{pmatrix}
Q_1(\xi) & O_{1}(\xi) \\
O_{2}(\xi) & Q_2(\xi) \end{pmatrix},
$$
$h_1 = (q_1, q_2)$ and $h_2(\xi) = (c_1(\xi),  c_2(\xi))$.  Moreover, if there exists a
positive continuous function $\kappa(\xi)$ such that $\bbe[\kappa(\xi)]<+\infty$ and for a.e. $\xi\in \Xi$,
\begin{equation}\label{eq:ABmatrix}
\begin{pmatrix}
z^\top, u^\top
\end{pmatrix}
\begin{pmatrix}
A & B(\xi)\\
L(\xi) & M(\xi)
\end{pmatrix}\begin{pmatrix}
z\\
u
\end{pmatrix} \geq \kappa(\xi)(\|z\|^2+\|u\|^2), \; \;\; \forall z\in\bbr^n, \; u\in\bbr^{m},
\end{equation}
the two-stage box constrained SVI \eqref{eq:sboxvi} satisfy Assumption \ref{A:strongly-monotone}. By the Schur complement condition for positive definiteness \cite{Horn}, a sufficient condition for \eqref{eq:ABmatrix} is
$$
4H_2 - (P_1+P_2^\top)H_1^{-1}(P_1+P_2^\top) \quad \mbox{ is positive definite}
$$
and for some $k_1>0$ and a.e. $\xi\in\Xi$,
$$
\lambda_{\rm min}(M(\xi)+M(\xi)^\top -(B(\xi)+L(\xi)^\top)(A+A^\top)^{-1}(B(\xi)+L(\xi)^\top)) \geq k_1 >0,
$$
where $\lambda_{\rm min}(V)$ is the smallest eigenvalue of   $V \in \bbr^{m\times m}$.

Under condition \eqref{eq:ABmatrix},   by Corollary \ref{c:Amatrix} and Theorem \ref{t:existenceunique}, the conditions in Theorem \ref{t:expge} hold for \eqref{eq:sboxvi}. To see this, we only need to show condition (vi) of Theorem \ref{t:expge} holds for \eqref{eq:sboxvi}. Consider the second stage VI of \eqref{eq:sboxvi} for fixed $\xi$ and $x$, by the proof of \cite[Lemma 2.1]{CW12}, we have
$$
\hat{y}(x,\xi) - \hat{y}(x',\xi) = -(I-D(x,x',\xi)+D(x,x',\xi)M(\xi))^{-1}D(x,x',\xi)L(\xi)(x-x'),
$$
which implies
\begin{equation}\label{eq:payex}
\partial_x \hat{y}(x,\xi) \subseteq \{-(I-D+DM(\xi))^{-1}DL(\xi): D\in \mathcal{D}_0\},
\end{equation}
where $D(x,x',\xi)$ is a diagonal matrix with diagonal elements
$$
d_i=\left\{
\begin{array}{lll}
0, \;\;\;  \inmat{ if } (\hat{y}(x,\xi))_i - z_i(x,\xi), (\hat{y}(x',\xi))_i - z_i(x',\xi) \in [u_i(\xi), \infty),\\
0, \;\;\;  \inmat{ if } (\hat{y}(x,\xi))_i - z_i(x,\xi), (\hat{y}(x',\xi))_i - z_i(x',\xi) \in (-\infty, l_i(\xi)],\\
1, \;\;\;  \inmat{ if } (\hat{y}(x,\xi))_i - z_i(x,\xi), (\hat{y}(x',\xi))_i - z_i(x',\xi) \in (l_i(\xi), u_i(\xi)),\\
\frac{(\hat{y}(x,\xi))_i - (\hat{y}(x',\xi))_i}{(\hat{y}(x,\xi))_i - z_i(x,\xi) - ((\hat{y}(x',\xi))_i - z_i(x',\xi)}, \;  \inmat{ otherwise},
\end{array}
\right.
$$
$z_i(x,\xi)=(M(\xi)\hat{y}(x,\xi)+L(\xi)x+h_2(\xi))_i$, $d_i\in[0,1]$, $i=1, \cdots, m$, $\mathcal{D}_0$ is a set of diagonal matrices in $\bbr^{m\times m}$ with the diagonal elements in $[0,1]$.
Then we consider the one stage SVI with $\hat{y}(x,\xi)$ as follows
\begin{equation}\label{eq:sboxvione}
-A x - \bbe[B(\xi)\hat{y}(x, \xi)] - h_1 \in  \N_{[a, b]}(x).
\end{equation}
By using the similar arguments as in the proof of Theorem \ref{t:continuous} and \eqref{eq:payex}, every elements of the Clarke Jacobian of $Ax+\bbe[B(\xi)\hat{y}(x, \xi)]+h_1$ is a positive definite matrix. Then \eqref{eq:sboxvione} is strong monotone and hence condition (vi) of Theorem \ref{t:expge} holds.
In what follows, we verify the convergence results in Theorem \ref{t:expge} numerically.

Let $\{\xi^j\}_{i=1}^N$ be an iid sample of random variable $\xi$. Then the SAA problem of \eqref{eq:sboxvi} is
\begin{equation}\label{eq:sboxvisaa}
\begin{array}{rcl}
-A x - \frac{1}{N}\sum_{j=1}^NB(\xi^j)y(\xi^j) - h_1& \in & \N_{[a, b]}(x)\\
-M(\xi^j) y(\xi^j) - L(\xi^j) x - h_2(\xi^j) & \in & \N_{[l(\xi^j), u(\xi^j)]}(y(\xi^j)), \;\;  j=1, \dots, N.
\end{array}
\end{equation}
PHM converges to a solution of  \eqref{eq:sboxvisaa} if condition (\ref{eq:ABmatrix}) holds.

\begin{alg} [PHM] Choose $r>0$ and  initial points  $x^0 \in \bbr^{n}$, $x^0_j=x^0\in\bbr^n$, $y_j^0\in\bbr^m$ and $w_j^0\in\bbr^n$, $j=1, \cdots, N$ such that $\frac{1}{N}\sum_{j=1}^Nw^0_j=0$.  Let $\nu=0$.

\textbf{Step 1.} For $j=1, \cdots, N$, solve the box constrained VI
\begin{equation}
\label{eq:slcp-two1N-PH}
\begin{array}{rcl}
 - Ax_j - B(\xi^j)y_j -  h_1 - w^{\nu}_{j} - r (x_j - x^{\nu}_j) & \in & \N_{[a,b]}(x_j),  \\
 - M(\xi^j)y_j - L(\xi^j)x_j  - h_2(\xi^j) -r (y_j - y^{\nu}_j) & \in & \N_{[l(\xi^j), u(\xi^j)]}(y_j),
\end{array}
\end{equation}
and obtain a solution $(\hat{x}^{\nu}_j, \hat{y}^{\nu}_j), $ $ j=1, \cdots, N$.

\textbf{Step 2.}
Let $\bar{x}^{\nu+1}= \frac{1}{N}\sum_{j=1}^N \hat{x}^{\nu}_j.$ For $j = 1, \cdots, N$, set
$$
 \;\;\,\,
x^{\nu+1}_j=\bar{x}^{\nu+1},\,\, \,
\;\; y^{\nu+1}_j =  \hat{y}^{\nu}_j, \,\,\,\,\, w_{j}^{\nu+1}  = w_{j}^{\nu} + r (\hat{x}^{\nu}_j - x^{\nu+1}_j). $$
\end{alg}
Note that PHM is well-defined if $\begin{pmatrix} A & B(\xi^j)\\ L(\xi^j) & M(\xi^j)\end{pmatrix}$, $j=1,\cdots, N$ are positive semidefinite, that is, \eqref{eq:slcp-two1N-PH} has a unique solution for each $j$, even for some $x$ and $\xi^j$ the second stage problem
$$
 - M(\xi^j)y - L(\xi^j)x  - h_2(\xi^j)  \in  \N_{[l(\xi^j), u(\xi^j)]}(y)
$$
has no solution.

\subsection{Generation of matrices satisfying condition (\ref{eq:ABmatrix})}
We generate matrices $A$, $B(\xi), L(\xi), M(\xi)$  by the following procedure.
Randomly generate a symmetric positive definite matrix $H_1\in \bbr^{n_1\times n_1}$,
matrices $P_1\in \bbr^{n_1\times n_2}, P_2\in \bbr^{n_2\times n_1}$. Set $H_2=\frac{1}{4}(P_1^\top+P_2)H_1^{-1}(P_1+P_2^\top) +\alpha I_{n_2}$,
where $\alpha$ is a positive number.
Randomly generate matrices with entries within $[-1,1]$:
$$\bar{S}_{11}\in \bbr^{m_1\times n_1}, \quad
\bar{S}_{12}\in \bbr^{m_1\times n_2}, \quad
\bar{S}_{21}\in \bbr^{m_2\times n_1},
$$
$$
\bar{S}_{22}\in \bbr^{m_2\times n_2}, \quad
\bar{O}_{1}\in \bbr^{m_1\times m_2}, \quad
\bar{O}_{2}\in \bbr^{m_2\times m_1}.$$
Randomly generate two symmetric matrices
$\bar{Q}_1\in \bbr^{m_1\times m_1}$ and $\bar{Q}_{2}\in \bbr^{m_2\times m_2}$ whose diagonal entries are greater than $m -1 +\alpha$,  
off-diagonal entries are in $[-1,1]$, respectively.

Generate  an iid sample  $\{\xi^j\}_{j=1}^N\subset [0,1]^{10}\times[-1,1]^{10} $ of random variable $\xi\in \bbr^{20}$ following uniformly distribution over $\Xi=[0,1]^{10}\times[-1,1]^{10}$. Set
$$ S_{11}(\xi)=\xi_1^j\bar{S}_{11}, \,
S_{12}(\xi)=\xi_2^j\bar{S}_{12}, \,
S_{21}(\xi)=\xi_3^j\bar{S}_{21},
$$
$$
S_{22}(\xi)=\xi_4^j\bar{S}_{22},\, O_{1}(\xi)=\xi_5^j\bar{O}_{1}, \, O_{2}(\xi)=\xi_6^j\bar{O}_{2},$$$$ Q_1(\xi)=\bar{Q}_1 + (\xi_7^j +\frac{(n+m)^2}{\lambda_{\rm min}(A+A^T)})I_{m_1} \quad   Q_2(\xi)=\bar{Q}_2+ (\xi_8^j +\frac{(n+m)^2}{\lambda_{\rm min}(A+A^T)} )I_{m_2}.$$
Set $B(\xi^j), L(\xi^j), M(\xi^j)$  as in (\ref{eq:sboxvi}).

The matrices generated by this procedure satisfy condition (\ref{eq:ABmatrix}). Indeed,
since $H_1$ and $
4H_2 - (P_1+P_2^T)H_1^{-1}(P_1+P_2^T)$ are  positive definite,
  by the Schur complement condition for positive definiteness \cite{Horn},  $A+A^T$ is symmetric positive definite, and thus $A$ is positive definite. Moreover, since
   the matrix
$
\bar{M}:=\begin{pmatrix}
\bar{Q}_1 & \bar O_1\\
\bar O_2 &  \bar Q_2
\end{pmatrix}
$
is diagonal dominance with positive diagonal entries $\bar{M}_{ii}\ge m -1 +\alpha$, it is positive definite and the eigenvalues $M+M^T$ are greater than $2\alpha$. Hence,  for any
$y \in \bbr^{m}$, we have
\begin{eqnarray*}
&&y^T(M(\xi)+M(\xi)^T -(B(\xi)^T+L(\xi))(A+A^T)^{-1}(B(\xi)+L(\xi)^T))y\\
&\ge& (2\alpha+\frac{(n+m)^2}{\lambda_{\rm min}(A+A^T)})\|y\|^2- \frac{1}{\lambda_{\rm min}(A+A^T)}\|(B(\xi)^T+L(\xi))\|^2\|y\|^2 \ge 2\alpha \|y\|^2,
\end{eqnarray*}
where we use $\|B(\xi)^T+L(\xi)\|^2\le \|B(\xi)^T+L(\xi)\|^2_1\le (m+n)^2$. Using the Schur complement condition for positive definiteness \cite{Horn} again, we obtain condition (\ref{eq:ABmatrix}).

Finally, we generate the box constraints, $h_1$ and $h_2(\cdot)$. For the first stage,
the lower bound is set as $a=0 {\bf 1}_n,$ and the upper bound of the box constraints  $b$ is randomly generated  from $[1, 50]^6$. For the second stage, we set $l(\xi)=(1+\xi_9)\bar{l}$ and $u(\xi)=(1+\xi_{10})\bar{u}$, where ${\bf 1}_n \in \bbr^n$ is a vector with all elements 1,  $\bar{l}$ is randomly generated  from $[0,1]^{10}$ and  $\bar{u}$ is randomly generated from $[3,50]^{10}$. Moreover, the vector $h_1$ is randomly generated  from $[-5,5]^6$ and $h_2(\xi)=(\xi_{11}, \cdots, \xi_{20})$ is a random vector following uniform distribution over $[-1,1]^{10}$.

\subsection{Numerical results}

 For each sample size of $N=10, 50, 250, 1250, 2250$, we randomly generate   $20$ test problems  and solve the box-constrained VI in Step 1 of PHM by the homotopy-smoothing method \cite{ChenYe1999}. We stop the iteration when
 \begin{equation}\label{eq:res}
 {\bf res}:= \|x- {\rm mid}(x-Ax- \frac{1}{N}\sum^N_{j=1}B(\xi^j)\hat{y}(x,\xi^j) -h_1, a, b)\|\le 10^{-5},
 \end{equation}
 or the iterations reach 5000, where mid$(\cdot)$ denotes the componentwise median operator, $\hat{y}(x,\xi^j)$ is the solution of the second stage box constrained VI with  $x$ and~$\xi^j$.

 Parameters for the numerical tests are chosen as follows:  $n_1=n_2=3, m_1=m_2=5, \alpha=1$ and maximize iteration number is $5000$.

 Figures 1-6 show the convergence tendency of $x_1$, $x_2$, $x_3$, $x_4$, $x_5$ and $x_6$ respectively. Note that since we use the homotopy-smoothing method to solve the box-constrained VI in Step 1 of PHM and the stop criterion is $10^{-5}$,  $x_2$ is not always feasible. However, $[a_i-x_i]_++[x_i-b_i]_+\leq 10^{-5}$, $i=1, \dots, 6$, which is related to the stopping  criterion of the homotopy-smoothing method.

\begin{figure}[!htb]
\begin{minipage}[t]{0.5\textwidth}
\centering
\includegraphics[width=2.5in, height=1.6in]{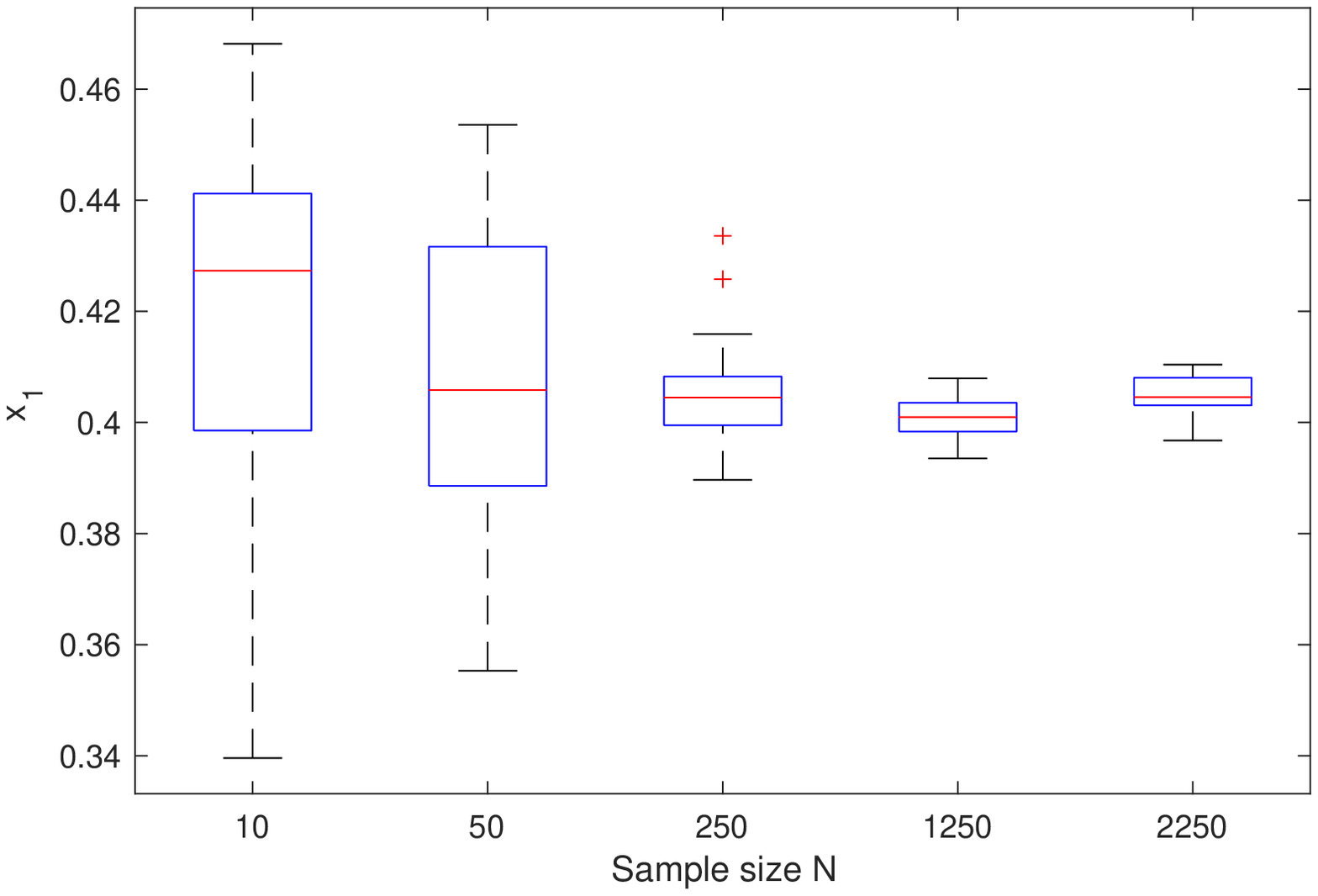}
\caption{Convergence of $x_1$}
\label{fig:side:a}
\end{minipage}%
\begin{minipage}[t]{0.5\textwidth}
\centering
\includegraphics[width=2.5in, height=1.6in]{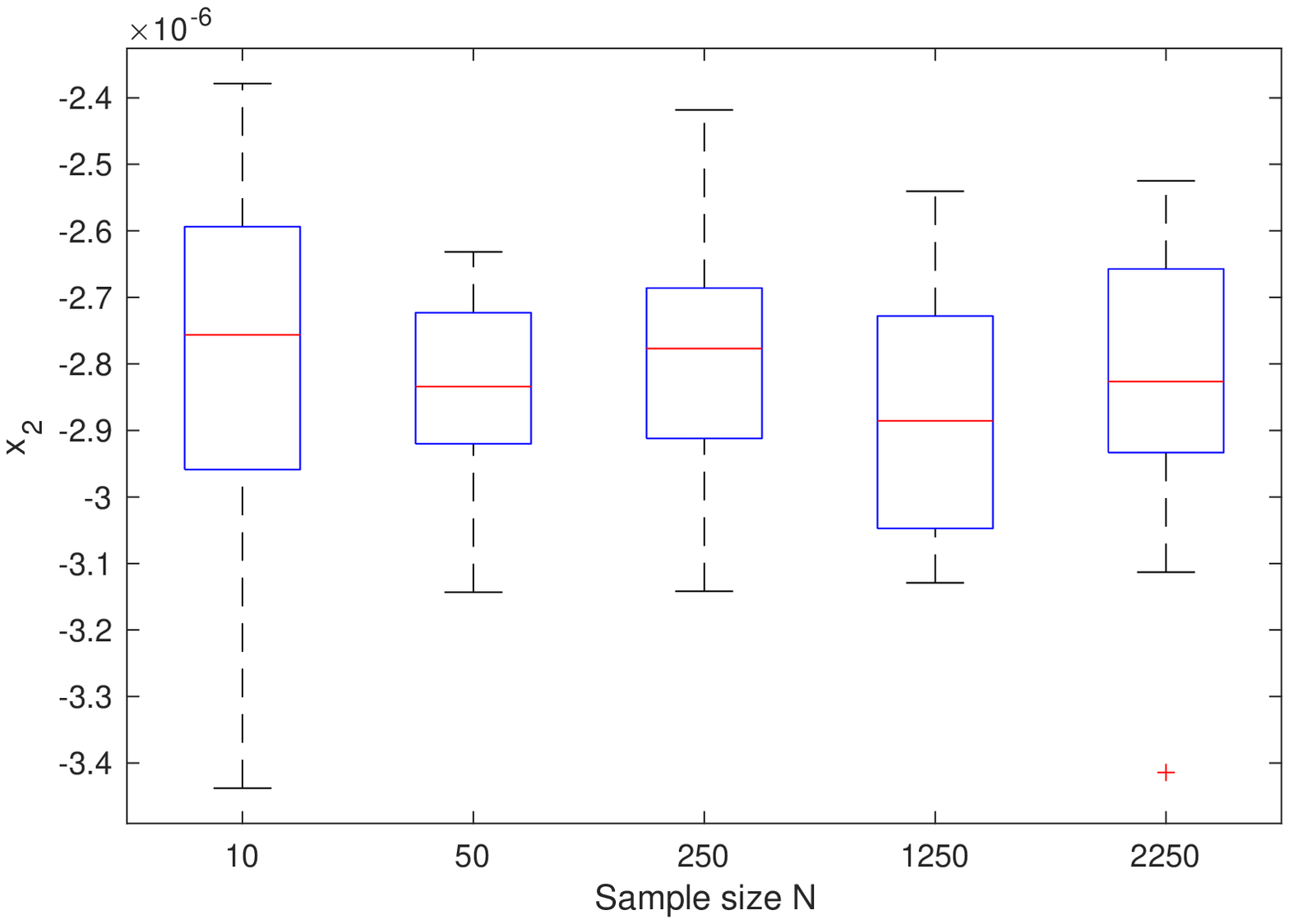}
\caption{Convergence of $x_2$}
\label{fig:side:b}
\end{minipage}
\end{figure}
\begin{figure}[!htb]
\begin{minipage}[t]{0.5\textwidth}
\centering
\includegraphics[width=2.5in, height=1.6in]{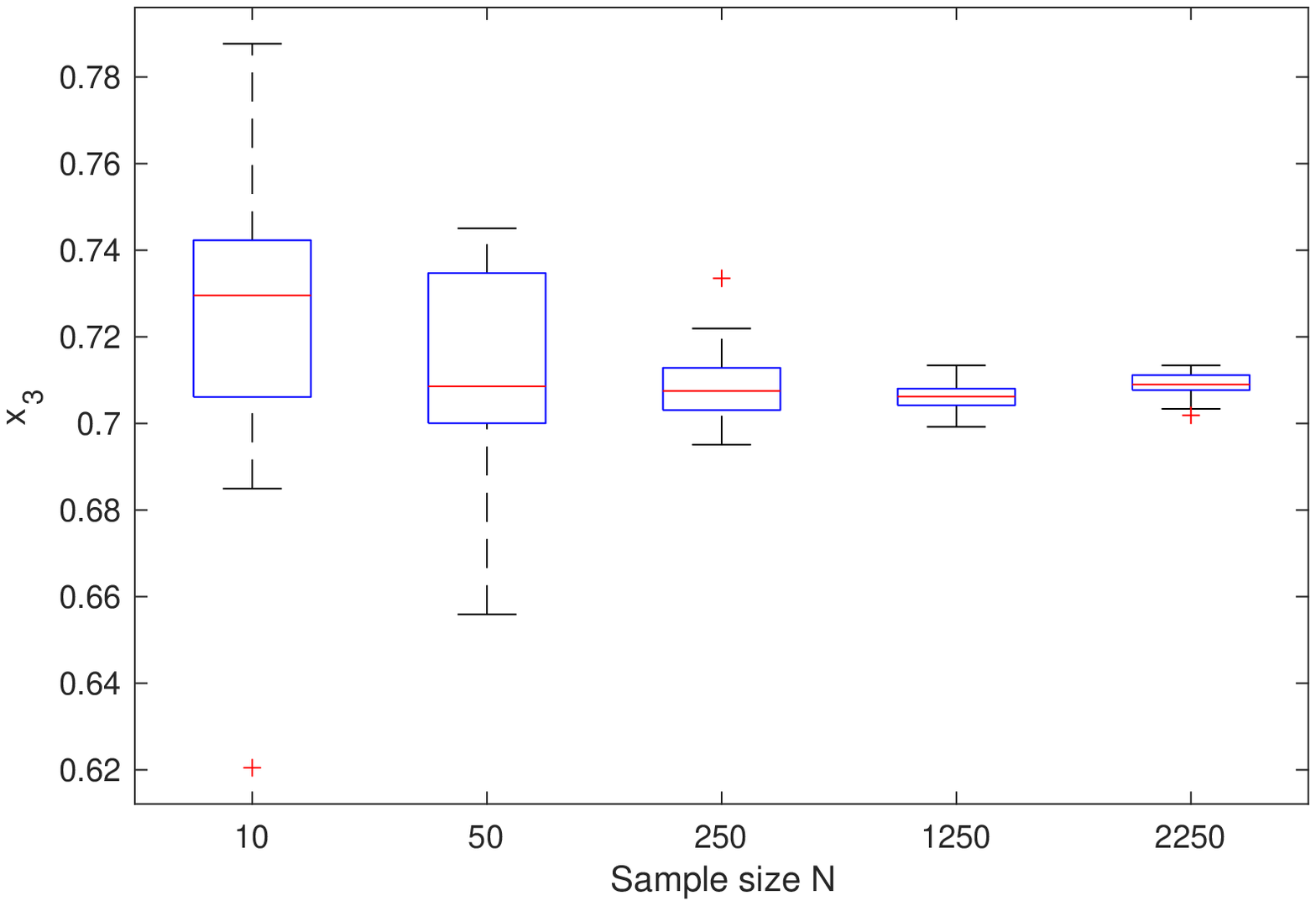}
\caption{Convergence of $x_3$}
\label{fig:side:c}
\end{minipage}%
\begin{minipage}[t]{0.5\textwidth}
\centering
\includegraphics[width=2.5in, height=1.6in]{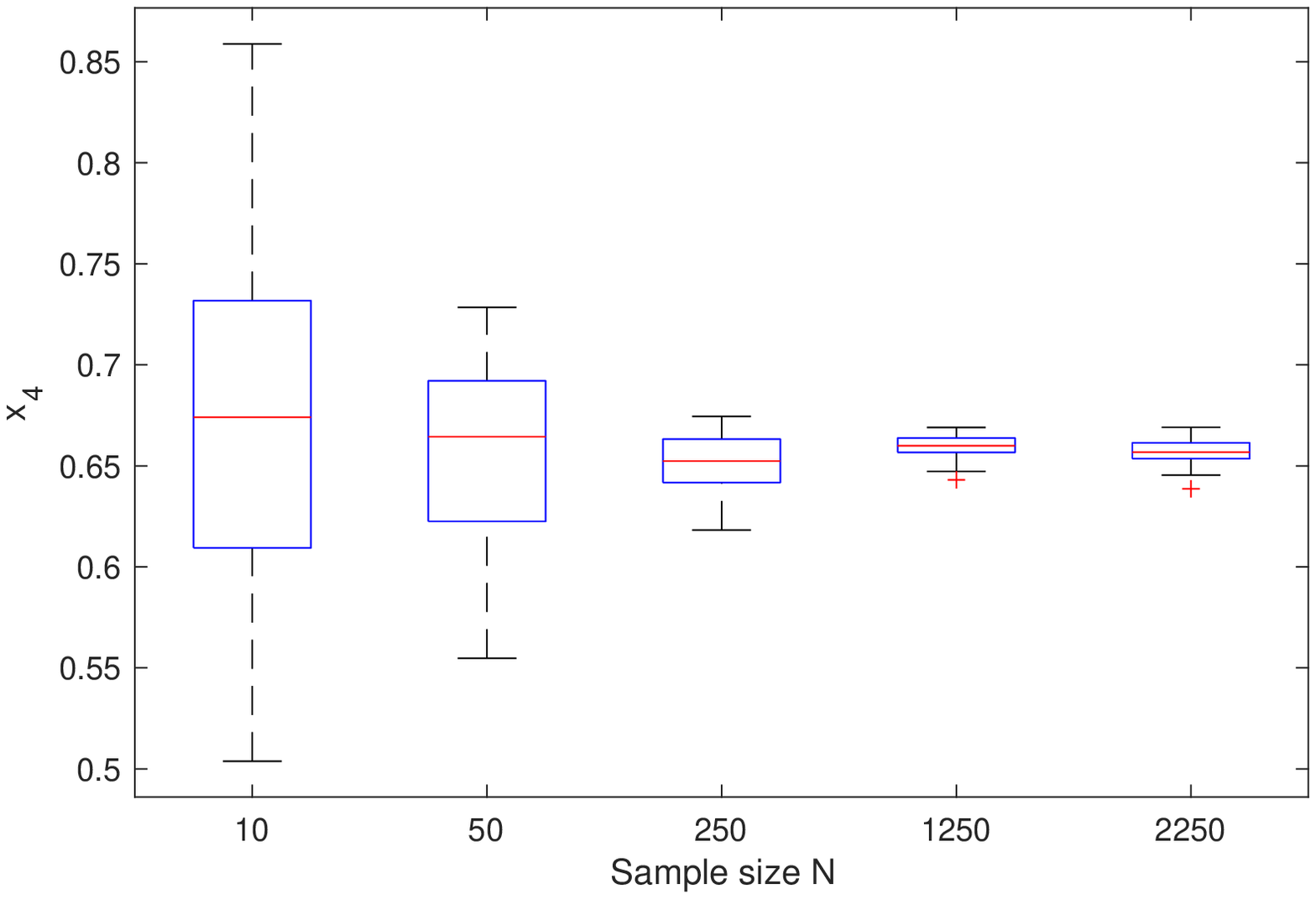}
\caption{Convergence of $x_4$}
\label{fig:side:d}
\end{minipage}
\end{figure}
\begin{figure}[!htb]
\begin{minipage}[t]{0.5\textwidth}
\centering
\includegraphics[width=2.5in, height=1.6in]{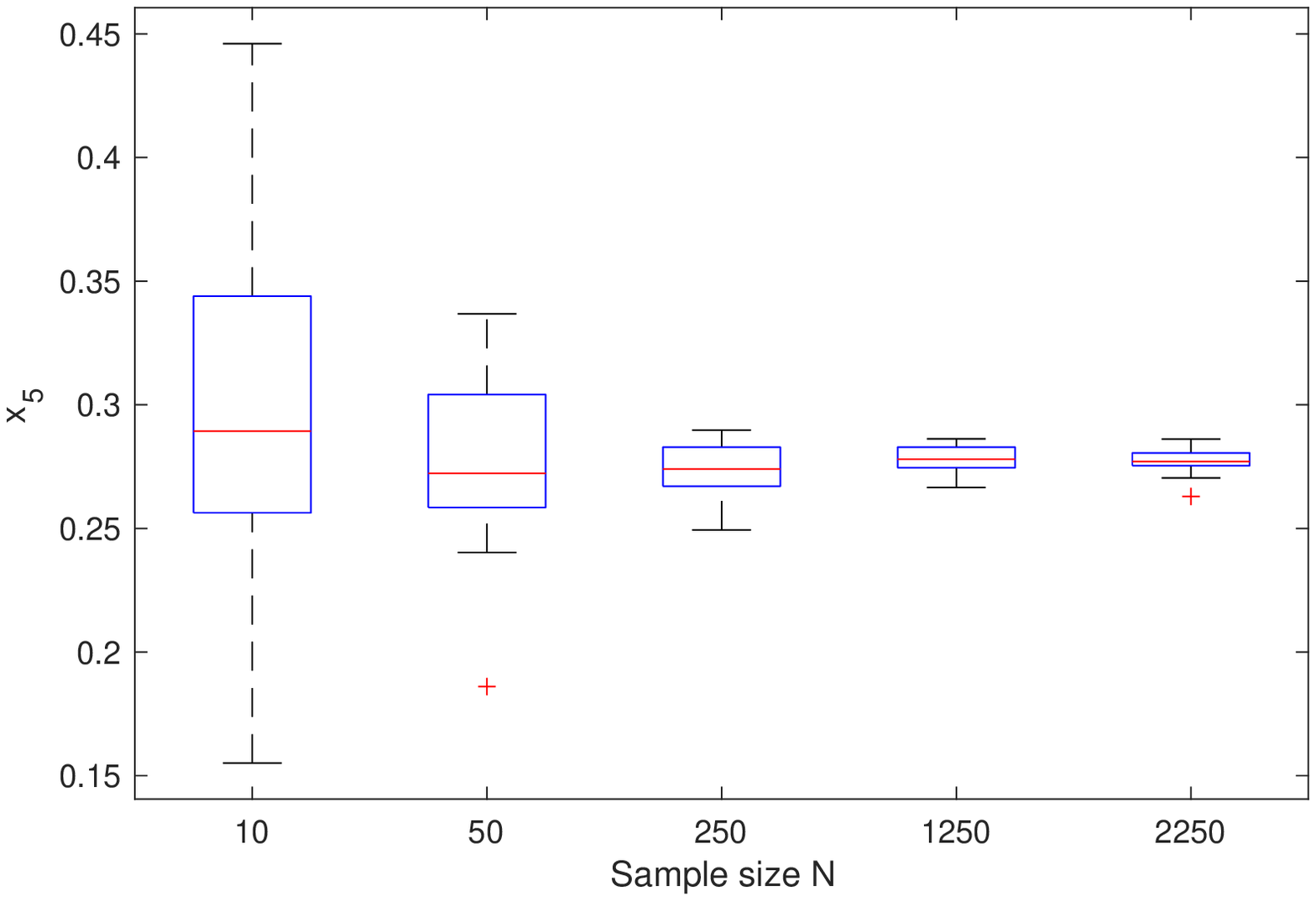}
\caption{Convergence of $x_5$}
\label{fig:side:e}
\end{minipage}%
\begin{minipage}[t]{0.5\textwidth}
\centering
\includegraphics[width=2.5in, height=1.6in]{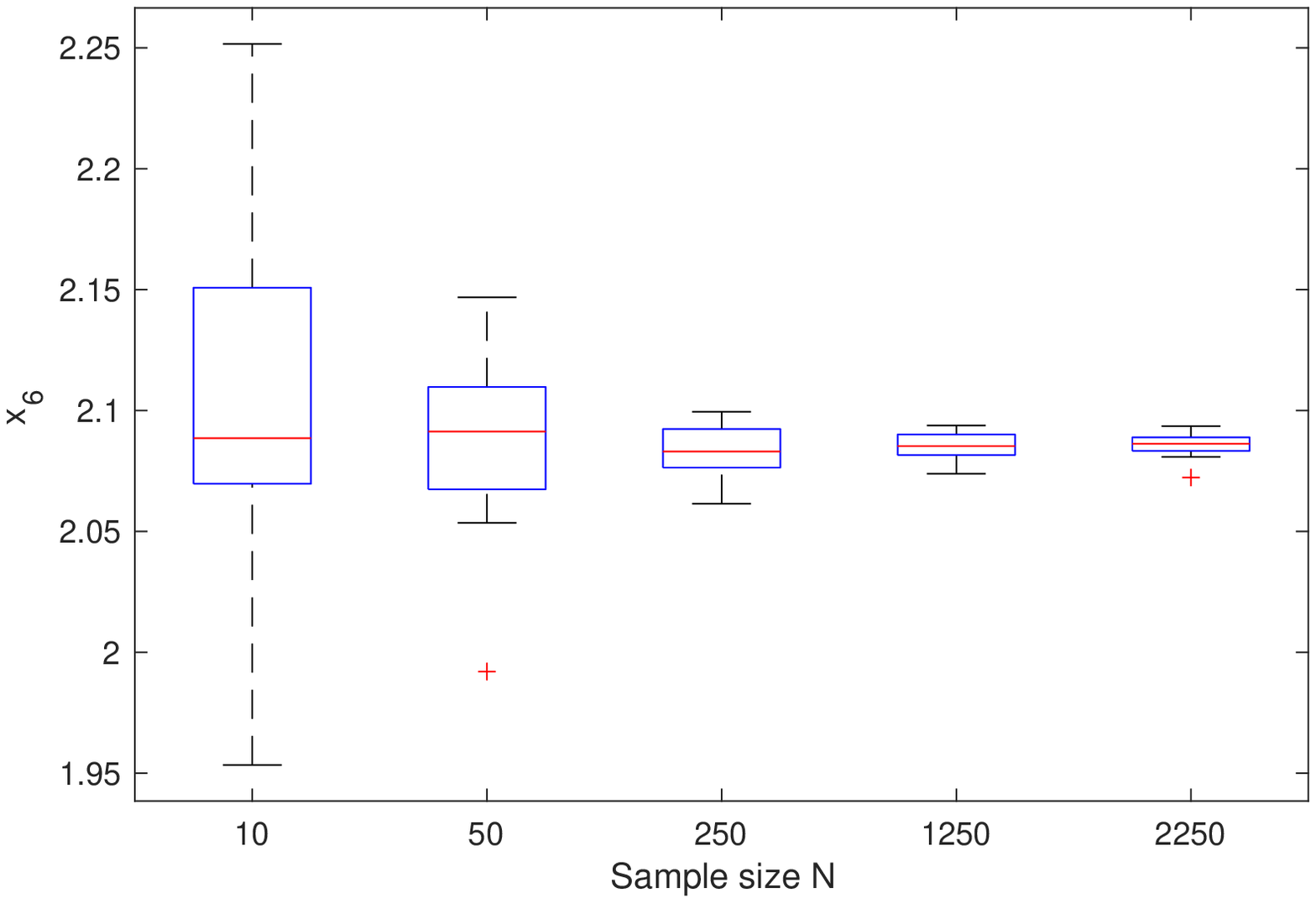}
\caption{Convergence of $x_6$}
\label{fig:side:f}
\end{minipage}
\end{figure}

  We 
  use $x^{N_t,j}$ $j=1,\ldots, 3000, t=1,\ldots, 5$  to denote the computed solutions with sample size $N_t$ for the $j$-th test problem shown in Figure 1.
  Then we computer the mean, variance and  $95\%$ confidence interval (CI) of the  corresponding {\bf res} defined in \eqref{eq:res} with $x=x^{N_t,j}$ by using a new set of 20 randomly generated test problems with sample size $N=3000$ for computing $\hat{y}(x^{N_t,j},\xi^j), j=1,\ldots, 3000, t=1,\ldots, 5$.  We can see that the average of the mean, variance and width of 95\% CI of {\bf res} in Table~1 decrease as the sample size increases.

\begin{table}[h]
\scriptsize
 \centering
\begin{tabular}{ c | c | c | c | c | c }
 \hline
 &  $N_1=10$ & $N_2=50$ & $N_3=250$ & $N_4=1250$ & $N_5=2250$ \\
 \hline
mean  & 0.22449  &  0.13753  &  0.04820  &  0.02885  &  0.02500 \\
  \hline
variance& 0.01984  &  0.00605  &  0.00118  &  0.00023  &  0.00016 \\
\hline
  95\% CI & [0.2158, 0.2332] & [0.1349, 0.1402] & [0.0477, 0.0487]  & [0.0287, 0.0290] & [0.0249, 0.0251] \\
\hline
\end{tabular}
\caption{ Mean, variance and 95\% confidence interval (CI) of {\bf res} }
\label{tab1}
\end{table}

\section{Conclusion remarks}
\label{sec-concl}
 Without assuming {\em relatively complete recourse}, we \linebreak
  prove the convergence of the SAA problem
(\ref{geq-7})-(\ref{geq-8}) of the two-stage SGE \eqref{geq-1}--\eqref{geq-2} in Theorem 2.4, and show the exponential rate of the convergence in Theorem 2.9. When the two-stage SGE \eqref{geq-1}--\eqref{geq-2} has  relatively complete recourse, Assumption~\ref{ass-3},  conditions (v)-(vi) in Theorem \ref{th-consist} and condition (iv) in Theorem \ref{t:expge} hold.

In section 3, we present  sufficient conditions for the existence, uniqueness, continuity and regularity of solutions of the two-stage  SVI-NCP \eqref{geq-SNCP-1}--\eqref{geq-SNCP-2} by using the perturbed linearization of functions $\Phi$ and $\Psi$ and then show the almost sure convergence and exponential convergence of its SAA problem \eqref{geq-vi-1-saa}-\eqref{geq-vi-2-saa}.  
Numerical examples in section 4 satisfy all conditions of Theorem \ref{t:expge} and we show the convergence of SAA method numerically.


\bibliographystyle{siamplain}
\bibliography{references}

\begin{thebibliography}{99}

\bibitem{Birge}
J. Birge and F. Louveaux, {\em  Introduction to Stochastic Programming}, Springer, 1997.
\bibitem{CPW}
X. Chen, T.K. Pong and R. B-J. Wets, Two-stage stochastic variational inequalities: an ERM-solution procedure,  {\em Math. Program.,}  165 (2017), pp. 71-111.

\bibitem{ChSuXu17} X. Chen, H. Sun and H. Xu, Discrete approximation of two-stage stochastic and distributionally robust linear complementarity problems, manuscripts (2017).

\bibitem{CWZ}
    X. Chen, R. B-J. Wets and Y. Zhang, Stochastic variational inequalities: residual minimization smoothing/sample average approximations, {\em SIAM J. Optim.,} 22 (2012), pp. 649-673.


\bibitem{ChenYe1999} X. Chen and Y. Ye, On homotopy-smoothing methods for box-constrained variational inequalities, {\em SIAM J. Control Optim.}, 37 (1999), pp. 589-616.

\bibitem{CW12} X. Chen and Z. Wang, Computational error bounds for a differential linear variational inequality, {\em IMA J. Numer. Anal.},  32 (2012), pp. 957-982.


\bibitem{DoRo96} A.L. Dontchev and R.T. Rockafellar, Characterizations of strong regularity for variational inequalities over polyhedral convex sets, {\em SIAM J. Optim.},  6 (1996), pp. 1087-1105.

\bibitem{DoRo10} A.L. Dontchev and R.T. Rockafellar, Newton's method for generalized equations: a sequential implicit function theorem, {\em Math. Program.,} 123 (2010), pp. 139-159.

\bibitem{FaPa03} F. Facchinei and J-S Pang, Finite-Dimensional Variational Inequalities and Complementarity Problems.
Springer-Verlag, New York, 2003.

\bibitem{GaDu82} J. Gauvin and F. Dubeau, Differential properties of the marginal function in mathematical programming, {\em Math. Program. Stud.,} 19 (1982), pp. 101-119.

\bibitem{HaKoSc05} N. Hadjisavvas, S. Koml\'{o}si and S. Schaible, {\em Handbook of Generalized Convexity and Generalized Monotonicity}, Springer, New York, 2005.

\bibitem{Horn}
R.A. Horn and C.R. Johnson, {\em Matrix Analysis}, Cambridge University Press, 21st printing, 2007.


\bibitem{Iz13} A.F. Izmailov, Strongly regular nonsmooth generalized equations, {\em Math. Program.,} 147 (2014), pp. 581-590.



\bibitem{Ky90} J. Kyparisis, Solution differentiability for variational inequalities, {\em Math. Program.,} 48 (1990), pp. 285-301.

\bibitem{LRX14} Y. Liu, W. R\"{o}emish and H. Xu, Quantitative stability analysis of  stochastic generalized equations,
{\em SIAM J. Optim.},  24 (2014), pp. 467-497.

\bibitem{PaSuSu03} J-S Pang, D. Sun and J. Sun, Semismooth homeomorphisms and strong stability of semidefinite and Lorentz cone complementarity problems, {\em Math.  Oper. Res.}, 28 (2003), pp. 39-63.

\bibitem{PSS17} J.-S. Pang, S. Sen and U. Shanbhag, Two-stage non-cooperative games with risk-averse players, {\em Math. Program.}, 165 (2017), pp. 235-290.

\bibitem{Ro80} S. M., Robinson, Strongly regular generalized equations, {\em Math. Oper. Res.},  5 (1980), pp. 43-62.

    \bibitem{RS}
R.T. Rockafellar and J. Sun, Solving monotone stochastic variational inequalities and complementarity problems by progressive hedging, submmited to Math. Progrm. (2017).

\bibitem{RWVA}
R.T. Rockafellar and R. B-J. Wets, {\em Variational Analysis}, Springer-Verlag, Berlin Heidelberg, 1998.

\bibitem{RW2017}
R.T. Rockafellar and R. B-J. Wets,
Stochastic variational inequalities: single-stage to multistage, {\em Math. Program.}, 165 (2017), pp. 331-360.


\bibitem{sha2003}
A. Shapiro, Sensitivity analysis of generalized equations, {\em J. Math. Sci.}, 115 (2003), pp. 2554-2565.

\bibitem{SDR}
A. Shapiro,  D. Dentcheva, and A. Ruszczy\'{n}ski,  {\em Lectures on Stochastic Programming: Modeling and Theory}, SIAM, Philadelphia, 2009.

\bibitem{shaxu} A. Shapiro and H. Xu, Stochastic mathematical programs with equilibrium constraints, modelling and sample average approximation, {\em Optim.}, 57 (2008), pp. 395-418.



\bibitem{Wa81} J. Warga,  Fat homeomorphisms and unbounded derivate containers, {\em J. Math. Anal. Appl.}  81 (1981), pp. 545-560.

\bibitem{Xu10A} H. Xu, Uniform exponential convergence of sample average random functions under general sampling with applications in stochastic programming, {\em J.  Math. Anal.  Appl.}, 368 (2010), pp. 692-710.









\end{thebibliography}
\end{document}